\newif \iffig
  \numberwithin{figure}{subsection}
\let\oldtocsection=\tocsection
\let\oldtocsubsection=\tocsubsection
\let\oldtocsubsubsection=\tocsubsubsection
\renewcommand{\tocsection}[2]{\hspace{0em}\oldtocsection{#1}{#2}}
\renewcommand{\tocsubsection}[2]{\hspace{1.75em}\oldtocsubsection{#1}{#2}}
\renewcommand{\tocsubsubsection}[2]{\hspace{2em}\oldtocsubsubsection{#1}{#2}}
\renewcommand{\@secnumfont}{\bfseries}
\renewcommand{\section}{
\@startsection {section}
				   {1}
				   {\z@}%
                                   {-3.5ex \@plus -1ex \@minus -.2ex}%
                                   {2.3ex \@plus.2ex}%
                                   {\centering \normalfont \Large \bfseries \sffamily}}%
\renewcommand{\subsection}
{\@startsection{subsection}
				     {2}
				     {\z@}%
                                     {-3.25ex\@plus -1ex \@minus -.2ex}%
                                     {1.5ex \@plus .2ex}%
                                     {\normalfont \large \bfseries \sffamily}}
\renewcommand{\subsubsection}
{\@startsection{subsubsection}
				     {3}
				     {\z@}%
                                     {3.25ex \@plus1ex \@minus.2ex}%
				     {-\fontdimen 2\font }
                                     {\normalfont\normalsize\bfseries}}
\renewcommand{\paragraph}{%
\@startsection {paragraph}
				    {4}
				    {\z@}
				    {\z@}
				    {-\fontdimen 2\font }
				    {\normalfont\normalsize\bfseries}
}
\newif\ifdebug
\definecolor{darkred}{rgb}{0.4,0,0}
\definecolor{darkgreen}{rgb}{0,0.5,0}
\definecolor{darkblue}{rgb}{0,0,0.4}
\def\subsek~{\S{}}
\def\subsubsek~{\S{}}
\def\equationautorefname~#1\null{%
  Eqn.~(#1)\null
}
\newtheoremstyle{mytheoremstyle}
    {5pt}	                
    {5pt}                    	
    {\itshape}                  
    {}                          
    {\bfseries}                 
    {.}                          
    {.5em}                      
    {}  			
\newtheoremstyle{mydefinitionstyle}
    {5pt}	                
    {5pt}                    	
    {}                  	
    {}                          
    {\bfseries}                 
    {.}                          
    {.5em}                      
    {}  			
\theoremstyle{mytheoremstyle}			
\newtheorem{theoremmain}{Theorem}
\newtheorem{theorem}{Theorem}[subsection]	
\numberwithin{equation}{subsection}		
\let\c@theorem \c@equation
    \let\c@subsubsection\c@equation
    \let\c@figure\c@equation
\newaliascnt{lemma}{theorem}
\newtheorem{lemma}[lemma]{Lemma}
\newaliascnt{proposition}{theorem}
\newtheorem{proposition}[proposition]{Proposition}
\newaliascnt{corollary}{theorem}
\theoremstyle{mydefinitionstyle}
\newaliascnt{exercise}{theorem}
\newaliascnt{definition}{theorem}
\newtheorem{definition}[definition]{Definition}
\newaliascnt{remark}{theorem}
\newtheorem{remark}[remark]{Remark}
\newaliascnt{example}{theorem}
\newaliascnt{question}{theorem}
\@date \else {\vskip3ex \centering\footnotesize\@date\par\vskip1ex}\fi
\else \@footnotetext{\@setdate}\fi}
\newcommand{\Lip}{\operatorname{Lip}}
\DeclareMathOperator{\inj}{inj}
\begin{document}

%
\title[Counting sLag fibrations in twistor families of K3s]{Counting special Lagrangian fibrations in twistor families of K3 surfaces}
%
\thanks{
\noindent Revised \textsc{\today}.\\
MSC subject classification: primary 14J28; secondary 22F30, 53C26.\\
Keywords: K3 surfaces, Lagrangian Fibrations, Equidistribution\\
Mots cl\'es: Surfaces K3, Fibrations Lagrangiennes, \'{E}quidistribution}

\date{
with an Appendix by \textsc{Nicolas Bergeron} and \textsc{Carlos Matheus}\\
at \href{https://arxiv.org/abs/1703.01746}{arXiv:1703.01746} \\
\quad \\
September 2016}

\author{
Simion Filip
}
\address{
\parbox{\textwidth}{
  Simion Filip\\
  School of Mathematics, Institute for Advanced Study\\
  1 Einstein Drive, Princeton, NJ 08540, USA}
  }
\email{{sfilip@math.ias.edu}}

%
%
%
\begin{abstract}
The number of closed billiard trajectories in a rational-angled polygon grows quadratically in the length.
This paper gives an analogue on K3 surfaces, by considering special Lagrangian tori.
The analogue of the angle of a billiard trajectory is a point on a twistor sphere, and the number of directions admitting a special Lagrangian torus fibration with volume bounded by $ V $ grows like $ V^{20} $ with a power-saving term.
Bergeron--Matheus have explicitly estimated the exponent of the error term as $ {20-\frac{4}{697633} }$.
The counting result on K3 surfaces is deduced from a count of primitive isotropic vectors in indefinite lattices, which is in turn deduced from equidistribution results in homogeneous dynamics.

\noindent \hrulefill

\begin{center}
	\textbf{Comptage des fibrations en Lagrangiens sp\'eciaux dans les familles de twisteur des surfaces K3}
\end{center}

Le nombre de trajectoires ferm\'{e}es des billards dans un polygone \`{a} angles rationnels a une croissance quadratique comme fonction de la longuer.
Cet article donne un analogue sur les surfaces K3, en consid\'{e}rant des tores Lagrangiens sp\'{e}ciaux.
L'analogue de l'angle d'une trajectoire de billard est un point sur une sph\`{e}re de twisteur, et le nombre de directions admettant une fibration en lagrangiens sp\'{e}ciaux avec un volume born\'{e} par $ V $ croit comme $ V ^ {20} $ avec un terme d'erreur.
Bergeron--Matheus ont explicitement estim\'{e} l'exposant du terme d'erreur \`{a} $ {20- \frac {4} {697633}} $.
Le comptage sur les surfaces K3 est d\'eduit \`a partir d'un comptage de vecteurs isotropes primitifs dans des r\'eseaux ind\'efinis, qui est \`a sont tour d\'eduit des r\'esultats d'\'equidistribution en dynamique homog\`ene.
\end{abstract}

%
%
\maketitle
%
{\tableofcontents}

\ifdebug
  \listoffixmes
\fi

\section{Introduction}

\subsection{Motivations}

\paragraph{Billiards}
Consider a regular $ n $-gon and billiard trajectories in it.
Veech \cite{Veech_Eisenstein} proved that the number of \emph{closed} billiard trajectories of length at most $ L $ is asymptotic to $ c_n L^2 $ for an explicit constant $ c_n $.
For general rational-angled polygons Masur \cite{Masur_lower} proved that the number of closed trajectories has quadratic upper and lower bounds, and results of Eskin, Mirzakhani, and Mohammadi \cite{EMM} imply a quadratic asymptotic in an averaged sense.
For a general polygon, it is not known if a single closed trajectory exists.

\paragraph{Translation surfaces}
These results are proved by studying the moduli space of ``flat'' or ``translation'' surfaces, i.e. Riemann surfaces $ X $ equipped with a holomorphic $ 1 $-form $ \Omega $.
These carry a flat metric $ \frac{\sqrt{-1}}{2}\Omega\wedge \conj{\Omega} $ with singularities at the zeros of $ \Omega $.
To go from a rational-angled polygon to a surface, the polygon is unfolded by reflections in the sides to finitely many copies in $ \bR^2\cong \bC $ and the sides are glued by translations, so that the $ 1 $-form $ \Omega:=dz $ on $ \bC $ descends to the glued surface.

\paragraph{Straight lines}
Of course, closed geodesics are just length-minimizing curves.
But on a translation surface $ (X,\Omega) $ a horizontal curve can also be described as one for which the imaginary part $ \Im \Omega $ restricts to zero.
Similarly, straight lines at angle $ \theta $ correspond to those for which $ \Im\left(e^{\sqrt{-1}\theta}\Omega\right) $ restricts to zero.

Another feature of closed geodesics on a translation surface is that they occur in families: a small parallel deformation will again close up.
Moreover, closed geodesics occur in a dense (on the unit circle) set of directions.

\paragraph{K3 surfaces}
We consider an extension of the above constructions to K3 surfaces -- compact complex $ 2 $-dimensional manifolds which admit a nowhere vanishing holomorphic $ 2 $-form $ \Omega $ and are simply connected.
By Yau's solution of the Calabi conjecture \cite{Yau_Ricci} on a K3 any \Kahler cohomology class has a unique Ricci-flat \Kahler representative $ \omega $.

\paragraph{Special Lagrangians}
One generalization of straight lines on a translation surface to higher dimensions are special Lagrangian manifolds.
Namely, on a complex $ n $-manifold $ X $ with a holomorphic $ n $-form $ \Omega $ and Ricci-flat \Kahler metric $ \omega $, a submanifold $ L\subset X $ is \emph{special Lagrangian} if:
\begin{enumerate}
	\item $ L $ is Lagrangian for the symplectic form $ \omega $ (i.e. $ \dim_{\bR} L =n $ and $ \omega\vert_L \equiv 0 $).
	\item The restriction $ \Omega\vert_L $ gives the Riemannian volume form on $ L $ (in particular $ \Im \Omega\vert_L\equiv 0 $).
\end{enumerate}
Note that special Lagrangians are automatically minimal surfaces, i.e. locally volume minimizing (because they are calibrated manifolds).

\paragraph{Fibrations}
We'll regard special Lagrangian tori as an analogue on K3 surfaces of closed billiard trajectories.
When a single such torus exists on a K3 surface $ X $, it deforms to give a special Lagrangian fibration $ X \to B $ with finitely many singular fibers.
Fibrations $X\to B$ will always be assumed to have connected fibers and two fibrations will be regarded as equivalent if there is a map on the bases $B\to B'$ such that the corresponding diagram commutes.

\paragraph{Twistor families}
A Ricci-flat K3 surface $ (X,\Omega,\omega) $ naturally sits in a family $ \cX\to \bS^2 $ in which all the fibers are isometric as Riemannian manifolds, but the complex structure changes.
The points on the base $ \bS^2 $ of such a twistor family will serve as directions in which we will look for special Lagrangian fibrations.

\paragraph{Special Lagrangian $ \leftrightarrow $ holomorphic correspondence}
If a fiber $ (\cX_{t_0}, \Omega_{t_0}, \omega_{t_0}) $ of a twistor family admits a special Lagrangian fibration for some $ t_0\in \bS^2 $, then the same fibration is special Lagrangian for $ t $ in an entire equator containing $ t_0 $.
At the poles corresponding to that equator, the same special Lagrangian fibration becomes an \emph{elliptic} fibration, i.e. the fibers are complex elliptic curves for the corresponding complex structure (see \autoref{prop:hol_slag_corresp}).
In both types of fibrations the Riemannian volumes of each fiber agree and give a notion of volume of a fibration.

\subsection{Results}
\label{ssec:results}

The geometry of a K3 surface is directly related to the Hodge structure on its middle cohomology group.
Recall that the integral cohomology of a K3 surface equipped with cup product is isomorphic to the even unimodular lattice $\bI_{3,19}$ of signature $(3,19)$.
This lattice is unique up to isomorphism and will be denoted by $\Lambda_\bZ$ and extensions of scalars by a ring $R$ will be denoted $\Lambda_R$.

The information required to define a twistor family is encoded in a positive-definite $3$-plane $P\subset \Lambda_\bR$ which defines a semi-norm
\begin{align*}
  \norm{e}_P := \sup_{\substack{\kappa \in P ,\, \kappa^2=1}} \kappa \cdot e .
\end{align*}
Moreover, using the decomposition $\Lambda_\bR = P\oplus P^\perp $ we get a decomposition $e=e_P\oplus e_{P^\perp}$ and we clearly have $e_P\cdot e_P = \norm{e}_P^2$.

In order to perform a count of special Lagrangian fibrations we first establish the following counting result in the homogeneous setting (see \autoref{thm:homogeneous_counting} for a precise statement).

\begin{theoremmain}
  \label{thm:main_homogeneous}
  The number $N(V)$ of primitive integral isotropic\footnote{That is: $e\in \Lambda_\bZ$, $e^2=0$, and $e\neq k\cdot e'$ with $k\in \bN$, $e'\in \Lambda_\bZ$} vectors $e$ with $\norm{e}_P\leq V$ satisfies:
  \[
    N(V) = C\cdot V^{20} + O(V^{20-\delta})
  \]
  for a universal constant $C$, independent of the plane $P$.

  Moreover, the unit vectors $\frac 1 {\norm{e}_P} e_P$ quantitatively equidistribute on the unit sphere of $P$.

  \noindent \textbf{Bergeron--Matheus} \cite{Bergeron_Matheus} The above asymptotic holds for any constant $ \delta $ less than $ \frac   4{697633} \approx 6\cdot 10^{-6} $.
\end{theoremmain}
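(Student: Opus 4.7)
The approach is to recast the problem as an orbit-counting question for $\Gamma := \mathrm{SO}(\Lambda_\bZ)$ acting on the isotropic cone in $\Lambda_\bR$, and to apply homogeneous-dynamical counting techniques. Set $G := \mathrm{SO}(\Lambda_\bR) \simeq \mathrm{SO}(3,19)$. The first step is to fix one primitive isotropic $v_0 \in \Lambda_\bZ$ and verify, using Eichler's theorem for the even unimodular indefinite lattice $\bI_{3,19}$, that the set of primitive integral isotropic vectors is a single $\Gamma$-orbit $\Gamma\cdot v_0$. The stabilizer $H := \mathrm{Stab}_G(v_0)$ is a parabolic subgroup whose Levi quotient (modulo compact factors) is $\mathrm{SO}(2,18)\times \bR$, and the $G$-orbit of $v_0$ is the whole isotropic cone in $\Lambda_\bR$.

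\textbf{Main term.} The unique (up to scalar) $G$-invariant measure on the isotropic cone $G/H$ is homogeneous of degree $\dim \Lambda_\bR - 2 = 20$ under scaling, so the region $B_V := \{v \in G/H : \norm{v}_P \leq V\}$ has invariant volume $c\cdot V^{20}$. Because $G$ acts transitively on positive $3$-planes and the norm satisfies $\norm{gv}_{gP} = \norm{v}_P$, the constant $c$ is independent of $P$. By the Duke--Rudnick--Sarnak / Eskin--McMullen / Eskin--Oh framework for orbit-counting with parabolic stabilizers, once one shows that the translates of the closed orbit $\Gamma_H \backslash H$ equidistribute in $\Gamma \backslash G$ as one pushes $v_0$ outward, the count of all (not necessarily primitive) integral isotropic vectors in $B_V$ is asymptotic to $C' V^{20}$ with $C'$ an explicit ratio of the volumes $\mathrm{vol}(\Gamma \backslash G)$ and $\mathrm{vol}(\Gamma_H \backslash H)$. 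A wavefront estimate in the sense of Eskin--McMullen controls the non-compactness of $H$; Möbius inversion recovers the primitive count and gives the universal $C$.

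\textbf{Error term and spherical equidistribution.} The power saving $O(V^{20-\delta})$ requires an \emph{effective} version of the equidistribution, which is equivalent to effective decay of matrix coefficients for the regular representation of $G$ on $L^2_0(\Gamma \backslash G)$ along the Cartan direction pushing $v_0$ to infinity. Effective decay follows from a uniform spectral gap for $\mathrm{SO}(3,19)$ on congruence quotients, available by Burger--Sarnak restriction to $\mathrm{SO}(2,1)$-subgroups and the best known bounds toward the Ramanujan conjecture for $\mathrm{GL}_2$. Equidistribution of the unit projections $e_P/\norm{e_P}_P$ on $\bS^2 \subset P$ is then deduced by running the same count against smooth test functions on the sphere; the $P$-independence of $C$ forces the limiting measure to be $\mathrm{SO}(3)$-invariant on $\bS^2$, hence the round measure. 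The hardest step, and the main obstacle, is the explicit exponent: balancing (i) a numerical spectral-gap bound, (ii) the Sobolev-norm loss from smoothing the non-smooth boundary of $B_V$ at the light cone, and (iii) a uniform-in-cusp rate for the equidistribution in $\Gamma \backslash G$ yields the explicit $\delta < 4/697633$ stated in the Bergeron--Matheus appendix, while the qualitative main term and equidistribution fall out of the Eskin--McMullen machinery almost directly.
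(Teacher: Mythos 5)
Your proposal follows essentially the same route as the paper: primitive integral isotropic vectors form a single $\Gamma$-orbit (Eichler, or the direct hyperbolic-plane argument), and the count with power saving is obtained from effective equidistribution of translates of the stabilizer orbit in $\Gamma\backslash G$ in the Eskin--McMullen/Benoist--Oh style, with the main-term constant a ratio of volumes and the exponent $20$ coming from the scaling degree (equivalently the $20$-dimensional unipotent part of the stabilizer). A few imprecisions worth correcting, none of which changes the route: the stabilizer of the vector is the semidirect product of $\mathrm{SO}(2,18)$ with a $20$-dimensional unipotent group, not a parabolic (it is exactly this non-symmetric structure that forces the cusp cutoff and thickening arguments of the paper); no M\"obius inversion is needed, since the single $\Gamma$-orbit already consists precisely of the primitive vectors; the spectral gap is automatic from property (T) because $\mathrm{SO}(3,19)$ has real rank $3$, rather than via Burger--Sarnak and Ramanujan-type bounds (those enter only in making $\delta$ explicit); and the round limit measure on $\bS^2(P)$ is produced directly by running the weighted count against test functions sampled on $K/K_v$ (Haar pushes forward to the round measure), whereas the $P$-independence of $C$ by itself does not constrain the distribution of directions within a fixed sphere.
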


The leading constant $ C>0 $ that appears in the theorem is the ratio of volumes of two homogeneous spaces.
It is computed in \autoref{sssec:tamagawa} up to rational factors to be $C\in (\pi^{20} \zeta(11))^{-1}\cdot \bQ$.

The following counting result for K3 surfaces then follows by combining classical results on the geometry of K3 surfaces with \autoref{thm:main_homogeneous}.

\begin{theoremmain}
  \label{thm:main_elliptic_counting}
  Let $ \cX \to \bS^2 $ be a generic (in the sense of \autoref{def:generic_twistor}) twistor family of K3 surfaces.
  Let $ N(V) $ be the number of elliptic fibrations of volume at most $ V $ which occur in some member of the twistor family.
  Then we have the asymptotic
  \begin{align*}
  N(V) = C\cdot V^{20} + O(V^{20-\delta})
  \end{align*}
  for some universal constants $ C, \delta >0 $.
  Moreover, the collection of points at which the fibrations occur are equidistributed on the twistor sphere $ \bS^2 $.
\end{theoremmain}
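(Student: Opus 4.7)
The plan is to reduce \autoref{thm:main_elliptic_counting} to \autoref{thm:main_homogeneous} via a bijection between primitive isotropic integral classes and elliptic fibrations occurring in the twistor family, together with a matching of the geometric volume with the semi-norm $\norm{\cdot}_P$ on the lattice side.

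A twistor family is encoded by a positive $3$-plane $P \subset \Lambda_\bR$, and the twistor sphere $\bS^2$ is identified with the unit sphere of $P$: at $t \in \bS^2$ the K\"ahler class is $t$ itself, and the holomorphic $2$-form spans the remaining orthogonal $2$-plane in $P\otimes\bC$. For a primitive isotropic $e \in \Lambda_\bZ$, decompose $e = e_P + e_{P^\perp}$; since $P^\perp$ is negative definite and $e^2 = 0$, the projection $e_P$ is nonzero. The class $e$ is of Hodge type $(1,1)$ at $t$ precisely when $e \perp \Omega_t$, equivalently when $e_P$ is parallel to $t$; the requirement that $e$ be effective, i.e. that $e \cdot t > 0$, singles out the unique point $t_e := e_P/\norm{e_P}$. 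By \autoref{prop:hol_slag_corresp} combined with classical K3 theory --- the global Torelli theorem, surjectivity of the period map, and the Pjateckii-\v Sapiro--\v Safarevi\v c characterization of elliptic fibrations via primitive isotropic classes in the N\'eron--Severi lattice --- this assignment gives a bijection between primitive isotropic $e \in \Lambda_\bZ$ and pairs (twistor point, elliptic fibration with that fiber class appearing at that point).

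The volume of any fiber of the elliptic fibration at $t_e$ is
\[
  e \cdot t_e \;=\; e_P \cdot \tfrac{e_P}{\norm{e_P}} \;=\; \tfrac{e_P \cdot e_P}{\norm{e_P}} \;=\; \norm{e}_P,
\]
using $e_P\cdot e_P = \norm{e}_P^2$ from \autoref{ssec:results}. Since the fibers of an elliptic fibration have common volume, this is the volume of the fibration. With the bijection and the volume identification in place, the asymptotic $N(V) = CV^{20} + O(V^{20-\delta})$ and the equidistribution of $e_P/\norm{e}_P$ on the unit sphere of $P$ provided by \autoref{thm:main_homogeneous} translate directly into the two conclusions of \autoref{thm:main_elliptic_counting}.

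The main obstacle is ensuring that \autoref{def:generic_twistor} is strong enough to make the correspondence truly bijective with no lossy identifications. One must rule out: $P$ containing any nonzero rational vector of $\Lambda_\bQ$ (which would create a positive-dimensional locus of complex structures at which a given class is algebraic, and would introduce Picard jumps incompatible with the equidistribution); elliptic fibrations on members of the family that are not detected by a primitive isotropic lattice class (e.g.\ multiple-fiber pathologies); and coincidences where distinct primitive isotropic classes would project to the same ray $t_e$ in $P$ and thus be counted together. Each of these reduces to a standard K3 input, but the inputs must be coordinated so that the quantitative error term $O(V^{20-\delta})$ coming from \autoref{thm:main_homogeneous} passes through without deterioration.
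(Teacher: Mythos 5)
Your overall route is the paper's: identify elliptic fibrations occurring in the family with primitive isotropic vectors $e\in\Lambda_\bZ$, locate the fibration at the point $t_e=e_P/\norm{e_P}$ of $\bS^2(P)$, identify the fiber volume with $\norm{e}_P$, and quote \autoref{thm:main_homogeneous}. The volume computation and the Hodge-theoretic location of $t_e$ are exactly as in \autoref{sssec:coh_volume_computation}. But the crucial step --- that the assignment really is a bijection between primitive isotropic classes (up to sign) and fibrations --- is precisely what you leave open as ``the main obstacle,'' and the way you propose to close it is off target. The paper closes it with \autoref{prop:elliptic_fibration_correspondence}: on a K3 surface with \emph{no $(-2)$ curves}, primitive isotropic classes in $\NS(X)$, up to sign, correspond exactly to elliptic fibrations (Riemann--Roch gives two sections of $\cL_{e}$ or $\cL_{-e}$, absence of $(-2)$ curves forces the zero loci to be disjoint, primitivity plus Stein factorization gives connected fibers, adjunction gives genus one). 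The genericity of \autoref{def:generic_twistor} is then used for exactly one purpose: any member of the family whose N\'eron--Severi group contains a $(-2)$ class has N\'eron--Severi rank one, so a member whose N\'eron--Severi group contains an isotropic class has no $(-2)$ curves and the proposition applies there, with exactly one of $\pm e$ effective. Citing a general Piatetski-Shapiro--Shafarevich existence criterion does not by itself give this clean bijection: on a K3 with $(-2)$ curves a primitive isotropic class in $\NS$ need not be nef, hence need not be a fiber class without first applying Weyl reflections, which would wreck the one-to-one matching with lattice points. This is the one implication your write-up neither proves nor correctly attributes to the genericity hypothesis.

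The specific worries you list are largely non-issues. The presence of a rational vector in $P$ threatens nothing, since \autoref{thm:main_homogeneous} holds for \emph{every} positive-definite $3$-plane with no genericity assumption and the count is over fibrations in the whole family; two distinct primitive isotropic classes with the same ray $t_e$ simply give two distinct fibrations on the same member and two distinct lattice points, so nothing is conflated; and ``multiple-fiber pathologies'' are excluded by the primitivity/connectedness argument inside the proof of \autoref{prop:elliptic_fibration_correspondence}, not by an extra hypothesis. Also, \autoref{prop:hol_slag_corresp} is not the relevant input here (it enters only in deducing \autoref{thm:main_slag_counting}). Finally, there is no risk of the error term deteriorating: once the correspondence is exact, the reduction is an equality of counting functions, so \autoref{thm:main_elliptic_counting} is literally equivalent to \autoref{thm:main_homogeneous} for generic $P$, which is how the paper phrases it.
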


By the equivalence between special Lagrangian fibrations and elliptic fibrations, \autoref{thm:main_elliptic_counting} implies the following one.

\begin{theoremmain}
	\label{thm:main_slag_counting}
	Let $ \cX \to \bS^2 $ be a generic (in the sense of \autoref{def:generic_twistor}) twistor family of K3 surfaces.
	Let $ N(V) $ be the number of special Lagrangian fibrations of volume at most $ V $ which occur in some member of the twistor family.
	Then we have the asymptotic
	\begin{align*}
	N(V) = C\cdot V^{20} + O(V^{20-\delta})
	\end{align*}
	for some universal constants $ C, \delta >0 $.
	Moreover, the collection of equators at which the fibrations occur are equidistributed on the twistor sphere $ \bS^2 $.
\end{theoremmain}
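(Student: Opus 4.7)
The plan is to deduce \autoref{thm:main_slag_counting} directly from \autoref{thm:main_elliptic_counting} by invoking the special Lagrangian $\leftrightarrow$ holomorphic correspondence recorded as \autoref{prop:hol_slag_corresp}. Concretely, I would set up an explicit map between the set of special Lagrangian fibrations occurring somewhere in the twistor family $\cX \to \bS^2$ (indexed by their base equator) and the set of elliptic fibrations in the same family (indexed by their base pole), check that it is volume-preserving and $2$-to-$1$, and then transport the asymptotic and equidistribution statements across.

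First, I would recall the content of the correspondence: given a pair $(t_0, \pi_{t_0})$ consisting of a complex structure $t_0 \in \bS^2$ and a special Lagrangian fibration $\pi_{t_0}\colon \cX_{t_0} \to B$, there is a well-defined equator $E \subset \bS^2$ such that $\pi_{t_0}$ remains special Lagrangian for every $t \in E$, while at the two poles $p_E^{\pm}$ orthogonal to $E$ the same underlying real fibration becomes holomorphic, i.e.\ elliptic. Since the fibers are the same real submanifolds in every member of the twistor family and the underlying Riemannian metric is constant along $\bS^2$, the Riemannian volume of a fiber is intrinsic to the real fibration and therefore agrees between the sLag description on the equator $E$ and the elliptic description at either pole $p_E^{\pm}$. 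This defines a surjection from elliptic fibrations in the twistor family (indexed by their polar $t$) onto sLag fibrations (indexed by their equator $E$) which is exactly $2$-to-$1$, the two preimages being the antipodal pair $\{p_E^+, p_E^-\}$.

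Next, writing $N_{\mathrm{ell}}(V)$ and $N_{\mathrm{sL}}(V)$ for the two counting functions, this correspondence yields $N_{\mathrm{ell}}(V) = 2\,N_{\mathrm{sL}}(V)$, so that \autoref{thm:main_elliptic_counting} immediately gives
\[
  N_{\mathrm{sL}}(V) \;=\; \tfrac{1}{2} C \cdot V^{20} + O\bigl(V^{20-\delta}\bigr),
\]
which is exactly the claimed asymptotic, up to the universal constant $C$ being renamed. The equidistribution statement is transported the same way: the map $p \mapsto p^{\perp}$ sending a pole to its orthogonal great circle is continuous and $\mathrm{SO}(3)$-equivariant, identifying the space of equators on $\bS^2$ with the quotient $\bS^2/\{\pm 1\}$, so equidistribution of the poles (as in \autoref{thm:main_elliptic_counting}) pushes forward to equidistribution of the equators in their natural measure.

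I expect the main obstacle to be not the counting step, which is purely formal, but the verification that the correspondence summarized in \autoref{prop:hol_slag_corresp} really is a bijection with the claimed properties for a generic twistor family: one must check that the equator assigned to a special Lagrangian fibration is well-defined, that two distinct elliptic fibrations at distinct poles remain distinct in the twistor family (so that the map is $2$-to-$1$ and not worse), and that the genericity assumption on the twistor family rules out the degenerations which would collapse this correspondence. Once \autoref{prop:hol_slag_corresp} is in place with volumes matched, the deduction above is mechanical.
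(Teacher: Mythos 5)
Your proposal is correct and follows essentially the same route as the paper: deduce the statement from \autoref{thm:main_elliptic_counting} via \autoref{prop:hol_slag_corresp}, noting that the antipodal poles $\pm u$ carry the same underlying torus fibration (cf. \autoref{rmk:odd}) and that fiber volumes agree with the Riemannian volume, so $N_{\textrm{slag}}(V)=\tfrac12 N_{\textrm{elliptic}}(V)$ and the leading constant is halved. The only minor divergence is the equidistribution transfer: you push forward pole equidistribution under the pole-to-equator map, whereas the paper invokes the Funk transform (\autoref{sssec:Funk_transform}), which acts by explicit scalars on spherical harmonics and thus makes the equivalence quantitative as well as qualitative.
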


Assuming \autoref{thm:main_homogeneous} and some classical facts about the geometry of K3 surfaces, \autoref{thm:main_elliptic_counting} and \autoref{thm:main_slag_counting} are established below.

\paragraph{Genericity}
	The assumption in \autoref{thm:main_elliptic_counting} and \autoref{thm:main_slag_counting} that the twistor family is generic can be made explicit and one can present concrete twistor families to which it applies (see \autoref{prop:generic}).
  	
	In terms of the positive-definite $3$-plane $P\subset \Lambda_\bR$ associated to the twistor family, the condition is that for any $2$-plane $S\subset P$ with orthogonal (inside $\Lambda_\bR$) subspace $S^\perp$, we have:
  	if $\exists v \in (S^\perp\cap \Lambda_\bZ) $ with $v^2=-2$ then $\operatorname{rank} (S^\perp\cap \Lambda_\bZ)\leq 1$.
  The group $S^\perp \cap \Lambda_\bZ$ coincides with the N\'eron--Severi group of a corresponding K3 surface in the twistor family.
	It would be interesting to extend the counting result to more general twistor families, with the appropriate change of constant and exponent in the statement.

\paragraph{Riemannian version} 
\label{par:riemannian_version}
\autoref{thm:main_elliptic_counting} and \autoref{thm:main_slag_counting} have a purely Riemannian interpretation.
Namely, fix a Ricci-flat K3 surface and assume that the associated twistor family is generic.
Then, forgetting the complex structure and considering the Riemannian manifold $(X,g)$ we have:

\begin{theoremmain}
  \label{thm:main_riemannian}
  Let $N(V)$ be the number of connected, absolutely volume-minimizing surfaces $T\to X$ with $\Vol(T)\leq V$ and vanishing self-intersection (i.e. $[T]^2=0$), considered equivalent if they are in the same homology class.
  Then we have the asymptotic
    \begin{align*}
    N(V) = C\cdot V^{20} + O(V^{20-\delta})
    \end{align*}
    for universal constants $C,\delta>0$.
\end{theoremmain}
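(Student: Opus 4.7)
The plan is to deduce \autoref{thm:main_riemannian} from \autoref{thm:main_slag_counting} (equivalently, \autoref{thm:main_elliptic_counting}) by identifying absolutely volume-minimizing surfaces with vanishing self-intersection with the smooth fibers of the special Lagrangian / elliptic fibrations in the twistor family. Let $P\subset \Lambda_\bR$ be the positive-definite $3$-plane associated to the given Ricci-flat K3 surface and its twistor family, and use Poincar\'e duality together with the unimodularity of $\Lambda_\bZ$ to identify $H_2(X,\bZ)$ with $\Lambda_\bZ$, under which the self-intersection pairing $[T]^2$ becomes the quadratic form on $\Lambda_\bZ$.

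First I would set up the calibration inequality. Each unit vector $\kappa\in P$ with $\kappa^2=1$ corresponds to a Ricci-flat \Kahler\ form $\omega_\kappa$ in the twistor family, and Wirtinger's inequality gives, for any closed oriented surface $T\subset X$, the bound $\Vol(T)\geq \int_T \omega_\kappa=\kappa\cdot [T]$. Taking the supremum over $\kappa$ yields $\Vol(T)\geq \norm{[T]}_P$, with equality if and only if $T$ is calibrated by the maximizing $\omega_\kappa$, i.e.\ is a holomorphic curve for the complex structure in the twistor family that has $\omega_\kappa$ as its \Kahler\ form.

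Next I would exhibit a bijection between primitive integral isotropic classes $e\in\Lambda_\bZ$ with $\norm{e}_P\leq V$ and homology classes of the objects being counted. Given such an $e$, there is a unique $\kappa(e)\in P$ with $\kappa(e)^2=1$ realizing the supremum in $\norm{e}_P$; the corresponding complex structure renders $e$ a primitive nef isotropic $(1,1)$-class, and the genericity hypothesis (no $(-2)$-classes in the relevant N\'eron--Severi sublattice) together with classical K3 theory produces an elliptic fibration $\cX_{\kappa(e)}\to \bP^1$ whose smooth fibers are connected elliptic curves of class $e$, calibrated by $\omega_{\kappa(e)}$, hence absolutely volume-minimizing with volume $\kappa(e)\cdot e = \norm{e}_P$. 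Conversely, any absolutely volume-minimizing $T$ with $[T]^2=0$ attains equality in the calibration bound, hence is holomorphic for some complex structure in the twistor family; its primitive Hodge class is then forced, again by genericity, to be the fiber class of a unique twistor elliptic fibration. Counting the resulting homology classes and invoking \autoref{thm:main_homogeneous} yields the stated asymptotic.

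The main obstacle I expect is the reverse direction of the bijection: arguing that a Riemannian absolute volume minimizer $T$ with $[T]^2=0$ must be (homologous to) a single smooth fiber of some twistor elliptic fibration, and not, say, a multiple of one, a singular fiber component, or a holomorphic curve associated to a complex structure whose Picard lattice contains additional isotropic $(1,1)$-classes. This requires a regularity and uniqueness-in-homology statement for calibrated minimizers, combined with a careful exploitation of the genericity hypothesis to rule out non-fiber isotropic classes arising in exceptional complex structures of the twistor family.
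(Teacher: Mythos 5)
Your strategy coincides with the paper's: both directions run through the Wirtinger/calibration inequality and the correspondence of \autoref{prop:elliptic_fibration_correspondence}, and your forward direction (smooth fibers are calibrated, hence absolutely volume-minimizing, of volume $\norm{e}_P$) is exactly what the paper does. The one genuine gap is the step you yourself flag as the main obstacle, and the point to record is that the paper closes it with an elementary argument, not with any regularity or ``uniqueness-in-homology'' theory for minimizers. Given a connected absolute minimizer $T\to X$ with $[T]^2=0$ (with $[T]$ primitive, as in your bijection), the proof of \autoref{thm:main_elliptic_counting} already supplies a twistor complex structure, a \Kahler form $\omega$, and an elliptic fibration whose fibers are homologous to $T$ and have volume $[T]\cdot[\omega]$. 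Wirtinger gives $[T]\cdot[\omega]=\int_T\omega\leq \Vol(T)$, and absolute minimality against the fibers forces equality, so $T$ is a holomorphic curve for that complex structure --- this much you have. The paper then finishes in one line: on a K3 surface a line bundle is determined by its class in $H^2$, so $\cO_X(T)\cong\cO_X(E)$ for a fiber $E$, hence $T$ is the vanishing locus of a section of $\cO_X(E)$, i.e.\ a member of the elliptic pencil. No geometric measure theory is needed, because $T$ is by definition a smooth map and the only inputs are the equality case of Wirtinger and the Lefschetz $(1,1)$/simple-connectedness facts already used in \autoref{prop:elliptic_fibration_correspondence}.

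Two of your residual worries largely dissolve because surfaces are identified when homologous. A singular fiber (or its normalization) lies in the same class $e$ as the smooth fibers, so it creates no new object of the count, and for a generic twistor plane there are no $(-2)$-curves, hence no reducible fibers. The multiple-cover issue is real but harmless for the statement as given: a connected $k$-fold cover of a fiber is calibrated in class $ke$, so imprimitive classes are also realized; since the constant $C$ in \autoref{thm:main_riemannian} is not claimed to equal the one in \autoref{thm:main_elliptic_counting}, this at most multiplies the leading constant by the convergent factor $\sum_{k\geq 1}k^{-20}$, and the paper's own proof silently restricts to primitive $[T]$ when invoking the fibration with fiber class $[T]$. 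If you carry out your plan, either restrict to primitive classes or note this normalization explicitly.
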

Above, a surface $T\to X$ means a smooth map from a connected surface $T$ to $X$.
It induces a fundamental class $[T]\in H_2(X)$ and has a volume by pullback of the Riemannian metric.
An absolutely volume-minimizing surface is one whose volume is less than or equal to that of any other surface in the same homology class.



\subsection{Proofs of \autoref{thm:main_elliptic_counting}, \autoref{thm:main_slag_counting}, and \autoref{thm:main_riemannian}}

We now present the proofs of \autoref{thm:main_elliptic_counting}, \autoref{thm:main_slag_counting}, and \autoref{thm:main_riemannian} assuming \autoref{thm:main_homogeneous} and some facts which are explained in \autoref{sec:K3_background}.

Fix a K3 surface $X$ and an isomorphism $H^2(X)\cong \Lambda$.
Pick a \Kahler cohomology class with representative a Ricci-flat metric $\omega$, such that the real, positive-definite $3$-plane
\[
	P = \left( (H^{2,0} \oplus H^{0,2}) \cap H^2(X;\bR)\right) \oplus \bR [\omega] \subset \Lambda_\bR
\]
is generic in the sense of \autoref{def:generic_twistor}.

The unit sphere $\bS^2(P)$ parametrizes the points in the twistor family, as explained in \autoref{sssec:twistor_families}.
Each point $u \in \bS^2(P)$ corresponds to a new complex structure $J_u$ on $X$, as well as a \Kahler form $\omega_u$.
Note that the Riemannian metric $g = \omega_u(-,J_u-)$ is independent of $u$ and is Ricci-flat.

\paragraph{\autoref{thm:main_homogeneous} implies \autoref{thm:main_elliptic_counting}}
By \autoref{prop:elliptic_fibration_correspondence}, when the twistor plane $P$ is generic, elliptic fibrations at the complex structure $u\in \bS^2(P)$ are in correspondence with primitive integral isotropic vectors $e\in \Lambda_\bZ$ such that the orthogonal projection of $e$ to $P$ is proportional to $u$ with a positive factor.
By definition of the seminorm $\norm{-}_P$ the factor of proportionality between the projection of $e$ to $P$ and $u$ is exactly $\norm{e}_P$; it gives the volume of the elliptic fibration by \autoref{sssec:coh_volume_computation}.
Note that $-e$ will give an elliptic fibration at the complex structure $-u\in \bS^2(P)$).

We have thus established that in \autoref{thm:main_elliptic_counting} the counting function $N(V)$ is the same as the number of primitive integral isotropic vectors $e\in \Lambda_\bZ$ with $\norm{e}_{P}\leq V$.
The points $u_e \in \bS^2(P)$ where the elliptic fibrations occur are $\frac 1 {\norm{e}_{P}} e_P$ where $e_P$ is the projection of $e$ to $P$.
Therefore for generic planes $P$, \autoref{thm:main_homogeneous} and \autoref{thm:main_elliptic_counting} are, in fact, equivalent.

\paragraph{\autoref{thm:main_elliptic_counting} implies \autoref{thm:main_slag_counting}}
First, note that if a real torus $\bT^2\subset X$ is a complex genus one curve for one complex structure $u \in \bS^2(P)$, then it is also one at the opposite point $-u\in \bS^2(P)$.
By the correspondence between special Lagrangian and elliptic fibrations in \autoref{prop:hol_slag_corresp}, the same torus is a special Lagrangian (of appropriate phase) for any point on the equator corresponding to the two poles $u,-u$.
The volumes in both cases are equal to the Riemannian volume of the torus.
The elliptic fibration counting theorem then immediately implies the same statement about special Lagrangian fibrations, from the relation $N_{\textrm{slag}}(V) = \tfrac 1 2 N_{\textrm{elliptic}}(V)$ just established.
Note therefore that the leading term in \autoref{thm:main_slag_counting} is one half of that in \autoref{thm:main_elliptic_counting}.

The quantitative equidistribution statements about the poles/equ\-ators are seen to be equivalent via the Funk transform, discussed in \autoref{sssec:Funk_transform}.

\paragraph{Proof of \autoref{thm:main_riemannian}} 
\label{par:proof_of_thm:main_riemannian}
We will show that the Riemannian and holomorphic (resp. sLag) results count, in essence, the same objects.
In one direction, it is clear that any smooth fiber of an elliptic or sLag fibration gives an absolutely volume-minimizing surface in $X$, and all fibers are homologous.

Conversely, suppose that $T\to X$ is an absolutely volume-minimizing surface with $[T]^2=0$.
By the proof of \autoref{thm:main_elliptic_counting} above, there exists a complex structure on $X$ and an elliptic fibration of $X$ with class of fiber equal to $[T]$.
Let $\omega$ be the corresponding \Kahler form.
Then the volume of any fiber is equal to $[T]\cdot [\omega]$.

Now by Wirtinger's inequality we have
\[
  [T]\cdot [\omega]=\int_T \omega \leq \Vol(T).
\]
But $T$ is absolutely volume-minimizing and the fibers of the fibration already realize equality, so it must be that $\int_T \omega = \Vol(T)$.
From the equality case of Wirtinger's inequality, if follows that $T$ itself is a complex submanifold of $X$.

Because on a K3 surface a line bundle is determined by its class in $H^2$, it follows that the line bundles $\cO_X(T)$ and $\cO_X(E)$ are isomorphic, for any fiber $E$ of the elliptic fibration.
This implies that $T$ is the vanishing locus of a global section of $\cO_X(E)$, i.e. $T$ itself is a fiber of the elliptic fibration (see also the proof of \autoref{prop:elliptic_fibration_correspondence}). \qed


\subsection{Analogies}

The counting result of \autoref{thm:main_slag_counting} can be seen as an analogue of counting closed billiard trajectories in the real $ 2 $-dimensional case.
Below are further analogies between the geometry and dynamics of surfaces in real and complex dimension $ 2 $.
Some of them have been suggested before, for instance in the work of Cantat \cite{Cantat} and McMullen \cite{McMullenSiegel}.
Others are suggested by the results of this paper.

Specifically, the action of complex automorphisms on the cohomology of a K3 surface and its relation to entropy goes back to Gromov \cite{Gromov} and has been studied further by Cantat \cite{Cantat}.
In particular, Cantat established the stable and unstable currents of a hyperbolic K3 automorphism as analogous to stable and unstable foliations for pseudo-Anosov mappings.
\Teichmuller space and the mapping class group could be interpreted on the K3 side as a period domain with an action of an arithmetic group.
Note that by the Torelli theorems (see \cite{K3sem}) there is a variety of period domains to consider on the K3 side; for example period domains of Hodge structures parametrize the complex structures on a K3, while period domains of twistor planes parametrize Ricci-flat metrics.

On the Riemannian part of the analogy, the flat metric on a Riemann surface is replaced by the Ricci-flat metric on a K3.
The holomorphic $ 1 $-form, when viewed as two real $1$-forms given by the real and imaginary parts, is replaced by the holomorphic $ 2 $-form (with real and imaginary parts) plus the \Kahler metric.
Straight lines are generalized to special Lagrangians.

The notion of ``angle'' on $ \bS^1 $ becomes now a point on the twistor sphere $ \bS^2 $.
As a flat surface is rotated, the metric and complex structure stay the same.
On the other hand, in a twistor family the Riemannian metric stays the same while the complex structure is changing.
This is because the complex structure is determined by only two of the three ``framing $2$-forms'' $\ip{\Re \Omega, \Im \Omega, \omega}$ (see also \autoref{sssec:diff_forms_twistor}).

Note that the variations of Hodge structures arising from K3 surfaces also admit an analogue of the Eskin--Kontsevich--Zorich \cite{EKZ_sum} formula for the sum of Lyapunov exponents.
This is established in \cite{sfilip_K3}.

A further application of Ricci-flat metrics to the dynamics of holomorphic automorphisms of K3 surfaces appears in \cite{FilipTosatti_KummerRigidity}.
In the reverse direction, dynamical techniques provide counterexamples to conjectured regularity of solutions to Monge--Amp\`ere equations in the boundary of the \Kahler cone in \cite{FilipTosatti_SmoothRough}.

\newcommand{\mc}[3]{\multicolumn{#1}{#2}{#3}}

\renewcommand{\arraystretch}{1.1}

\vskip 2em

	\begin{tabular}{p{0.51\textwidth} p{0.48\textwidth}}
		\toprule
		\multicolumn{1}{c}{\textbf{Riemann surfaces} } &  \multicolumn{1}{c}{ \textbf{K3 surfaces} }\\
		\midrule
		Mapping classes of diffeomorphisms: \newline pseudo-Anosov, reducible, periodic
		&
		Holomorphic automorphisms:\newline
		hyperbolic, parabolic, elliptic \\
		\midrule
		Entropy, action on curves & Entropy, action on ${H}^2$\\
		\midrule
		Stable and unstable foliations		 & Stable and unstable currents\\
		\midrule
    \Teichmuller space     & Period Domain(s)\\
    \midrule
		Flat metrics 								   & Ricci-flat (hyperk\"{a}hler) metrics\\
		\midrule
		Holomorphic $1$-form				& Holomorphic $2$-form\\
		\midrule
		Straight lines for the flat metric	  & Special Lagrangians\\
		\midrule
		Periodic trajectories 					   & Special Lagrangian tori\\
		\midrule
		Completely Periodic Foliations & Torus Fibrations\\
		\midrule
		$\bS^1$: directions for straight lines & $\bS^2$: twistor (hyperk\"{a}hler) rotation\\
		\midrule
		\multicolumn{2}{c}{Lyapunov exponents for families }\\
		\bottomrule
	 \end{tabular}

\subsection{Remarks and References}

General references for the theory of K3 surfaces are the collection of notes \cite{K3sem} and the book by Huybrechts \cite{Huybrechts_K3}.
For Ricci-flat metrics and special Lagrangians, a general reference is the collection of notes \cite{Joyce_Gross_Huybrechts}.
The results necessary for this paper are recalled in \autoref{sec:K3_background}.

The counting techniques used in this paper go back to Eskin--Mc\-Mullen \cite{EskinMcMullen} and have been sharpened and quantified by Benoist--Oh \cite{Benoist_Oh} (see also the initial work of Duke--Rudnick--Sarnak \cite{DRS}).
The main results would extend to the case of general hyperk\"ahler manifolds, assuming one can establish an equivalence between classes of the fiber in a special Lagrangian fibration and null vectors in the closure of the positive cone.

In a different but related context, Tayou \cite{Tayou_HodgeLoci} has established a counting and equidistribution result for the Hodge locus in a polarized family of K3 surfaces.

\paragraph{Further work}
It would be interesting to extend the counting result from \autoref{thm:main_slag_counting} to all twistor families.

A Lagrangian fibration of a symplectic manifold induces on the base an integral-affine structure; the special Lagrangian fibrations counted in \autoref{thm:main_slag_counting} have as base the Riemann sphere $\bP^1(\bC)\cong \bS^2$.
The moduli space of integral-affine structures on the sphere carries an interesting geometry and similarities with strata of flat surfaces.
It is also connected to non-archimedean analytic geometry (see e.g. the work of Kontsevich--Soibelman \cite{Kontsevich_Soibelman}).
We hope to explore some of these analogies in future work.

\paragraph{Paper Outline}
\autoref{sec:K3_background} contains the necessary background on K3 surfaces, specifically the results needed to derive the counting results on K3 surfaces from the corresponding homogeneous results.
The section has an expository character and is meant for readers unfamiliar with K3 surfaces.

\autoref{sec:counting} contains the proof of \autoref{thm:main_homogeneous}, with a precise statement in \autoref{thm:homogeneous_counting}.
It is deduced from an equidistribution result in homogeneous spaces which is established in \autoref{sec:equidistribution}.


\paragraph{Acknowledgments}
I am grateful to Nicolas Bergeron and Carlos Matheus for their interest in this work, and for explicitly computing the constant $ \delta $ that appears in \autoref{thm:main_homogeneous}.

For discussions on the topic of this paper I am grateful to Yves Benoist, who in particular suggested a formulation along the lines of \autoref{thm:main_riemannian}, as well as Alex Eskin, Daniel Huybrechts, Martin M\"{o}ller, Misha Verbitsky, and Anton Zorich.
I received useful feedback and suggestions on a preliminary version of the text from Curt McMullen.

I am also grateful to the anonymous referee, whose remarks and suggestions significantly streamlined and improved the presentation in the paper.

This research was partially conducted during the period the author served as a Clay Research Fellow.

\section{Background on K3 surfaces}
\label{sec:K3_background}

This section collects classical results on K3 surfaces which were used in the derivation of \autoref{thm:main_elliptic_counting} and \autoref{thm:main_slag_counting} from the homogeneous counting result of \autoref{thm:main_homogeneous}.

\subsection{Basic definitions}

A detailed introduction to the theory of K3 surfaces is in the collected seminar notes \cite{K3sem}.
Huybrechts \cite{Huybrechts_K3} provides an updated account.

\begin{definition}
 A \emph{K3 surface} is a compact complex two-dimensional manifold $X$ that has trivial canonical bundle and is simply connected.
\end{definition}
Because the canonical bundle of a K3 is trivial, there is a unique up to scale holomorphic nowhere vanishing $2$-form $\Omega$.




\subsubsection{Topological structure}
All K3 surfaces are diffeomorphic, and as a consequence the only non-trivial cohomology group is $H^2(X;\bZ)$.
Equipped with cup product, it is an even unimodular lattice of signature $(3,19)$ and this uniquely determines the isomorphism class of the lattice.
Denoting by $U$ the hyperbolic plane and by $(-E_8)$ the $E_8$ lattice with opposite sign of the quadratic form, the second cohomology group is non-canonically isomorphic to the K3 lattice
\begin{align*}
\bI_{3,19} := U^{\oplus 3}\bigoplus (-E_8)^{\oplus 2}.
\end{align*}
To simplify notation, the integral lattice is denoted by $ \Lambda_{\bZ} $ and its extension of scalars to a ring $R$ is denoted $\Lambda_R$.


Let $q$ denote the quadratic form on $\Lambda_\bZ$ (or its extension of scalars).
The induced bilinear form is
\begin{align*}
v\cdot w := \tfrac 12 \big(q(v+w) - q(v) - q(w) \big).
\end{align*}
Note that because $q$ is even, the bilinear form takes integer values.

\subsubsection{Hodge structure.}
The second cohomology group of a K3 surface $X$ carries a Hodge decomposition
\begin{align}
H^2(X;\bC) = H^{2,0}(X)\oplus H^{1,1}(X) \oplus H^{0,2}(X).
\end{align}
In this case, $H^{2,0} = \conj{H^{0,2}}$ and $H^{1,1}=\conj{H^{1,1}}$.
In particular, $H^{1,1}$ is the complexification of a real space, denoted $H^{1,1}_\bR(X)\subset H^2(X;\bR)$.

The space $H^{2,0}$ is $1$-dimensional and generated by the class of the holomorphic $ 2 $-form $\Omega$.
Expressing $ \Omega $ in holomorphic local coordinates gives:
\begin{align*}
[\Omega]\cdot [\Omega] = 0 \textrm{ and } [\Omega]\cdot [\conj{\Omega}] > 0.
\end{align*}
The complex conjugate of $\Omega$, denoted $\conj{\Omega}$, generates $H^{0,2}$.

\subsubsection{Riemann--Roch and Serre Duality}
Because on a K3 surface the canonical bundle is trivial, many cohomological statements take a particularly simple form.
The Rie\-mann--Roch theorem for a line bundle $\cL$ gives
\begin{align*}
 \chi(\cL) = \frac 12 [\cL]\cdot[\cL] + 2
\end{align*}
where $[\cL]$ denotes the first Chern class of the line bundle as an element of $\Lambda_\bZ$.
Serre duality implies $h^i(\cL)=h^{2-i}(\cL^\dual)$ where $\cL^\dual$ is the dual line bundle.
Using the definition of $\chi(\cL)$ and Serre duality, Riemann--Roch becomes:
\begin{align}
\label{eqn:RR_formula}
h^0(\cL) - h^1(\cL) + h^0(\cL^\dual) = \frac 12 [\cL]\cdot[\cL]+2.
\end{align}
Note that if $[\cL]^2\geq -2$ then at least one of $\cL$ or $\cL^\dual$ has a section, and in fact only one of them does unless $\cL$ is trivial.

\subsubsection{Adjunction}
Let now $C\subset X$ be a curve on a K3 surface.
The arithmetic genus of $C$ is defined as $p_a(C):= \rank H^1(C,\cO_C)$ and can be expressed by the \emph{adjunction formula}
\begin{align}
\label{eqn:adjunction_formula}
 2p_a(C)-2 = [C]\cdot[C]
\end{align}
As before, $[C]$ denotes the cohomology class of the curve $C$ on the surface.
When $C$ is smooth the arithmetic genus is the same as the topological genus.
When $C$ is not smooth the arithmetic genus equals the dimension of a space of meromorphic differentials on the normalization $\tilde{C}$ (whose genus can be smaller than the arithmetic genus of $C$).

\subsubsection{N\'eron--Severi group}
The Chern class of a holomorphic line bundle lies in a subgroup of the second cohomology, namely
$$
\NS(X) := H^{1,1}_\bR(X) \cap H^2(X;\bZ).
$$
Conversely, given any class $c\in \NS(X)$, by the Lefschetz $(1,1)$-theorem there will be a holomorphic line bundle $\cL_c$ with this Chern class.
The line bundle is unique, since a K3 surface is simply connected.
The cohomology class of any curve $C\subset X$ will also be in $\NS(X)$.

\subsubsection{The $(-2)$ curves}
\label{sssec:-2curves}
For any line bundle on a K3 surface, consider the common vanishing locus of all sections.
The $1$-dimensional part of this locus, if non-empty, is a union of irreducible curves $C$ with $[C]^2=-2$, possibly with multiplicities (see \cite[\S2.1.4]{Huybrechts_K3}).
An irreducible curve $C$ with $[C]^2=-2$ is called a $(-2)$ curve and is necessarily smooth and isomorphic to $\bP^1$.

\subsubsection{Elliptic fibrations}
Throughout this paper, an elliptic fibration will mean a holomorphic map $X\to B$ with general fiber a smooth connected genus $1$ curve (see \cite[Ch.~11]{Huybrechts_K3} for a more thorough discussion).
Two fibrations $X\to B$ and $X\to B'$ are equivalent if there is a map $B\to B'$ such that the corresponding diagram commutes.

\begin{proposition}
	\label{prop:elliptic_fibration_correspondence}
	Let $X$ be a K3 surface with no $(-2)$ curves.
	Then elliptic fibrations are in one-to-one correspondence with primitive integral elements $e\in \NS(X)$ with $e^2=0$, up to identifying $e$ with $ -e$.
	
	To an elliptic fibration one associates the class of a general fiber, which will be primitive integral isotropic, and conversely, for each such $e$ exactly one of $\pm e$ will occur as the fiber class of an elliptic fibration.
\end{proposition}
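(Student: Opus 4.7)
The plan is to construct mutually inverse maps between elliptic fibrations and $\{\pm e\}$--pairs using Riemann--Roch, the no--$(-2)$--curves hypothesis to establish nefness, and Saint-Donat's classification of nef isotropic line bundles on K3 surfaces. For the forward direction, given an elliptic fibration $\pi\colon X\to B$ with smooth connected genus $1$ general fiber $F$, set $e:=[F]\in \NS(X)$. Two distinct smooth fibers are disjoint and homologous, so $e^2 = 0$. Primitivity of $e$ follows by contradiction: if $e=kf$ with $f\in \NS(X)$ integral and $k\geq 2$, Riemann--Roch produces an effective divisor $D$ in class $\pm f$, which satisfies $[D]\cdot e = \pm k f^2 = 0$ and hence is a union of components of fibers, so $[D]$ is a non--negative integer multiple of $e$, contradicting $[D]=\pm f$. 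The sign ambiguity in $e$ reflects the choice of effective versus anti--effective representative.

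For the reverse direction, given primitive isotropic $e\in \NS(X)$, Riemann--Roch \autoref{eqn:RR_formula} gives
\[
h^0(\cL_e) - h^1(\cL_e) + h^0(\cL_{-e}) = 2,
\]
so at least one of $\cL_{\pm e}$ has a non--zero section. Both cannot: the product of non--zero sections of $\cL_e$ and $\cL_{-e}$ would be a global section of $\cO_X$ vanishing on the union of two non--trivial effective divisors. Replacing $e$ by $-e$ if necessary, let $D$ be an effective divisor with $[D]=e$. Next I would show that $\cL_e$ is nef: any irreducible curve $C\subset X$ satisfies $[C]^2\geq -2$ by adjunction \autoref{eqn:adjunction_formula}, and the no--$(-2)$--curves hypothesis upgrades this to $[C]^2\geq 0$. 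Since $\NS(X)$ has Lorentzian signature $(1,\rho-1)$ and both $e$ and $[C]$ lie in the closure of the same (K\"ahler) half of the positive cone (each pairs positively with every K\"ahler class on $X$), a Cauchy--Schwarz inequality in signature $(1,n)$ yields $e\cdot[C]\geq 0$.

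The decisive ingredient is Saint-Donat's theorem on K3 surfaces (see \cite[Ch.~2 and Ch.~11]{Huybrechts_K3}): for any nef, non--trivial line bundle $\cL$ with $\cL^2=0$, there exists an elliptic fibration $\pi\colon X\to \bP^1$ and an integer $k\geq 1$ such that $\cL\cong \pi^*\cO_{\bP^1}(k)$. Applied to $\cL_e$, primitivity of $e$ forces $k=1$, so $e$ is the class of a general fiber of $\pi$, and that general fiber is smooth of genus one by adjunction. The two constructions are then manifestly inverse to each other, with the sign ambiguity $e\sim -e$ matching the effective/anti--effective choice from the forward direction. The main obstacle is Saint-Donat's classification itself, whose proof requires a careful analysis of base loci and fixed components of linear systems on K3 surfaces; the no--$(-2)$--curves hypothesis is precisely what converts \emph{effective and isotropic} into \emph{nef and isotropic}, eliminating the exceptional cases in that classification and allowing its clean application here.
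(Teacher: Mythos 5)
Your proposal is essentially correct, but it takes a genuinely different route from the paper in the converse direction. The paper argues directly and more elementarily: after choosing the sign of $e$ so that $\cL_e$ has two sections, the absence of $(-2)$ curves rules out a common $1$-dimensional vanishing locus (\autoref{sssec:-2curves}), $e^2=0$ forces the two zero loci to be disjoint, so the pencil is base-point free and defines a morphism $X\to\bP^1$; connectedness of fibers comes from primitivity via Stein factorization, and the genus follows from adjunction \eqref{eqn:adjunction_formula}. You instead establish nefness of $\cL_e$ and import Saint-Donat's classification of nef isotropic line bundles; this buys a shorter argument at the cost of a heavier black box, while the paper's construction is self-contained precisely because the no-$(-2)$-curves hypothesis does the base-locus analysis for you. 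Your forward-direction primitivity argument is also genuinely additional content: the paper only asserts that primitivity and the sign statement follow from its construction, whereas you give a direct divisor-theoretic argument.

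Two steps need repair. First, the assertion that $\NS(X)$ has Lorentzian signature $(1,\rho-1)$ presupposes that $X$ is projective, which is not assumed and in fact typically fails in the paper's application: for a generic twistor plane (\autoref{def:generic_twistor}) the relevant K3 surfaces can have $\NS(X)=\bZ e$ with $e$ isotropic, so the intersection form on $\NS(X)$ is degenerate and $X$ carries no ample class. The Cauchy--Schwarz step should instead be run in $H^{1,1}_\bR(X)$, which has signature $(1,19)$ by the Hodge index theorem; there both $e$ (effective after the sign choice) and any irreducible curve class $[C]$ lie in the closure of the same positive cone, since each has non-negative square and pairs positively with a K\"ahler class, and the reverse Cauchy--Schwarz inequality then gives $e\cdot[C]\geq 0$. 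For the same reason you should quote the classification of nef isotropic line bundles in a form valid for possibly non-projective K3 surfaces (as in \cite[Ch.~11]{Huybrechts_K3}), not only in Saint-Donat's original algebraic setting. Second, in the forward direction the step ``$[D]\cdot e=0$ and $D$ effective, hence $[D]$ is a non-negative integer multiple of $e$'' uses more than you state: components of $D$ lie in fibers, fibers are irreducible because any component of a reducible fiber would be a $(-2)$ curve (Zariski's lemma plus adjunction), and one must also exclude multiple fibers --- otherwise a fiber component could have class $e/m$ and the contradiction $\pm f = c\,e$ with $c$ a positive integer would not follow. On a K3 surface elliptic fibrations have no multiple fibers (canonical bundle formula, using $K_X=\cO_X$ and $\chi(\cO_X)=2$), so the gap closes, but this should be said explicitly.
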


\begin{proof}
	If a K3 surface has an elliptic fibration, then the class of the general fiber is in the N\'eron--Severi group and is isotropic.
	The argument below will imply that the class is also primitive and $-e$ cannot occur as the class of a fibration.

	Let now $e\in \NS(X)$ be a primitive class with $e^2=0$; by \hyperref[eqn:RR_formula]{Riemann-Roch \eqref{eqn:RR_formula}} assume that the associated line bundle $\cL_e$ has at least two sections $s_1,s_2$ (otherwise pick $-e$).
	The two sections cannot have a common $1$-di\-men\-sional vanishing locus, since there are no $(-2)$ curves by assumption (see \autoref{sssec:-2curves}).
	The zero locus of each $s_i$ represents $e$ in (co)homology, and since $e^2=0$ the zero loci cannot intersect at all.
	This gives a well-defined holomorphic map $X\to \bP^1$.
	The fibers of the map are connected because $e$ is primitive (applying Stein factorization, components of disconnected fibers would have to be homologous).

	To conclude, by the \hyperref[eqn:adjunction_formula]{adjunction formula \eqref{eqn:adjunction_formula}} the fibers of the map have arithmetic genus $1$.
	Thus, the generic fiber is an irreducible smooth curve of genus $1$ and $X\to \bP^1$ is an elliptic fibration (see also \cite[Expos\'e IV, Sec. 3]{K3sem}).
\end{proof}

\subsection{Twistor families}
\label{ssec:twistor_family}

A detailed account of the material in this section is in the collected notes by Joyce, Gross, and Huybrechts \cite{Joyce_Gross_Huybrechts}.
For holonomy groups see Part I, and for hyperk\"ahler metrics and twistor families see Part III of loc. cit.

\subsubsection{Calabi--Yau metrics}
As a consequence of Yau's solution of the Calabi conjecture \cite{Yau_Ricci} any cohomology class in $\Lambda_\bR$ which is representable by \emph{some} \Kahler metric has a \emph{unique} representative with vanishing Ricci curvature.
Let therefore $(X,g,I)$ denote a triple where $X$ is a K3 surface with complex structure $I$ (viewed as an appropriate tensor) and $g$ is a Ricci-flat Riemannian metric with \Kahler form $\omega_I(-,-):=g(I-,-)$.



\subsubsection{Hyperk\"ahler structures}
\label{sssec:hyperkahler_def}
In Riemannian terms, the Ricci-flatness of the \Kahler metric on a K3 surface is equivalent to the holonomy group of the Riemannian metric being contained in $\SU(2)$.
Equivalently, the holomorphic $2$-form $\Omega$ is preserved by parallel transport using the Levi--Civita connection.

Because of the exceptional isomorphism $\SU(2) \cong \Sp(1)$ with the group of unit quaternions, parallel transport respects in addition to the complex structure $I$ another complex structure $J$.
The quaternionic commutation relation $K:=I\cdot J = -J\cdot I$ holds and this gives a family of complex structures $xI+yJ+zK$ with $x^2+y^2+z^2=1$, each preserved by parallel transport.

\noindent The result $(X,g,I,J,K)$ is the data of a \emph{hyperk\"ahler} manifold, where:
\begin{enumerate}
	\item $X$ is a smooth $4n$-manifold\footnote{For a K3 surface $ n=1 $.}, $g$ is a Riemannian metric on $X$.
	\item The tensors $I,J,$ and $K$ are integrable complex structures on $X$, satisfying the quaternionic commutation relations, and acting by isometries of the metric $g$.
	\item The 2-form $\omega_I(-,-):=g(I-,-)$ gives a \Kahler metric (i.e. $d\omega_I=0$), and similarly for $J$ and $K$.
\end{enumerate}

\subsubsection{Twistor families}
\label{sssec:twistor_families}
A hyperk\"ahler manifold as above gives rise to a natural family of complex manifolds over the unit sphere $\bS^2$.
The total space of the family is $\cX:=X\times \bS^2$ with the natural projection to $\bS^2$.
Given $t=(x,y,z)\in \bS^2$ a triple of real numbers satisfying $x^2+y^2+z^2=1$, define on the fiber $X_t$ the complex structure $I_t:=xI+yJ+zK$.
In the direction transverse to the fiber, the complex structure is the natural one induced from that on $\bS^2$.
A calculation shows that the natural projection map $\cX\to \bS^2=\bP^1(\bC)$ is holomorphic, and gives a holomorphic family of complex manifolds.

\subsubsection{The differential forms in a twistor family}
\label{sssec:diff_forms_twistor}
For a complex structure $I$ on $X$, denote the \Kahler form by $\omega_I(-,-):=g(I-,-)$.
Note that there is an $\bS^1$ worth of choices for the other complex structure $J$ satisfying $IJ=-JI$.
Indeed, any element in the equator perpendicular to $I\in \bS^2$ gives a complex structure with the required commutation relation.

The K3 surface $(X,I)$ also carries a nowhere vanishing holomorphic $2$-form $\Omega$, normalized by the requirement $\int_X \Omega\wedge \conj{\Omega}=2\int_X d\Vol(g)$.
This determines the form up to multiplication by a unit complex number.
The real and imaginary parts of $\Omega$ can now be expressed using the hyperk\"ahler structure, with $J$ and $K$ the other complex structures:
\begin{equation}
\begin{split}
\label{eqn:ReImOmega}
\Re \Omega (-,-) &= \omega_J(-,-) = g(J-,-)\\
\Im \Omega (-,-) &= \omega_K(-,-) = g(K-,-)
\end{split}
\end{equation}
In other words $\Omega = \omega_J + \sqrt{-1} \omega_K$.
If $\Omega$ is fixed, there is a unique pair of $J,K$ which satisfy \eqref{eqn:ReImOmega}.
The action of unit complex numbers by rotating $\Omega$ corresponds to the possible choices of pairs $(J,K)$ on the unit circle perpendicular to $I$ on $\bS^2$.

\subsubsection{Twistor planes}
A choice of \Kahler class $ [\omega] \in \Lambda_\bR $ gives a positive-definite $ 3 $-plane 
\[
 	P:= \operatorname{span} \ip{[\Re \Omega], [\Im \Omega], [\omega]}\subset \Lambda_\bR.
 \] 
Applying the twistor construction from \autoref{sssec:twistor_families} gives a family of K3 surfaces $ X_t $ with $ t \in \bP^1 \cong \bS^2(P) $, with \Kahler forms $ \omega_t $ and holomorphic $ 2 $-forms $ \Omega_t $ (with $ \Omega_t $ well-defined only up to a unit complex number).

Therefore a twistor family gives a positive-definite $ 3 $-plane $ P\subset \Lambda_\bR $ which will be called a twistor plane.
Conversely, given a positive-definite $ 3 $-plane inside $\bI_{3,19}\otimes \bR$ by the Torelli theorem there is an associated twistor family on some K3 surface, under some choice of isomorphism $\bI_{3,19}\otimes \bR \to H^2(X;\bR)$ (see e.g. \cite[Ch. 6]{Huybrechts_K3} or \cite[Ch. VII-X]{K3sem}).

Note that the base of the twistor family is naturally the unit $ 2 $-sphere in the $ 3 $-plane $ P $.
An element $ t\in \bS^2(P) $ gives a natural decomposition $ P = (\bR\cdot t) \oplus t^\perp $ and the \Kahler class $ [\omega_t] $ is tautologically specified by requiring it to be $ t $, while $ H^{2,0}\oplus H^{0,2} $ is just the complexification of $ t^\perp $.

\begin{definition}
	\label{def:generic_twistor}
	A twistor plane $P\subset \Lambda_\bR$ is \emph{generic} if for any $x\in \Lambda_\bZ$ with $x^2=-2$, there does not exist another integral vector $v\in \Lambda_\bZ$ such that $x$ and $v$ have proportional projection to $P$ along $P^\perp$.
	Equivalently, if a K3 surfaces in the twistor family contains a $(-2)$ curve, then the $(-2)$ curve generates the N\'eron--Severi group for that K3 surface.
\end{definition}
A particularly simple example of a generic twistor plane is one for which the N\'eron--Severi group has rank at most one for every member of the family.
The next result justifies the name ``generic'' for such twistor families by showing that they form a set of full measure.

\begin{proposition}
	\label{prop:generic}
	The positive-definite $ 3 $-planes $ P \subset \Lambda_\bR $ giving generic twistor families are the complement of countably many proper submanifolds in the Grassmannian of all positive-definite $ 3 $-planes.
\end{proposition}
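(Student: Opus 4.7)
The plan is to unpack the non-genericity condition from \autoref{def:generic_twistor}, exhibit it as a union of countably many conditions indexed by rational $2$-planes in $\Lambda_\bR$, and show that each such condition cuts out a proper closed subvariety of the Grassmannian of positive-definite $3$-planes (which I will denote $\operatorname{Gr}^+(3,\Lambda_\bR)$).

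First I would reformulate the condition. If $x,v\in\Lambda_\bZ$ with $x^2=-2$ and $v\notin\bR\cdot x$ have proportional projections to $P$, setting $W:=\operatorname{span}_\bR(x,v)\subset\Lambda_\bR$, a $2$-plane, this is equivalent to $\pi_P|_W$ being non-injective, i.e., $P^\perp\cap W\neq 0$. Conversely, any such $W$ arises in this way (choose two linearly independent integer generators, one of which is a $(-2)$-class). Hence the non-generic locus inside $\operatorname{Gr}^+(3,\Lambda_\bR)$ equals
\[
  \bigcup_{W}Z_W,\qquad Z_W:=\{P\in\operatorname{Gr}^+(3,\Lambda_\bR):P^\perp\cap W\neq 0\},
\]
with $W$ ranging over the countable collection of rational $2$-planes spanned by admissible pairs $(x,v)$.

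For each fixed $W$, the set $Z_W$ is a closed real-algebraic subvariety of the ambient Grassmannian $\operatorname{Gr}(3,\Lambda_\bR)$, cut out by the vanishing of a suitable determinant (or by Pl\"ucker-type equations) expressing the failure of injectivity of $\pi_P|_W$. The substantive point is that $Z_W$ is \emph{proper} inside $\operatorname{Gr}^+(3,\Lambda_\bR)$. Since $\dim W+\dim P^\perp=2+19=21<22=\dim\Lambda_\bR$, generic position for the pair $(W,P^\perp)$ forces $W\cap P^\perp=0$; an incidence-correspondence argument (with base $\bP(W)\cong\bP^1$ and fiber $\operatorname{Gr}(3,21)$ consisting of $3$-planes in $w^\perp$ for $[w]\in\bP(W)$) shows that $Z_W$ has codimension $2$ in $\operatorname{Gr}(3,\Lambda_\bR)$. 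Since $\operatorname{Gr}^+(3,\Lambda_\bR)$ is a non-empty Euclidean-open subset of the irreducible real-algebraic variety $\operatorname{Gr}(3,\Lambda_\bR)$, it cannot be contained in the nowhere-dense closed subvariety $Z_W$, so a positive-definite $P_0$ with $P_0^\perp\cap W=0$ exists and $Z_W\cap\operatorname{Gr}^+(3,\Lambda_\bR)$ is a proper real-analytic submanifold.

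Taking the union over the countably many admissible $W$ yields the proposition. The only non-formal step is the dimension count establishing properness of each $Z_W$; all remaining steps are bookkeeping.
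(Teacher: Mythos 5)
Your proof is correct and follows essentially the same route as the paper: the non-generic locus is written as a countable union, indexed by rational rank-$2$ subspaces $W$ spanned by a $(-2)$-class and a second integral vector, of the loci $\{P : P^\perp\cap W\neq 0\}$ (equivalently $\dim \pi_P(W)\leq 1$), and each such locus is shown to be a proper subvariety of the Grassmannian. The only difference is in the properness step: you justify it by an incidence-correspondence count giving codimension $2$ (plus irreducibility of the ambient Grassmannian), whereas the paper simply asserts the condition is of codimension $1$; your count is in fact the sharper one, and in either case only positive codimension is needed for the proposition.
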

\begin{proof}
	Let $ \Gr^+(3,\Lambda_\bR) $ be the Grassmannian of positive-definite $ 3 $-planes.
	For $ N\subset \Lambda_\bZ $ a rank $ 2 $ submodule which contains a vector with $x^2=-2$, consider the set $ D_N\subset \Gr^+(3,\Lambda_\bR) $ of $ 3 $-planes $ P $ for which $ N $ is contained in the N\'eron--Severi group for some element $ t$ in the twistor family $ \bS^2(P) $ (recall that we view $ t $ as a unit vector in $ P $).
	We will see that $ D_N $ is of codimension $ 1 $ in the Grassmannian, hence generic twistor families are the complement of the countable union of the $ D_N $ as $ N $ ranges over the countably many rank $ 2 $ submodules of $ \Lambda_\bZ $.

	A $ 3 $-plane $ P $ gives an orthogonal decomposition $ \Lambda_\bR = P \oplus P^\perp $.
	Denote orthogonal projection onto $ P $ by $ \pi_P $.
	Then $ N $ is contained in the N\'eron--Severi group of an element $ t\in P $ if and only if $ \dim \pi_P(N) \leq 1 $.
	Indeed, if $ \dim \pi_P(N) = 1 $ then $ N $ belongs to the N\'eron--Severi group associated to the image line, and if the dimension is zero (i.e. $ N\subset P^\perp $) then $ N $ is in the N\'eron--Severi group of each element of the corresponding twistor family.
	If the dimension of the projection is $ 2 $, then clearly $ N $ cannot occur in any N\'eron--Severi group.

	Since $ N $ is $ 2 $-dimensional, the condition $ \dim \pi_P(N)\leq 1 $ is a codimension $ 1 $ condition on $ P $, as $ P $ varies in the Grassmannian.
\end{proof}

\subsection{Special Lagrangians}
For a more detailed treatment of special Lagrangian manifolds and fibrations, see \cite[Parts 1 \& 2]{Joyce_Gross_Huybrechts}.

\begin{definition}[Special Lagrangian Manifold]
\label{def:sLag}
Let $X$ be a K3 surface, $\Omega$ its holomorphic $2$-form and $\omega$ a \Kahler form with induced metric $g$.
A real $2$-dimensional submanifold $L\subset X$ is a \emph{special Lagrangian submanifold} (abbreviated sLag) if the following conditions hold:
\begin{itemize}
 \item $L$ is Lagrangian for the symplectic form $\omega$, i.e. $\omega\vert_L=0$
 \item Restricting $\Omega$ to $L$ gives the Riemannian volume form of $L$ coming from $g$, i.e.
 \[
 \Omega\vert_L = d\Vol_g
 \]
 In particular, the imaginary part of $\Omega$ restricts to zero on $L$.
\end{itemize}
\end{definition}
For generic \Kahler metrics it can be difficult to find sLag submanifolds.
However, in the hyperk\"ahler case these exist in abundance, as will be discussed below.

\subsubsection{Special Lagrangian fibrations}
A \emph{sLag fibration} is a proper map $X \to B$ with $\dim_\bR B = \frac 12 \dim_\bR X$ such that all but finitely many fibers are smooth sLag manifolds; by the Arnold--Liouville theorem, the fibers are tori.
Typically the singular fibers are pinched tori and counted with multiplicities, there are $24$.
Indeed, a K3 surface has topological Euler characteristic $24$, a smooth torus has Euler characteristic $ 0 $, and a pinched torus has Euler characteristic $ 1 $.

Since a K3 surface is simply connected, any map to a topological surface of genus at least one must be homotopically trivial.
Therefore the base of a sLag fibration can only be a sphere.

The next result shows that in the hyperk\"ahler case, there is an equivalence between special Lagrangian fibrations and elliptic fibrations, in a different complex structure.

\begin{proposition}[Holomorphic $\leftrightarrow$ sLag correspondence]
	\label{prop:hol_slag_corresp}
 Let $(X,g,\allowbreak I,J,K)$ be a hyperk\"ahler $ 4 $-manifold (see \autoref{sssec:hyperkahler_def} for notation), with corresponding $2$-forms $\omega_\bullet$ ($\bullet\in \{I,J,K\}$).
 The following conditions are equivalent on a $ 2 $-dimensional submanifold $ L\subset X $:
 \begin{enumerate}
 	\item For the complex structure $ I $, holomorphic form $\Omega_I=\omega_J + \sqrt{-1}\omega_K$ and \Kahler form $\omega_I$, the submanifold $ L $ is sLag.
 	\item For the complex structure $ J $, the submanifold $ L $ is complex.
 	\item The two forms $ \omega_I, \omega_K $ vanish when restricted to $ L $, and $ \omega_J $ restricted to $ L $ coincides with Riemannian volume.
 \end{enumerate}
 In particular, special Lagrangian torus fibrations on $(X,\Omega_I,\omega_I)$ are in bijection with elliptic fibrations on $(X,J)$.
\end{proposition}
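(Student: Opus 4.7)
The plan is to reduce everything to pointwise linear algebra on the tangent space $T_pL$ using the quaternionic relations and the fact that $I,J,K$ are all $g$-antisymmetric, then apply the result fiber-by-fiber to deduce the statement about fibrations.

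First I would handle the equivalence of (1) and (3), which is essentially tautological. Unwrapping \autoref{def:sLag} for the complex structure $I$, the conditions on $L$ are $\omega_I|_L \equiv 0$ together with $\Omega_I|_L = d\Vol_g|_L$. Since $\Omega_I = \omega_J + \sqrt{-1}\omega_K$, splitting real and imaginary parts gives exactly $\omega_J|_L = d\Vol_g|_L$ and $\omega_K|_L \equiv 0$, which is condition (3).

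Next I would prove (2) $\Leftrightarrow$ (3). For (3) $\Rightarrow$ (2), the relation $\omega_J|_L = d\Vol_L$ is the equality case of Wirtinger's inequality for the Kähler form $\omega_J$ on $(X,J)$, so $L$ is a complex submanifold with respect to $J$. For (2) $\Rightarrow$ (3), the converse direction of Wirtinger gives $\omega_J|_L = d\Vol_L$ for free. The genuine content is to show $\omega_I|_L \equiv 0$ and $\omega_K|_L \equiv 0$. If $L$ is $J$-complex, then $T_pL$ is $J$-invariant, so for any nonzero $v \in T_pL$ we have $T_pL = \operatorname{span}_\bR\langle v, Jv\rangle$. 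Using $g$-antisymmetry of $I$ and the quaternionic relation $IJ = K$,
\[
\omega_I(v,Jv) = g(Iv, Jv) = -g(v, IJv) = -g(v,Kv),
\]
which vanishes since $K$ is itself $g$-antisymmetric (so $g(v,Kv)=0$). The identical argument with $K$ in place of $I$, using $KJ = -I$, shows $\omega_K(v,Jv) = g(v,Iv) = 0$. This is the only step involving any computation and is the ``main obstacle'' only in the mild sense that one must keep track of the quaternionic signs correctly.

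Finally, to deduce the bijection between fibrations, I would apply the equivalence (1) $\Leftrightarrow$ (2) fiberwise. A sLag torus fibration on $(X,\Omega_I,\omega_I)$ has smooth fibers $L$ away from finitely many points; each such $L$ is sLag for $I$, hence $J$-complex by the equivalence, so the smooth fibers assemble into a family of complex curves for $J$. Conversely, each smooth fiber of an elliptic fibration for $J$ is $J$-complex, hence sLag for $I$. Since both fibrations have the same underlying map to the base (they share the same fibers), they correspond bijectively, completing the proof.
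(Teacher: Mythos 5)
Your proof is correct, and its skeleton matches the paper's: (1)$\Leftrightarrow$(3) is a tautological unwinding of the definition, and (3)$\Rightarrow$(2) is the equality case of Wirtinger, exactly as in the paper. The one place you diverge is the direction (2)$\Rightarrow$(3): the paper notes that in the complex structure $J$ the holomorphic $2$-form is $\Omega_J=\omega_K+\sqrt{-1}\,\omega_I$, so its restriction to a $J$-complex curve vanishes for type reasons (a $(2,0)$-form dies on a $1$-dimensional complex submanifold), which together with Wirtinger gives (3) at once; you instead reprove this vanishing by hand, writing $T_pL=\operatorname{span}\langle v,Jv\rangle$ and computing $\omega_I(v,Jv)=-g(v,Kv)=0$ and $\omega_K(v,Jv)=g(v,Iv)=0$ from $IJ=K$, $KJ=-I$ and the $g$-antisymmetry of $I,J,K$. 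Your computation is sign-correct with the paper's conventions ($\omega_\bullet(-,-)=g(\bullet-,-)$, $K=IJ=-JI$) and is more elementary, needing no appeal to the Hodge type of $\Omega_J$; the paper's phrasing is shorter and makes visible the hyperk\"ahler rotation symmetry (that the triple $(\omega_I,\omega_J,\omega_K)$ plays the same role in each complex structure), which is the viewpoint used elsewhere in the paper. The fiberwise deduction of the bijection between sLag torus fibrations and elliptic fibrations is as intended (the paper does not spell it out), so no gaps.
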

\begin{wrapfigure}{r}{0.31\textwidth}
	\includegraphics[width=0.3\textwidth]{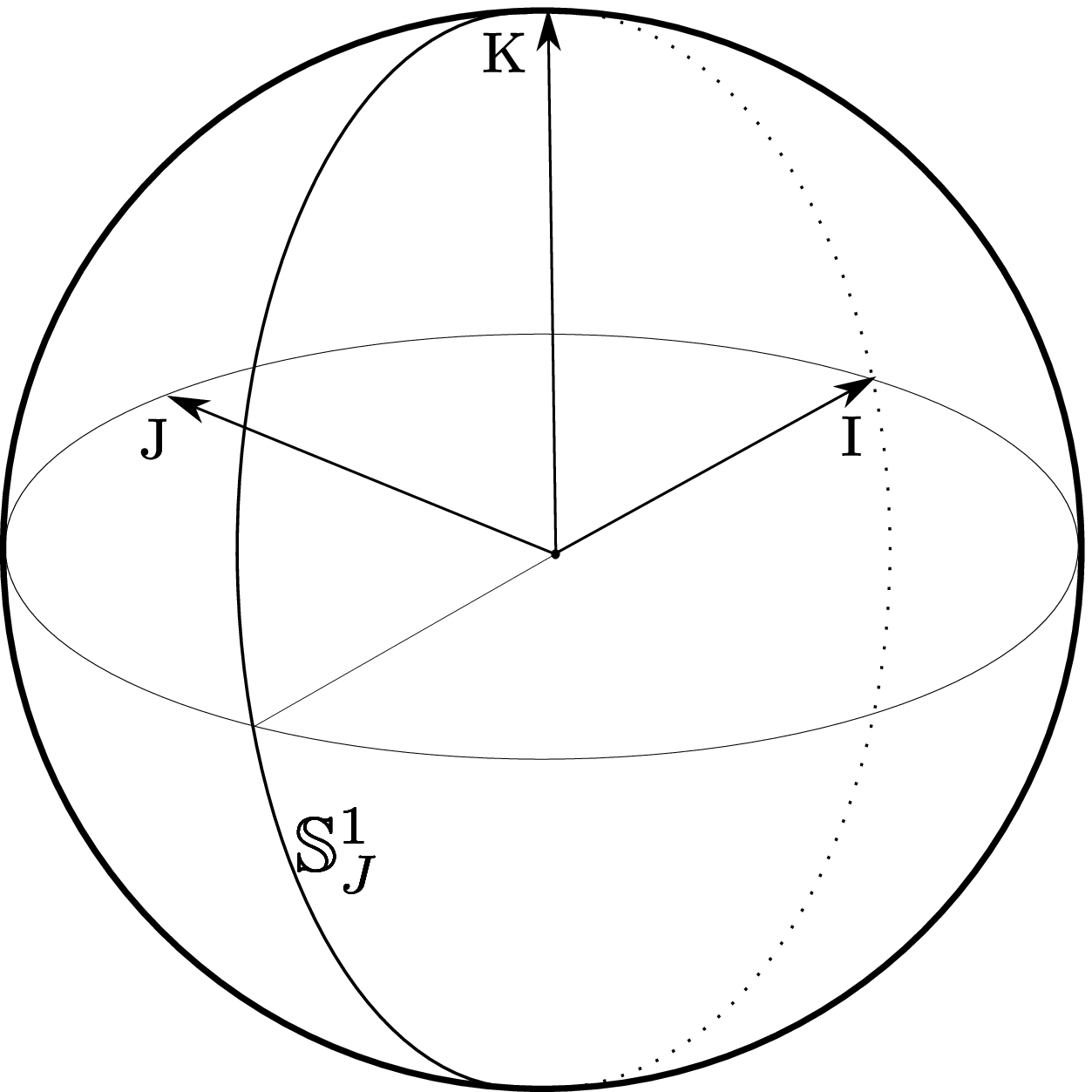}
\end{wrapfigure}\leavevmode
\begin{proof}
	The equivalence of (i) and (iii) follows from the definitions.

	Assume now (iii), i.e. $\omega_I\vert_L=\omega_K\vert_L = 0$ and $\omega_J\vert_L=d\Vol_g$.
	By Wirtinger's theorem, the last equality implies that the tangent spaces to $ L $ are at every point $ J $-complex subspaces of the ambient space.
	Therefore $ L $ is a $ J $-complex submanifold, so (ii) holds.

	Conversely, consider $X$ equipped with the complex structure $J$.
	Then the corresponding holomorphic $2$-form is $\Omega_J:=\omega_K + \sqrt{-1} \omega_I$ and the \Kahler form is $\omega_J$.
	Since $L\subset X$ is a complex submanifold for $J$ and $\Omega_J$ is a $ J $-holomorphic $2$-form, we have $\Omega_J\vert_L=0$; moreover $\omega_J\vert_L=d\Vol_g$ since $L$ is a complex submanifold, and $\omega_J$ is the \Kahler form.
	This gives (iii).
\end{proof}




\begin{remark}
	\label{rmk:odd}
	Elliptic fibrations at the complex structure $ J $ are in natural bijective correspondence to ones at the complex structure $ -J $, with the same underlying set-theoretical fibrations but reversed orientation.
	The corresponding equator $ \bS^1_J $ is unique and parametrizes complex structure where the same fibration is sLag.
\end{remark}

\subsubsection{Volume of a fibration}
\label{sssec:volume_fibration}
For an elliptic fibration $X\xrightarrow{\pi} \bP^1$ on a K3 surface with \Kahler form $\omega$, define the volume of the fibration as
\begin{align}
 \label{eqn:vol_ell_fibr}
 \Vol(X\xrightarrow{\pi}\bP^1) := \int_{\pi^{-1}(pt)} \omega.
\end{align}
This is the volume of a fiber and is independent of the choice of fiber.

For a sLag fibration $X\xrightarrow{\pi}\bS^2$ of a K3 surface $X$ with holomorphic $2$-form $\Omega$, define the volume as
\begin{align}
 \label{eqn:vol_sLag_fibr}
 \Vol(X\xrightarrow{\pi}\bS^2) := \int_{\pi^{-1}(pt)} \Omega.
\end{align}
This is also equal to the Riemannian volume of any one fiber, since the fibers are special Lagrangian.

In both \autoref{eqn:vol_ell_fibr} and \autoref{eqn:vol_sLag_fibr} the volumes are computed by integrating a closed $ 2 $-form over the fiber of a fibration, and since the fibers are homologous the volume is independent of the choice of fiber.

\subsubsection{Cohomological computation of the volume}
\label{sssec:coh_volume_computation}
For a fibration, the class of a fiber $ [\pi^{-1}(pt)]\in H_2(X;\bZ) $ can be identified by cup product duality with a primitive isotropic $e\in \Lambda_\bZ$.
For an elliptic fibration, if $[\omega] \in \Lambda_\bR$ is the \Kahler class, the volume of the fibration is then
\begin{align}
\label{eqn:vol_coh}
\Vol(X \xrightarrow{\pi} \bP^1) = e\cdot  [\omega].
\end{align}
Similarly, for a sLag fibration with associated holomorphic $2$-form $\Omega$ we have
\begin{align}
	\label{eqn:vol_coh_slag}
	\Vol (X\xrightarrow{\pi} \bS^2) = e \cdot [\Omega].
\end{align}

In the case of a twistor family with twistor plane $P\subset \Lambda_\bR$, we have an orthogonal decomposition $\Lambda_\bR = P \oplus P^\perp$ and so $e = e_P \oplus e_{P^\perp}$.
If the elliptic and special Lagrangian fibrations correspond under \autoref{prop:hol_slag_corresp}, then both volumes are given by
\begin{align}
	\label{eqn:vol_universal}
	\Vol(X\xrightarrow{\pi}\bS^2=\bP^1) = e \cdot \frac{e_P}{\norm{e_P}} = \norm{e}_P
\end{align}
where $\norm{e}_{P}$ was defined in \autoref{ssec:results} by
\begin{align*}
  \norm{e}_P := \sup_{\substack{\kappa \in P ,\, \kappa^2=1}} \kappa \cdot e .
\end{align*}
Indeed, under the correspondence in \autoref{prop:hol_slag_corresp} the classes $[\omega]$ and $[\Omega]$ agree with $[\omega_t]$, where $t\in \bS^2(P)$ is given by $\frac{e_P}{\norm{e_P}}$.

\section{Counting}
\label{sec:counting}

This section contains the proof of \autoref{thm:main_homogeneous}, based on an equidistribution result established later in \autoref{sec:equidistribution}.
Although the statements are for a particular lattice $\Lambda_\bZ$ the methods extend with the same proof to a more general setting, see \autoref{sssec:generalizations}.
A standard reference for the notions of Lie theory used below is Bump \cite{Bump}.

After introducing the main objects and results, we connect the problem of counting isotropic vectors to counting intersections of a fixed subvariety with the orbit of another subvariety under the dynamics of a homogeneous flow.
The desired asymptotic count then follows from an equidistribution statement for the dynamics of the flow.

\subsection{Statements}

\subsubsection{Setup}
Keeping the notation as in the previous section, fix a lattice $\Lambda_\bZ$ isomorphic to the second cohomology of a K3 surface (see however \autoref{sssec:generalizations} for a more general statement).
Extending scalars to $\bR$ gives the vector space $\Lambda_\bR$ equipped with an inner product of signature $(3,19)$.

Fix a positive-definite $3$-plane $P\subset \Lambda_\bR$ with orthogonal complement $P^\perp$ giving a decomposition $\Lambda_\bR = P \oplus P^\perp$.
Using this decomposition, for any vector $v\in \Lambda_\bR$ denote by $v_{P}$ and $v_{P^\perp}$ the respective coordinates.
For a vector $v\in P$ denote its projection to $\bS(P)$, the unit sphere of $P$, by $\dot{v}$:
\begin{align}
\dot{v}:= \frac {v}{(v^2)^{1/2}} \textrm{ so that }(\dot{v})^2 = 1.
\end{align}
The same notation will apply to vectors in $P^\perp$ and to its unit sphere.

Denote the set of non-zero primitive integral isotropic vectors by $\Lambda_\bZ^0$ and pick an element $v$ in it.
Using the notation just introduced, we have $e = e_P + e_{P^\perp}$ and $ \dot{e_P} = \frac 1 {\norm{e}_{P}} e_P $, where the seminorm $\norm{-}_P$ was introduced in \autoref{ssec:results}.

The goal of this section is to prove the following theorem, which in the particular case of a constant weight function gives \autoref{thm:main_homogeneous}.
\begin{theorem}
	\label{thm:homogeneous_counting}
	Let $w:\bS^2(P)\to \bR$ be a smooth weight function on the unit sphere of $P$.
 	Then the following asymptotic formula holds as $ V\to \infty $:
	\begin{align}
	\label{eqn:homogeneous_asymptotic_poly}
		 \sum_{\substack{e \in \Lambda_\bZ^0\\ \norm{e}_{P} \leq V }} w(\dot{e}_P) = C \cdot \left( \int_{\bS^2}w \right) \cdot V ^{20} + O_w\left(V^{20 - \delta}\right).
	\end{align}
	The implied constant in the error term $O_w(-)$ depends on a Sobolev norm of the function $w$.
\end{theorem}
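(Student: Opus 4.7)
The plan is to follow the Eskin--McMullen and Benoist--Oh framework for counting $\Gamma$-orbits on affine homogeneous varieties, here specialized to the setting where the stabilizer is a maximal parabolic. First, I would invoke classical arithmetic results for even unimodular lattices of signature $(p,q)$ with $p,q \geq 2$ (Eichler's theorem, or equivalently strong approximation on the spin group) to conclude that all primitive integral isotropic vectors in $\Lambda_\bZ$ lie in a single orbit of $\Gamma := \operatorname{O}(\Lambda_\bZ)$. Fixing a base point $e_0 \in \Lambda_\bZ^0$ and setting $G := \operatorname{SO}(\Lambda_\bR)^\circ$, $H := \operatorname{Stab}_G(e_0)$, the subgroup $H$ is a maximal parabolic with Levi isogenous to $\operatorname{SO}(2,18)$ and abelian unipotent radical of dimension $20$; the punctured isotropic cone is identified with $G/H$, and $\Gamma_H := \Gamma \cap H$ is a lattice in $H$.

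Next, I would re-express the weighted sum as $\sum_{\gamma \in \Gamma/\Gamma_H} f_V(\gamma e_0)$ with $f_V(e) := w(\dot{e}_P)\,\mathbf{1}_{\|e\|_P \leq V}$, and form the periodization
\begin{equation*}
  F_V(\Gamma g) := \sum_{\gamma \in \Gamma/\Gamma_H} f_V(g^{-1}\gamma e_0) \in L^1(\Gamma \backslash G),
\end{equation*}
so that $F_V$ evaluated at the identity coset equals the quantity to be estimated. To convert this pointwise evaluation into an integral, I would pair $F_V$ against a nonnegative bump $\psi_\epsilon$ of support radius $\epsilon$ near the identity coset, incurring an approximation error controlled by the Lipschitz behaviour of $F_V$ across the boundary of $B_V := \{e \in G/H : \|e\|_P \leq V\}$, which scales like $V^{19}\epsilon$ (boundary area times thickness). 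Unfolding then rewrites the paired integral as an integral over $\Gamma_H \backslash G$ of $f_V(g^{-1}e_0)$ against a smoothed indicator of a fundamental domain.

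I would then evaluate this integral via the effective equidistribution of the $H$-orbits on $\Gamma \backslash G$ proved in \autoref{sec:equidistribution}. The main term arises from the $G$-invariant (Leray) measure $\mu$ on $G/H$: in the coordinates $e = r(u \oplus v)$ with $r = \|e\|_P$, $u \in \bS^2(P)$, $v \in \bS^{18}(P^\perp)$, a direct Leray computation gives $d\mu = \tfrac{1}{2}r^{19}\,dr\,du\,dv$, so that the weighted volume equals $\tfrac{V^{20}}{40}\cdot|\bS^{18}|\cdot\int_{\bS^2(P)}w$. This yields the leading coefficient $C = \tfrac{|\bS^{18}|}{40}\cdot\tfrac{\Vol(\Gamma_H \backslash H)}{\Vol(\Gamma \backslash G)}$, which is independent of $P$ since each factor is intrinsic to the arithmetic group. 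The same equidistribution input delivers the claimed spherical equidistribution of the directions $\dot{e}_P$ on $\bS^2(P)$.

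The main obstacle is producing a polynomial rate in the equidistribution step, which must be balanced against the Lipschitz loss to obtain the power saving $O(V^{20-\delta})$. Since $H$ is parabolic rather than semisimple, Howe--Moore does not apply directly; instead I would decompose along the Langlands factorization $H = L \ltimes N$ and combine effective mixing for the Cartan $A \subset L$ (coming from a spectral gap for $L^2_0(\Gamma \backslash G)$) with a wavefront-type lemma controlling directions transverse to the $H$-orbits. Optimizing the bump scale $\epsilon$ against the mixing rate yields a power saving, and the explicit exponent $\delta = \tfrac{4}{697633}$ of Bergeron--Matheus reflects the current best spectral gap for automorphic forms on arithmetic quotients of $\operatorname{SO}(3,19)$, together with the Sobolev losses incurred in the wavefront argument.
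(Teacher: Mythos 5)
Your proposal is correct in outline and runs on the same analytic engine as the paper --- effective equidistribution of translated $H_v$-orbits, obtained from quantitative mixing (spectral gap) together with a wavefront lemma and Benoist--Oh-style thickenings and cusp cutoffs --- but the combinatorial packaging is genuinely different. You follow the classical Duke--Rudnick--Sarnak/Eskin--McMullen route: periodize $f_V(e)=w(\dot{e}_P)\mathbf{1}_{\norm{e}_P\leq V}$ over $\rightquot{\Gamma}{\Gamma_v}$, pair with an $\epsilon$-bump at the identity coset, unfold to $\leftquot{\Gamma_v}{G}$, and extract the main term from the Leray measure on the cone $G/H$, handling the smoothing error by well-roundedness of the sets $B_V$. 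The paper never forms this periodized counting function: it regards $w$ as a $K_v$-invariant function on the compact group $K$, thickens it transversally to $K$ inside $G$ (\autoref{prop:w_epsilon}), identifies each primitive null vector with an intersection of an $a_t$-translate of $Y=\leftquot{\Gamma_v}{H_v}$ with $K$ at the hitting time $\exp(t(\gamma))=\norm{\gamma v}_P$ (\autoref{prop:inters_biject}), and then integrates the equidistribution statement for $Ya_t$ (\autoref{thm:quantitative_equidist}) over $t\leq T$ before optimizing the thickening scale. The two routes buy slightly different things: yours makes the leading constant transparent as the Leray volume of $B_V$ times $\Vol(\leftquot{\Gamma_v}{H_v})/\Vol(\leftquot{\Gamma}{G})$, consistent with the paper's $\tfrac{\Vol Y}{20\,\Vol X}$ up to normalization of measures; the paper's hitting-time formulation keeps the radial variable and the direction weight separated, so only equidistribution along the single family $Ya_t$ is ever invoked, whereas your unfolding implicitly needs the translates $Yg$ uniformly over the $K$-component of $g$ --- harmless, since the Sobolev data is $K$-uniform, but it should be said.

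Two corrections, neither fatal. First, the stabilizer $H$ of the isotropic \emph{vector} is not a maximal parabolic: it is the codimension-one unimodular subgroup of the parabolic stabilizing the isotropic \emph{line}, namely the semidirect product of (a group isogenous to) $\mathrm{SO}(2,18)$ with the $20$-dimensional abelian unipotent radical, the $\mathrm{GL}_1$-factor being absent. This is not mere terminology: a parabolic is not unimodular and contains no lattice, so your statements ``$H$ is a maximal parabolic'' and ``$\Gamma_H$ is a lattice in $H$'' are incompatible; it is precisely the unimodularity of the vector-stabilizer that makes the unfolding and the constant $\Vol(\Gamma_H\backslash H)$ meaningful, and the rest of your argument does use the correct group. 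Second, a group perturbation of size $\epsilon$ changes $\norm{e}_P$ multiplicatively, so the smoothing error is the count in the shell between $B_{(1-O(\epsilon))V}$ and $B_{(1+O(\epsilon))V}$, of size $O(\epsilon V^{20})$ rather than $O(\epsilon V^{19})$; this only shifts constants in the optimization of $\epsilon$ against the mixing rate and still yields the power saving.
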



\subsubsection{Generalizations}
    \label{sssec:generalizations}
The proof of \autoref{thm:homogeneous_counting} applies to a more general setting, with the following modifications.
The lattice $\Lambda$ need not be even and can have any signature $(p,q)$ with $p,q>1$ avoiding the reducible case $p=q=2$.
The exponent $20$ in the leading term should be then changed to $p+q-2$.

Additionally, the summation would be not over all primitive isotropic vectors, but only over a fixed $\Gamma$-orbit of such.
By a theorem of Borel--Harish-Chandra \cite{Borel_Harish_Chandra} the collection of all primitive isotropic vectors is divided into finitely many $\Gamma$-orbits.

\subsubsection{Volumes and Tamagawa numbers}
\label{sssec:tamagawa}
The constant $C$ in the main term of \autoref{eqn:homogeneous_asymptotic_poly} is of the form
\[
	C = \frac{\Vol Y}{20\cdot \Vol X} \in \frac 1 {\pi^{20} \cdot \zeta(11)} \cdot \bQ
\]
and involves the volumes of two locally homogeneous spaces $X,Y$ defined in \autoref{sssec:loc_homg_spaces}.
Up to rational factors, the volumes can be evaluated explicitly as follows (see \cite{Kneser} for a discussion).

First, find a $\bQ$-form of the relevant orthogonal groups and apply the following type of result:
\begin{align}
\label{eqn:product_formula}
	\Vol(\cG(\bR)/\Gamma ) \cdot \prod_p \Vol \cG(\bZ_p) = \tau(\cG)
\end{align}
where $\cG$ is a $\bQ$-algebraic group and $\tau(G)$ is the Tamagawa number of $\cG$ (an integer, typically $1$ or $2$).
The volumes are normalized using the Tamagawa measure, involving an explicit rational differential form.

The volumes of $\cG(\bZ_p)$ equal $p^{-\dim G}\cdot \# \cG(\bF_p)$ for all sufficiently\footnote{in fact, all $p$ since our quadratic form is unimodular} large primes $p$.
The number of points of the orthogonal groups over finite fields can be found in \cite[\S~1.4]{Carter} from which it follows that for an orthogonal group of rank $2l$ the volumes can be expressed in terms of Riemann zeta values, up to rational factors:
\[
	\prod_p \Vol \cG(\bZ_p) \approx \left( \zeta(2)\cdots \zeta(2l-2) \cdot \zeta(l) \right)^{-1}
\]
The sought-after constant $C$ is the ratio of two such volumes, for orthogonal groups of rank $22$ and $20$ respectively, so up to rational factors
\begin{align}
	C \approx (\pi^{20} \cdot \zeta(11))^{-1}
\end{align}
where even zeta values are evaluated explicitly in terms of $\pi$ (and Bernoulli numbers).
Note that the group defining $Y$ is not orthogonal, but rather an extension of an orthogonal group by a unipotent, but the unipotent part will only contribute rational factors to the volume.
To determine the exact rational factor of $C$, one would have to follow through all the normalizations involved in the definition of the Tamagawa measure and the measures in this text, as well as the isogeny classes of orthogonal groups involved.

\subsubsection{Polynomial vs. Exponential formulation}
The proof of \autoref{thm:homogeneous_counting} will involve a flow on a homogeneous space, with time parameter $ t\in \bR $.
Because of the appearance of exponentials, to avoid ambiguity primitive null vectors will be denoted by $v$ instead of $e$ from now on.
Associated to each null vector $ v $ there will be a ``hitting time'' $ t(v) $ (see \autoref{sssec:hitting_time}) related to the seminorm by
\begin{align*}
\exp{(t(v))} = \norm{v}_P.
\end{align*}
In exponential form, the counting in \autoref{eqn:homogeneous_asymptotic_poly} can be rewritten (using the change of variables $V=e^T$) as
\begin{align}
\label{eqn:homogeneous_asymptotic_exp}
\sum_{\substack{v \in \Lambda_\bZ^0\\ t(v) \leq T }} w(\dot{v}_P) = C \cdot \left( \int_{\bS^2}w \right) \cdot e^{20\cdot T}+ O_w\left(e^{(20 - \delta)\cdot T}\right).
\end{align}

\subsubsection{Proof sketch}
We will consider a homogeneous space $ X = \leftquot{\Gamma}{G} $ and a homogeneous subspace $ Y \subset X $ (see \autoref{ssec:preliminaries} for more precise notation).
For a $ 1 $-parameter homogeneous flow $ a_t $ on $ X $, the translates of $ Y $ will quantitatively equidistribute in $ X $; this is established in \autoref{sec:equidistribution} below, but will be assumed in this section.

Inside $ X $ there will be a compact subset $ K $ (the image of the maximal compact in $ G $) and furthermore a compact subgroup $ K_v\subset K $ with $ \dim K + \dim Y - \dim K_v + 1 = \dim X $.
The times $ t $ when the $ a_t $-translates of $ Y $ intersect $ K $ will correspond to $ t $'s in \autoref{eqn:homogeneous_asymptotic_exp} which occur in the summation, and the intersection $ a_t Y \cap K $ will be along a $ K_v $-coset inside $ K $.
The function $ w $ that appears in the count can be thickened to a function on $ X $ supported near $ K $, and the quantitative equidistribution of the $ Y $-translates will give the desired count.

The equidistribution results proved below extend to functions $ w:\bS^2(P)\times \bS^{18}(P^\perp)\to \bR $ sampled at $\dot{v}_P$ and $\dot{v}_{P^\perp}$.
However, $ v_{P^\perp} $ does not seem to have a geometric interpretation in the setting of K3 surfaces so we do not pursue this direction.

\subsection{Some preliminaries}
\label{ssec:preliminaries}

\subsubsection{Groups}
\label{sssec:groups}
Denote the orthogonal groups by
\begin{equation}
\begin{split}
& G:=\Orthog(\Lambda_\bR) \textrm{ and }\Gamma:=\Orthog(\Lambda_\bZ)\\
& K:=\Orthog(P)\times \Orthog(P^\perp) \textrm{ maximal compact in }G.
\end{split}
\end{equation}
Fix a primitive integral isotropic vector $v\in \Lambda^0_\bZ$ with corresponding stabilizers
\begin{equation}
\begin{split}
& H_v:=\Stab_{G}(v) \textrm{ and }\Gamma_v:=\Stab_{\Gamma}(v)\\
& K_v:=\Stab_{K}(v) \textrm{ maximal compact in } H.
\end{split}
\end{equation}
Note that $H_v$ is a $\bQ$-subgroup of $G$ and $\Gamma_v$ is a lattice inside it.

\subsubsection{Locally homogeneous spaces}
\label{sssec:loc_homg_spaces}
With the notation as in \autoref{sssec:groups}, define the following quotient spaces:
\begin{equation*}
\begin{split}
X &:= \leftquot{\Gamma}{G}\\
X_v &:= \leftquot{\Gamma_v}{G}\\
Y &:= \leftquot{\Gamma_v}{H_v}
\end{split}
\qquad \qquad
\begin{tikzcd}
Y \arrow[r, hook] \arrow[rd, hook] & X_v \arrow[d] \\
& X
\end{tikzcd}
\end{equation*}
The quotients $Y$ and $X$ have finite volume, whereas $X_v$ is infinite-volume.
Note that $X$ and $X_v$ carry a right $G$-action.

\subsubsection{Sphere parametrization}
\label{sssec:sphere_param}
Using the compact groups $K\supset K_v$ yields a natural parametrization of the unit spheres:
\begin{align}
\begin{split}
\rightquot{K}{K_v} & \leftrightarrow \bS(P)\times \bS(P^\perp)\\
[k] & \mapsto [k]\cdot \left( \dot{v}_{P} , \dot{v}_{P^\perp} \right).
\end{split}
\end{align}

\subsubsection{Primitive null vector parametrization}
\label{sssec:intg_null_param}
Any primitive null vector $w\in \Lambda_\bZ^0$ is in the orbit of $v$ under $\Gamma$.
One way to see this is to invoke a theorem of Eichler \cite[\S 10.4]{Eichler} which implies that two primitive vectors of the same length in an even unimodular lattice $ \Lambda $ are related by an orthogonal transformation, provided the lattice contains two hyperbolic planes.
In the case at hand, a simple direct proof is possible; I am grateful to the anonymous referee and Curt McMullen for pointing out the following argument.

Let $w\in \Lambda^0_\bZ$ be a primitive null vector; there exists some $x\in \Lambda_\bZ$ such that $x\cdot w = 1$ since $w$ is primitive, and $x^2=2k$ with $k\in \bZ$ since $\Lambda$ is an even lattice.
Now the vectors $w, x_1:=x-kw$ span a hyperbolic plane $W\subset \Lambda_\bZ$, i.e. $w^2={x_1}^2=0$ and $w\cdot x_1 = 1$.
The orthogonal complement $W^\perp$ is an even unimodular lattice of signature $(2,18)$ and hence unique up to isomorphism.
Applying the same argument to another null vector $w_1$ produces another decomposition
\[
 	W_1 \oplus W_1^\perp = \Lambda = W \oplus W^\perp.
 \] 
By the uniqueness of even unimodular lattices of indefinite signature, there is an isomorphism $W_1^\perp \to W^\perp$ and there is also an isomorphism of hyperbolic planes $W_1 \to W$ taking $w$ to $w_1$.
This gives an automorphism of $\Lambda$ which takes $w$ to $w_1$.

In conclusion, the $\Gamma$-orbit of $v$ gives a natural parametrization of the primitive integral vectors:
\begin{align*}
\begin{split}
\rightquot{\Gamma}{\Gamma_v} & \leftrightarrow \Lambda_\bZ^0\\
[\gamma] &\mapsto [\gamma]\cdot v.
\end{split}
\end{align*}

\subsubsection{Cosets}
Each null vector $f\in \Lambda_\bR$ gives a point on each of the unit spheres $(\dot{f}_{P}, \dot{f}_{P^\perp})\in \bS(P)\times \bS(P^\perp)$.
Using \autoref{sssec:sphere_param}, this is the same as a right $K_v$-coset $C_f\subset K$.
Since by \autoref{sssec:intg_null_param} each integral primitive null vector is of the form $[\gamma]v$ for $[\gamma]\in \rightquot{\Gamma}{\Gamma_v}$, denote by $C_{[\gamma]}\subset K$ the corresponding right $K_v$-coset.

\subsection{Some dynamics}

\subsubsection{The key one-parameter subgroup}
Associated to $v$ and $P$ is another isotropic vector $v'$ defined by
\begin{align*}
\begin{split}
v & = v_{P} \oplus v_{P^\perp}\\
v' & = v_{P} \oplus -v_{P^\perp}.
\end{split}
\end{align*}
Define the one-parameter subgroup $a_t\subset G$ by
\begin{align}
\label{eqn:a_t_def}
\begin{split}
a_t \cdot v  &= \exp({-t}) \cdot v\\
a_t \cdot v' &= \exp(t)\cdot v'
\end{split}
\end{align}
and acting as the identity on $ (v\oplus v')^\perp $.

\subsubsection{Hitting time}
\label{sssec:hitting_time}
The vectors $C_{[\gamma]} v$ and $[\gamma]v$ have proportional coordinates in $P$ and $P^\perp$.
Define $t([\gamma])\in \bR$ by the requirement
\begin{align}
\nonumber e^{2\cdot t([\gamma])} (v)_{P}^2 & = \left( [\gamma]v \right)_P^2
\intertext{so that we have the identity}
\label{eqn:C_gamma_a}
C_{[\gamma]}a_{-t([\gamma])} v &= [\gamma] v.
\end{align}
Indeed, both the left and right-hand side of \autoref{eqn:C_gamma_a} have proportional coordinates in $P$ and $P^\perp$ and the action of $a_\bullet$ ensures that the lengths of the coordinates also agree.

For the next proposition recall that $H_v$ is the stabilizer of $v$ in $G$.
\begin{proposition}
	\label{prop:C_gamma_inters}
	We have the equality of subsets of $G$:
	\begin{align*}
	C_{[\gamma]} = K \cap \left( [\gamma] H_v a_{t([\gamma])} \right).
	\end{align*}
\end{proposition}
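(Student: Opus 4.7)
The plan is to verify both set containments by translating the defining equation \autoref{eqn:C_gamma_a} into a statement about group elements. The identity $C_{[\gamma]} a_{-t([\gamma])} v = [\gamma]v$ says that every representative $k$ of the coset $C_{[\gamma]}$ sends $a_{-t([\gamma])} v$ to $[\gamma] v$. Rearranging, $[\gamma]^{-1} k a_{-t([\gamma])}$ fixes $v$ and therefore lies in $H_v = \Stab_G(v)$. Since $C_{[\gamma]} \subseteq K$ by construction, this yields the forward inclusion $C_{[\gamma]} \subseteq K \cap [\gamma] H_v a_{t([\gamma])}$.

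For the reverse inclusion, I would take $k \in K$ with $k = [\gamma] h a_{t([\gamma])}$ for some $h \in H_v$, and use the eigenvalue relation in \autoref{eqn:a_t_def} together with $hv = v$ to compute
\[
kv \;=\; [\gamma] h\, a_{t([\gamma])} v \;=\; e^{-t([\gamma])} [\gamma] v.
\]
Now fix any $k_0 \in C_{[\gamma]}$; by the forward inclusion just established, $k_0$ is itself of the form $[\gamma] h_0 a_{t([\gamma])}$, so the same calculation gives $k_0 v = e^{-t([\gamma])} [\gamma] v$. Thus $k_0^{-1} k$ fixes $v$ and belongs to $K$, hence lies in $K \cap H_v = K_v$, giving $k \in k_0 K_v = C_{[\gamma]}$.

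The content of the proposition is essentially formal: once the defining identity \autoref{eqn:C_gamma_a} and the eigenvalue relation $a_t v = e^{-t} v$ are in hand, the rest is group-theoretic bookkeeping. The only point requiring a moment's attention is the identification $K \cap H_v = K_v$, which is immediate from the definition $K_v = \Stab_K(v)$ in \autoref{sssec:groups}. I do not expect a genuine obstacle; this proposition is the clean algebraic bridge that will later allow the summation over $[\gamma] \in \rightquot{\Gamma}{\Gamma_v}$ in \autoref{eqn:homogeneous_asymptotic_exp} to be reinterpreted as a count of intersections between the $a_t$-translates of $Y$ and the compact image of $K$ inside $X$.
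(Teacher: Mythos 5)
Your proof is correct and takes essentially the same route as the paper: the forward inclusion is the same rearrangement of \eqref{eqn:C_gamma_a}, and the reverse inclusion rests on the same computation $[\gamma]h\,a_{t([\gamma])}v = e^{-t([\gamma])}[\gamma]v$. The only cosmetic difference is that you finish by comparing with a fixed $k_0\in C_{[\gamma]}$ and using $K\cap H_v=K_v$, whereas the paper appeals directly to the sphere-parametrization definition of $C_{[\gamma]}$ (elements of $K$ preserve the $P$ and $P^\perp$ lengths); these amount to the same observation.
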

\begin{proof}
	Rewrite the identity \eqref{eqn:C_gamma_a} as
	\begin{align*}
	[\gamma]^{-1} C_{[\gamma]} \cdot a_{-t([\gamma])} v =  v
	\end{align*}
	so that
	\begin{align*}
	[\gamma]^{-1} C_{[\gamma]} \cdot a_{-t([\gamma])} \subset H_v
	\textrm{ or equivalently }
	C_{[\gamma]} \subset [\gamma] H_v a_{t([\gamma])}.
	\end{align*}
	By construction $C_{[\gamma]}\subset K$ so one direction of the inclusion follows.

	For the reverse inclusion, note that any element of $[\gamma] H_v a_{t([\gamma])}$ takes $v$ to a vector proportional to $[\gamma]v$, but preserving the lengths of the $P$ and $P^\perp$ coordinates.
	Requiring that it is additionally in $K$ implies that it is contained in $C_{[\gamma]}$.
\end{proof}

\subsubsection{Summary}
\label{sssec:gamma_datum}
Every $[\gamma]\in \rightquot{\Gamma}{\Gamma_v}$ has the following data attached:
\begin{enumerate}
	\item A primitive integral null vector $ \gamma \cdot v\in   \Lambda^0_\bZ$.
	\item A right $K_v$-coset $C_{[\gamma]}\subset K$ whose action on $v$ aligns its $P\oplus P^\perp$ coordinates with those of $[\gamma]v$.
	\item A scalar $t([\gamma])\in \bR$ equal to the log of the ratio of lengths of the $P$ coordinates of $v$ and $[\gamma]v$.
	\item By \autoref{prop:C_gamma_inters} we have the equality of sets
	\begin{align}
	\label{eqn:C_gamma_inters}
	C_{[\gamma]} = K \cap \left( [\gamma] H_v a_{t([\gamma])} \right)
	\end{align}
\end{enumerate}
Any one of the following uniquely determines the other: the coset $[\gamma]$, the primitive null vector $[\gamma]v$, the right $K_v$-coset $C_{[\gamma]}$.
The only non-trivial claim to check is that if two primitive null vectors $v_1,v_2$ have positively proportional projections to $P$ and $P^\perp$ (this determines the cosets $C_{[\gamma_i]} $) then they are in fact equal.
But since $v_{i,P}^2 = - v_{i,P^\perp}^2 $ it follows that their proportionality factors are the same, so $v_1 = \lambda v_2$, which contradicts the primitivity of the $v_i$, unless they are equal.

\subsection{Natural measures and Intersections}
\label{ssec:natl_msrs}

For the equidistribution and counting results used below, we will need to fix appropriate measures on the homogeneous spaces.
This section describes the normalizations.
Additionally, we describe the intersections of the translates $ Y \cdot a_t \subset X$ with the compact group $ K\subset H $.

\subsubsection{Constant-volume measure}
\label{sssec:cont_vol_msr}
The coset $Y a_t \subset X$ carries two natural measures.
The first one, denoted $\mu_{Y a_t}$, comes from pushing forward the Haar measure on $Y$ via $a_t$:
\begin{align*}
\begin{split}
(-)\cdot a_t\colon Y &\to Y a_t\\
\mu_Y &\mapsto \mu_{Y a_t}
\end{split}
\end{align*}
In particular, the total volume of $\mu_{Y a_t}$ is independent of $t$.

\subsubsection{Homogeneous measure}
\label{sssec:homg_msr}
The second measure, denoted $\mu_{Y_t}$, is defined as the induced measure on $ Y a_t$ viewed as a homogeneous space under $H_v$.
Indeed, the right action of $H_v$ on $Y a_t$ gives the identification
\begin{align*}
Y a_t \cong \leftquot{a_{-t}\Gamma_v a_t }{H_v}.
\end{align*}

\begin{proposition}
	\label{prop:msr_scaling}
	The two measures are related by a scaling factor:
	\begin{align*}
	\mu_{Y a_t} \cdot e^{20\cdot t} = \mu_{Y_t}
	\end{align*}
\end{proposition}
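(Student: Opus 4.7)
The plan is to trace both measures $\mu_{Y a_t}$ and $\mu_{Y_t}$ back to Haar measure on $H_v$ via two different parameterizations of $Y a_t$, and then to compute the modular character by which conjugation by $a_t$ distorts Haar measure on $H_v$.

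First I would observe that $a_t$ normalizes $H_v$: for $h\in H_v$ one computes $(a_t h a_t^{-1})v = a_t h (e^t v) = e^t a_t v = v$, so $\phi_t := \mathrm{Ad}(a_t)$ restricts to an automorphism of $H_v$. This lets me compare two different descriptions of $Y a_t$ as an $H_v$-homogeneous space. The first comes directly from $R_{a_t}\colon Y\to Y a_t$: writing $T(h):=\Gamma h a_t$, the map $T$ descends to the bijection $Y\to Y a_t$ coming from $R_{a_t}$, and pushing Haar on $H_v$ forward by $T$ yields $\mu_{Y a_t}$. The second comes from the identification $Y a_t \cong \leftquot{a_{-t}\Gamma_v a_t}{H_v}$: writing $S(h):=\Gamma a_t h$, pushing Haar forward by $S$ gives $\mu_{Y_t}$. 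The key identity is
\[
  S(h) \;=\; \Gamma a_t h \;=\; \Gamma\bigl(a_t h a_t^{-1}\bigr)a_t \;=\; T(\phi_t(h)),
\]
so $S = T\circ\phi_t$ and therefore $\mu_{Y_t} = T_*(\phi_t)_*\mu_{H_v}$. The claim $\mu_{Y a_t}\cdot e^{20t}=\mu_{Y_t}$ thus reduces to the modular computation $(\phi_t)_*\mu_{H_v} = e^{20t}\mu_{H_v}$.

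To pin down this modular factor I would use the Levi decomposition of $H_v$. Since $H_v$ is the stabilizer of the null vector $v$, it sits inside the parabolic $P_v$ stabilizing the line $\bR v$ with the $\GL_1$-factor killed; explicitly $H_v = L\ltimes N$ where $L\cong \Orthog(v^\perp/\bR v)\cong \Orthog(2,18)$ and $N$ is the abelian unipotent radical, of dimension $3+19-2=20$. Decomposing $\mathfrak{g}=\mathfrak{o}(3,19)$ into $\mathrm{Ad}(a_t)$-weight spaces (with $a_t$ acting with weights $-1,0,+1$ on $\Lambda_\bR = \bR v\oplus W\oplus \bR v'$), the Levi $L$ lies in weight $0$ and $\mathfrak{n}$ lies in weight $-1$. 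Consequently $\phi_t$ fixes $L$ pointwise and acts on $N\cong \bR^{20}$ by multiplication by $e^{-t}$. The Levi is semisimple hence unimodular, and it acts on $N$ by isometries of the quadratic form so it preserves Haar on $N$; in particular Haar on $H_v$ decomposes as a product, and
\[
  (\phi_t)_*\mu_{H_v} \;=\; \mu_L\otimes (\phi_t\vert_N)_*\mu_N \;=\; e^{20t}\mu_{H_v},
\]
since the $20$-dimensional contraction by $e^{-t}$ pushes Lebesgue forward by $e^{20t}$. Combining this with the identity $\mu_{Y_t}=T_*(\phi_t)_*\mu_{H_v}$ from the previous paragraph gives $\mu_{Y_t}=e^{20t}\mu_{Y a_t}$, as required.

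The main technical point is the weight computation for the action of $\mathrm{Ad}(a_t)$ on $\mathfrak{n}$; once one correctly identifies $\mathfrak{n}$ as the weight $-1$ subspace (of dimension $p+q-2 = 20$) the exponent follows immediately, and the rest is bookkeeping in comparing the two $H_v$-parameterizations of $Y a_t$. I expect no further obstacle, and in the more general setting of \autoref{sssec:generalizations} the same argument replaces $20$ by $p+q-2$.
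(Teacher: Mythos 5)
Your proof is correct and follows essentially the same route as the paper: both reduce the comparison of $\mu_{Y a_t}$ and $\mu_{Y_t}$ to the Jacobian of conjugation by $a_{\pm t}$ on $H_v$, which is trivial on the Levi $\Orthog(2,18)$ and scales the $20$-dimensional abelian unipotent radical, giving the factor $e^{20t}$. Your write-up merely makes explicit the two $H_v$-parameterizations of $Y a_t$ and the sign bookkeeping (contraction by $e^{-t}$ on $\frakn$, hence push-forward by $e^{20t}$), which the paper's shorter proof leaves implicit.
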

\begin{proof}
	Consider the conjugation actions
	\begin{align}
	\begin{split}
	\Ad_{a_t}:H_v & \to H_v\\
	h & \mapsto a_{-t} h a_t
	\end{split}
	\begin{split}
	& \ad_{a_t}:\frakh_v \to \frakh_v\\
	& \textrm{on the Lie algebra.}
	\end{split}
	\end{align}
	Looking at the tangent space to the identity in $H_v$, it follows that
	\begin{align}
	\mu_{Y a_t} \cdot \det\left(\ad_{a_t}\right) = \mu_{Y_t}
	\end{align}
	But the adjoint action of $ a_t $ on $ H_v $ preserves the semisimple part and expands the unipotent part by a factor of $e^t$ (see also \autoref{ssec:Lie_alg_dec}).
	The dimension of the unipotent part is $ 20 $, so the determinant is $ e^{20\cdot t} $.
\end{proof}

\subsubsection{Intersections}
Consider the set $\Gamma\cdot K\subset G$ and its projection to $X_v$, denoted $\leftquot{\Gamma_v}{\Gamma}\cdot 	K\subset X_v$.
The following is a reformulation of \autoref{eqn:C_gamma_inters}.
\begin{proposition}
	\label{prop:inters_biject}
 The cosets $[\gamma]\in \rightquot{\Gamma}{\Gamma_v}$ are in bijection with intersections between $(\leftquot{\Gamma_v}{\Gamma})\cdot K$ and translates $Y\cdot a_t$ inside $X_v$.
 The intersection occurs at $t=t([\gamma])$ and consists of the set $[\gamma]^{-1}C_{[\gamma]}$.

 Equivalently, projecting $X_v$ down to $X$, the bijection is between cosets $[\gamma]$ and intersections of $Y \cdot a_t$ with $K\subset X = \leftquot{\Gamma}{G}$.
\end{proposition}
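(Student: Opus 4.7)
The proposition is a direct reformulation of \eqref{eqn:C_gamma_inters}, and my plan is to mechanically unpack both sides in the quotient $X_v = \leftquot{\Gamma_v}{G}$.

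For the forward direction, given $[\gamma] \in \rightquot{\Gamma}{\Gamma_v}$ with representative $\gamma$, I would rewrite \eqref{eqn:C_gamma_inters} as
\[
\gamma^{-1} C_{[\gamma]} \;=\; \gamma^{-1} K \,\cap\, H_v a_{t([\gamma])}.
\]
The containment in $\gamma^{-1} K$ shows the image in $X_v$ lies in $(\leftquot{\Gamma_v}{\Gamma}) \cdot K$ (since $\gamma^{-1} \in \Gamma$); the containment in $H_v a_{t([\gamma])}$ shows it lies in $Y \cdot a_{t([\gamma])}$ (since $H_v$ projects onto $Y$). Changing representative $\gamma \mapsto \gamma \gamma_v$ with $\gamma_v \in \Gamma_v \subset H_v$ leaves $C_{[\gamma]}$ invariant and alters $\gamma^{-1} C_{[\gamma]}$ only by a $\Gamma_v$-left translate, absorbed by the quotient; so the association $[\gamma] \mapsto [\gamma]^{-1} C_{[\gamma]} \bmod \Gamma_v$ is well-defined.

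For surjectivity, an arbitrary intersection point in $X_v$ lifts to $g \in G$ admitting two decompositions $g = h a_t = \gamma k$ with $h \in H_v$, $\gamma \in \Gamma$, $k \in K$. The parameter $t$ is uniquely determined by $g$ because $H_v \cap \{a_s : s \in \bR\} = \{e\}$. Applying $g^{-1}$ to the distinguished null vector $v$ gives
\[
e^t v \;=\; a_{-t} h^{-1} v \;=\; g^{-1} v \;=\; k^{-1} \gamma^{-1} v,
\]
hence $\gamma^{-1} v = e^t \cdot k v$. Since $k \in K = \Orthog(P) \times \Orthog(P^\perp)$ preserves $P$-length and $P^\perp$-length separately, comparing $P$-norms forces $t = t([\gamma^{-1}])$ in the sense of \autoref{sssec:hitting_time}, and $k$ must carry the $\bS(P) \times \bS(P^\perp)$-projections of $v$ onto those of $\gamma^{-1} v$, so $k \in C_{[\gamma^{-1}]}$. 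Consequently $g \in \gamma \cdot C_{[\gamma^{-1}]} = [\gamma^{-1}]^{-1} C_{[\gamma^{-1}]}$, so the intersection point arises from the coset $[\gamma^{-1}] \in \rightquot{\Gamma}{\Gamma_v}$.

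For injectivity, I would invoke \autoref{sssec:gamma_datum}: the coset $[\gamma]$, the primitive null vector $\gamma v$, and the $K_v$-coset $C_{[\gamma]}$ all mutually determine one another, so distinct cosets produce distinct $C_{[\gamma]}$ and hence, by the surjectivity analysis, distinct intersection subsets in $X_v$. The equivalent statement after projecting $X_v \to X = \leftquot{\Gamma}{G}$ follows by the same argument, with $\Gamma$ in place of $\Gamma_v$ in the quotient. The one subtlety to watch is the bookkeeping between right cosets $\rightquot{\Gamma}{\Gamma_v}$ (which parametrize primitive null vectors and appear in the proposition) and left cosets $\leftquot{\Gamma_v}{\Gamma}$ (which appear in the set $(\leftquot{\Gamma_v}{\Gamma}) \cdot K$); the factor $[\gamma]^{-1}$ in the intersection formula is precisely the inversion relating the two. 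Apart from this, the argument is a routine unpacking of \eqref{eqn:C_gamma_inters}.
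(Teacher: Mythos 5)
Your route is essentially the paper's: the published proof consists of applying $[\gamma]^{-1}$ to \autoref{eqn:C_gamma_inters}, and your forward inclusion, the well-definedness under $\gamma\mapsto\gamma\gamma_v$, and the recovery of $t=t([\gamma^{-1}])$ and $k\in C_{[\gamma^{-1}]}$ from a lift $g=ha_t=\gamma k$ are a correct, more detailed unpacking of that identity (the paper simply calls this immediate).

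The one step that does not hold as literally stated is the injectivity inference ``distinct cosets give distinct $C_{[\gamma]}$, hence distinct intersection subsets in $X_v$.'' Applying $[\gamma]^{-1}$ to \autoref{eqn:C_gamma_inters} gives $[\gamma]^{-1}C_{[\gamma]}=\gamma^{-1}K\cap H_v a_{t([\gamma])}$ as a subset of $G$; since $-\id\in\Gamma\cap K$ but $-\id\notin\Gamma_v$, the two \emph{distinct} cosets $[\gamma]$ and $[\gamma\cdot(-\id)]$ satisfy $t([\gamma])=t([\gamma\cdot(-\id)])$ and yield the \emph{identical} subset $\gamma^{-1}K\cap H_v a_{t([\gamma])}$ of $X_v$ (the two factors of $-\id$ cancel because $-K=K$), even though $C_{[\gamma]}$ and $C_{[\gamma\cdot(-\id)]}$ are different $K_v$-cosets of $K$. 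The same ambiguity appears in your surjectivity step: the decomposition $g=\gamma k$ is unique only up to $\Gamma\cap K$, so the coset $[\gamma^{-1}]$ you attach to an intersection point depends on that choice. This is exactly the caveat of \autoref{rmk:genericity_K}: the bijection must be understood with multiplicity, i.e.\ with intersections labeled by the pair $\left(t([\gamma]),\,C_{[\gamma]}\right)$ rather than by bare subsets of $X_v$; with that reading, \autoref{sssec:gamma_datum} does give injectivity, and this is the form in which the statement is actually used in the proof of \autoref{prop:comparison}. So your argument needs only this reformulation of what ``intersection'' means, not a new idea.
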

\begin{proof}
 The equivalence of the two statements is immediate.
 To see the first statement, apply $[\gamma]^{-1}$ to \autoref{eqn:C_gamma_inters}.
\end{proof}

\begin{remark}
\label{rmk:genericity_K}
In \autoref{prop:inters_biject} the intersections have to be understood with multiplicity.
For a fixed time $t$ there could be several $[\gamma]$ for which $t=t([\gamma])$, but they all give rise to different cosets $C_{[\gamma]}$.
\end{remark}

\subsection{Decompositions of the Lie algebra}
\label{ssec:Lie_alg_dec}
This section contains the necessary Lie algebra decompositions for subsequent arguments.
The maximal compact $K\subset G$ gives a Cartan involution $\theta$ on $G$ and on $\frakg:=\Lie G$.
A subalgebra of $\frakg$ is \emph{symmetric} if it is $\theta$-invariant.

The one-parameter subgroup $a_t$ defined in \eqref{eqn:a_t_def} is symmetric.
Let $\fraka$ denote its Lie algebra, which is $1$-dimensional, and let $\fraka^+\subset \fraka$ be the fixed positive Weyl chamber, corresponding to the positive values of the parameter $t$ in \eqref{eqn:a_t_def}.

\subsubsection{Weight decomposition}
\label{sssec:weight_dec}
The adjoint action of $\fraka$ on $\frakg$ has the weights $\{-1,0, +1\}$ and gives the decomposition
\begin{align*}
 \frakg = \frakn^- \oplus \left( \frakm \oplus \fraka \right) \oplus \frakn^+.
\end{align*}
The subalgebra $ \frakm $ is picked so that it is orthogonal (for the Killing form) to $ \fraka $.
The involution $\theta$ exchanges $\frakn^+$ and $\frakn^-$.

\subsubsection{Reductive part}
Note that $\frakm$ itself decomposes:
\begin{align*}
 \frakm = \frakk_v \oplus \frakp_\frakm \textrm{ where }\frakk_v = \frakm \cap \frakk = \Lie K_v \textrm{ and }\theta\vert_{\frakp_\frakm} = -\id.
\end{align*}

\subsubsection{Horospherical part}
\label{sssec:He_dec}
Recall that $H_v:=\Stab_{v}G$.
Then its Lie algebra is:
\begin{align}
\label{eqn:lie_h0_dec}
 \Lie H_v = \frakh_v = \frakm \oplus \frakn^+ = \left(\frakk_v \oplus \frakp_\frakm\right) \oplus \frakn^+.
\end{align}

\subsubsection{A transverse to the compact part}
\label{sssec:lie_transv_cpct}
The following decomposition will be useful when constructing a thickening of the compact subgroup $K\subset G$:
\begin{align}
\label{eqn:lie_transv_cpct}
 \frakg = \frakk \oplus \left(\frakp_\frakm \oplus \frakn^+ \right) \oplus \fraka.
\end{align}
Indeed, this follows from the more refined composition
\begin{align}
\label{eqn:lie_transv_cpct_refined}
 \frakg & = \frakk^c \oplus \left(\overbrace{\underbrace{\frakk_v \oplus \frakp_\frakm}_{\frakm} \oplus \frakn^+}^{\frakh_v} \right) \oplus \fraka\\
\intertext{where $\frakk = \frakk^c \oplus \frakk_v$, $ \Lie K_v = \frakk_v $ and}
 \nonumber \frakk^c & = \{ x\oplus -\theta(x) \in \frakn^-\oplus \frakn^+ \,\vert\, \forall x\in \frakn^- \}.
\end{align}
This shows that locally on $ G $, the right $ a_t $-translates of $ H_v $ intersect $ K $ along right $ K_v $-cosets.

\subsubsection{Norms}
The Cartan-Killing pairing on $\frakg$ is indefinite.
However, the choice of a maximal compact $K$ (equivalently, of Cartan involution $\theta$) determines a natural metric on $\frakg$.
In particular, this determines natural volume forms on all subalgebras.

\subsubsection{Thickening of $ K $}

The next construction provides a good thickening of functions on the compact $K\subset G$, with control on the Sobolev norm.
Sobolev norms on the non-compact space $ X $ are defined in \autoref{ssec:spectral_th}, but since $ K $ is compact and the thickening will be supported in the $ \epsilon $-neighborhood of $ K $, the usual definitions of Sobolev norms apply.

\begin{proposition}
\label{prop:w_epsilon}
 Let $w\colon K\to \bR$ be a smooth function.
 For any sufficiently small $\epsilon>0$ there exists a function $w_\epsilon \colon G\to \bR$ with the following properties:
 \begin{enumerate}
  \item The support of $w_\epsilon$ is in the $\epsilon$-neighborhood of $K$ and the masses of $ w $ and $ w_\epsilon $ agree:
  \begin{align}
   \int_{G} w_\epsilon \, d\mu_G = \int_K w\, d\mu_K.
  \end{align}
  \item The $ l $-th Sobolev norm of  $w_\epsilon$ satisfies the following bound, for some constant $ d_l $ depending on $ l $ and implicit constant depending on $ w $:
  \begin{align}
  \label{eqn:sobolev_bd_cpct_thick}
   \norm{w_\epsilon}_l \ll_{w} \left(\epsilon\right)^{-d_l}.
  \end{align}
  \item Fix a right  $ K_v $-coset $kK_v\subset K$ and consider the family of orbits $kH_v \cdot a_t\subset G$ with $t\in(-\epsilon,\epsilon)$ (recall that $K_v\subset H_v$, see also \eqref{eqn:lie_transv_cpct_refined}).
  We then have the equality of integrals:
  \begin{align}
  \label{eqn:w_eps_coset_average}
   \int\displaylimits_{-\epsilon}^\epsilon\! dt \! \left(\, \int\displaylimits_{k H_v \cdot a_t}\! w_\epsilon \, d\mu_{H_t}\right) = \int\displaylimits_{kK_v}\! w \, d\mu_{K_v}
  \end{align}
  Here $d\mu_{H_t}$ denotes the measure on $H_v\cdot a_t$ when viewed as a homogeneous $H_v$-orbit (see \autoref{sssec:homg_msr} and \autoref{prop:msr_scaling}).
 \end{enumerate}
\end{proposition}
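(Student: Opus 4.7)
The plan is to use the refined Lie algebra decomposition \eqref{eqn:lie_transv_cpct_refined} to set up a tubular neighborhood of $K$ in $G$ and to take $w_\epsilon$ to be the product of $w$ (on the compact direction), a bump in the transverse direction, and a Jacobian correction. Writing $\frakh_v^- := \frakp_\frakm \oplus \frakn^+$, so that $\frakh_v = \frakk_v \oplus \frakh_v^-$ and $\frakg = \frakk \oplus \frakh_v^- \oplus \fraka$, the map $(k, X, t) \mapsto k \exp(X) a_t$ is a diffeomorphism from $K \times B_{\epsilon_0}(\frakh_v^-) \times (-\epsilon_0, \epsilon_0)$ onto an open tubular neighborhood of $K$ in $G$ for a suitable $\epsilon_0 > 0$. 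In these coordinates the Haar measure $d\mu_G$ pulls back to $J(X, t)\, dk\, dX\, dt$ with $J$ smooth and $J(0, 0) = 1$. For each $k \in K$ the orbit surface $k H_v a_t$ is parametrized locally by $(k_v, X) \mapsto k k_v \exp(X) a_t$, and $\mu_{H_t}$ pulls back to $J_H(X, t)\, dk_v\, dX$, where $J_H$ is independent of $k_v$ by right-invariance of Haar measure on $H_v$ and satisfies $J_H(0, t) = J(0, t)$ since at $X = 0$ the two transverse slices coincide.

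Inside the tube I would set
\[
w_\epsilon\bigl(k \exp(X) a_t\bigr) := \frac{w(k)\, \rho_\epsilon(X, t)\, \psi_\epsilon(t)}{J_H(X, t)},
\]
and extend by zero outside, where $\psi_\epsilon$ is a smooth bump on $(-\epsilon, \epsilon)$ with $\int \psi_\epsilon = 1$, and $\rho_\epsilon(\cdot, t)$ is a bump on the $\epsilon$-ball in $\frakh_v^-$ chosen so as to satisfy, for every $|t| < \epsilon$, the two normalizations
\[
\int \rho_\epsilon(X, t)\, dX = 1 \qquad \text{and} \qquad \int \rho_\epsilon(X, t)\, \frac{J(X, t)}{J_H(X, t)}\, dX = 1.
\]
Since $J/J_H$ equals $1$ at $X = 0$ and differs from $1$ by $O(\norm{X}) = O(\epsilon)$ on the support of $\rho_\epsilon$, these two scalar conditions can be imposed simultaneously by drawing $\rho_\epsilon$ from a two-parameter family of bump profiles and solving a linear system whose coefficient matrix is close to the identity. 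Standard estimates then give $\norm{\rho_\epsilon}_l, \norm{\psi_\epsilon}_l \ll \epsilon^{-d_l}$, and smoothness of $J$, $J_H$, and $w$ yields the Sobolev bound (ii).

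Property (iii) will then follow by integrating first along $K_v$, then along the transverse $X$, then over $t$:
\[
\int_{kH_v a_t} w_\epsilon\, d\mu_{H_t} = \psi_\epsilon(t) \int_{K_v} w(k k_v)\, dk_v \cdot \int \rho_\epsilon(X, t)\, dX = \psi_\epsilon(t) \int_{kK_v} w\, d\mu_{K_v},
\]
and the $t$-integration uses $\int \psi_\epsilon = 1$. Property (i) will follow from the factorization
\[
\int_G w_\epsilon\, d\mu_G = \int_K w\, dk \cdot \int \psi_\epsilon(t) \int \rho_\epsilon(X, t)\, \frac{J(X, t)}{J_H(X, t)}\, dX\, dt,
\]
with the inner $X$-integral equaling $1$ by the second normalization of $\rho_\epsilon$.

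The main technical point is the simultaneous enforcement of both normalizations on $\rho_\epsilon$: properties (i) and (iii) are controlled by the two distinct Jacobians $J$ and $J_H$, which coincide on the $X = 0$ slice but differ by $O(\epsilon)$ on the support of the bump. The two-parameter bump construction resolves this at the cost of only absolute constants in the $\epsilon^{-d_l}$ Sobolev bound. The remaining ingredients are standard: existence of the tubular chart afforded by \eqref{eqn:lie_transv_cpct_refined}, Fubini's theorem, and bump-function Sobolev estimates on balls of radius $\epsilon$.
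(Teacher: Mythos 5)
Your construction is essentially the one in the paper: the paper uses the same transverse chart $K\times \frakp_{\frakm,\delta}\times\frakn^+_{\delta}\times\fraka_{\delta}\to G$ coming from \eqref{eqn:lie_transv_cpct}, defines $w_\epsilon(k,x)=w(k)\,\chi_\epsilon(k,x)$ with a rescaled bump normalized by a bounded $k$-dependent prefactor absorbing the Jacobian, obtains the Sobolev bound by scaling, and deduces \eqref{eqn:w_eps_coset_average} by integrating the fiberwise normalization along $K_v$-cosets. Your explicit bookkeeping of the two Jacobians $J$ (for $d\mu_G$ in the tube) versus $J_H$ (for $d\mu_{H_t}$ on the orbit slices) is exactly the subtlety that the paper's single normalization passes over, so the extra care is welcome.

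However, the mechanism you propose for making both (i) and (iii) \emph{exact} is not justified as stated. The two functionals $\rho\mapsto\int\rho\,dX$ and $\rho\mapsto\int\rho\,(J/J_H)\,dX$ differ by $O(\epsilon)$ on the support of $\rho_\epsilon$, so evaluating them on two normalized bump profiles produces a coefficient matrix close to the singular all-ones matrix, not to the identity; the solution of that system is controlled only by the $X$-variation of $J/J_H$ over the $\epsilon$-ball, and in the degenerate case where $J/J_H$ is $X$-independent but $\neq 1$ on a fixed-$t$ slice the two conditions on $\rho_\epsilon(\cdot,t)$ are outright incompatible. The repair is simple: impose only $\int\rho_\epsilon(\cdot,t)\,dX=1$, which already gives the inner equality needed for (iii), and put the second condition on the $t$-profile, requiring $\int\psi_\epsilon\,dt=1$ together with $\int\psi_\epsilon(t)\,m(t)\,dt=1$, where $m(t):=\int\rho_\epsilon(X,t)\tfrac{J(X,t)}{J_H(X,t)}\,dX$. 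Since $\Ad_{a_t}$ scales $\frakn^+$ (of dimension $20$) exponentially while fixing $\frakp_\frakm$, the ratio $J/J_H$ carries an explicit factor $e^{\pm 20t}$, so $m(t)=e^{\pm20t}\left(1+O(\epsilon)\right)$ varies at a definite unit rate in $t$; a signed correction to $\psi_\epsilon$ of bounded mass then meets both conditions while keeping $\norm{w_\epsilon}_l\ll_w\epsilon^{-d_l}$. Alternatively, observe that exactness of (i) is not needed downstream: a multiplicative $1+O(\epsilon)$ error there contributes $O(\epsilon\, e^{20T})$ to the count, which is absorbed after setting $\epsilon=e^{-\delta_1 T}$ in \autoref{sssec:conclusion}. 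With either of these adjustments your argument is sound and matches the paper's proof in all essentials.
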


\begin{proof}
 \autoref{eqn:lie_transv_cpct} provides a transverse chart to $K$, i.e. a diffeormorphism onto a local neighborhood of $K$:
 \begin{align}
 \label{eqn:k_thick_coords}
 \begin{split}
  K\times \frakp_{\frakm,\delta} \times \frakn^+_{\delta} \times \fraka_{\delta} & \to G\\
  (k,p,n,a) &\mapsto k\cdot \exp(p)\cdot \exp(n)\cdot \exp(a)
 \end{split}
 \end{align}
 The subscripts $(\bullet)_\delta$ denote $\delta$-neighborhoods of the corresponding vector spaces, for some uniform $ \delta>0 $.
 Note that Haar measure on $ G $ pulls back to Haar measure on $ K $ times some measure on $ \frakp_{\frakm,\delta} \times \frakn^+_{\delta} \times \fraka_{\delta} $ which is essentially Euclidean, up to uniform constants.

 Fix now a bump function $ \chi $ on $ \frakp_{\frakm} \times \frakn^+ \times \fraka $ and let $ \chi_\epsilon $ denote its rescaling:
 \begin{align}
 \label{eqn:chi_epsilon}
 \chi_\epsilon (k,x) := f(k,\epsilon) \epsilon^{-c_1(l)} \cdot \chi\left( \frac x \epsilon\right)\quad \textrm{for }k\in K
 \end{align}
 where the prefactor $ f(k,\epsilon)\epsilon^{-c_1(l)} $ is chosen such that $ \int \chi_\epsilon (k,x) dx=1 $ for all $ k\in K $.
 Note that because the pull-back of Haar measure is Euclidean up to bounded factor, the multiplier $ f(k,\epsilon) $ is also uniformly bounded.

 Next, the $ l$-th Sobolev norm of $ \chi_\epsilon $ is bounded by $ \epsilon^{-c(l)} $ times the $ l $-th Sobolev norm of $ \chi $, where $ c(l) $ is some universal constant depending on $ l $.
 Define the thickening of $ w $ by
 \begin{align}
 w_\epsilon(k,x) := w(k) \cdot \chi_\epsilon(k,\epsilon).
 \end{align}
 Note that this function satisfies by construction $ \int w_\epsilon(k,x)dx = w(k) $.
 The weaker identity required by \autoref{eqn:w_eps_coset_average} follows by integrating this along $ K_v $-cosets.
\end{proof}

\subsection{The count}
\label{ssection:the_count}
This section assembles the ingredients discussed above and proves the counting from \autoref{thm:homogeneous_counting}, as formulated in \autoref{eqn:homogeneous_asymptotic_exp}.
Let $ w $ be the function appearing in the statement, viewed as a $K_v$-invariant function on the compact group $ K $.
\autoref{prop:w_epsilon} gives a family of functions $ w_\epsilon $ to which the arguments below apply.

\subsubsection{Quantitative Equidistribution}

Denote by $w_\epsilon$ the function constructed in \autoref{prop:w_epsilon} but projected to $\leftquot{\Gamma}{G}$.
To define it, fix $\epsilon_0$ less than the injectivity radius of $\Gamma\cdot K \subset \leftquot{\Gamma} {G}$ and assume that $\epsilon <\epsilon_0$ throughout.
\autoref{thm:quantitative_equidist} in the next section gives quantitative equidistribution of the right $ a_t $-translates of $ Y $, namely universal constants $l,\delta>0$ such that
\begin{align}
\label{eqn:equidistr_prob}
\int_{Y a_t}^{} w_\epsilon \, d\mu_{Y a_t} = \frac{\Vol Y}{\Vol X} \left(\int_X w_\epsilon \, d\mu_X\right) + O\left(\norm{w_\epsilon}_l e^{-\delta \cdot t}\right)
\end{align}
where the implied constant in $O(-)$ is absolute.
Recall that $ d\mu_{Y a_t} $ denoted a fixed volume measure on $ Y a_t $ and $ \Vol Y $ is taken for this measure.
The factor $ \norm{w_\epsilon}_l $ denotes the $ l $-th Sobolev norm of $ w_\epsilon $.

\subsubsection{Approximate Counting Function}
Changing the measure in \autoref{eqn:equidistr_prob} to $ d\mu_{Y_t} $ (see \autoref{prop:msr_scaling}) simply multiplies both sides by a factor of $ e^{20 \cdot t} $:
\begin{multline}
\label{eqn:ft_asymptotics}
f(t,\epsilon) := \int_{Y a_t}^{} w_\epsilon \, d\mu_{Y_t}= \\ =\frac{\Vol Y}{\Vol X} \left(\int_X w_\epsilon \, d\mu_X\right)e^{20 \cdot t} + O\left(\norm{w_\epsilon}_l e^{(20-\delta) \cdot t}\right).
\end{multline}
The approximate counting function is defined to be the integrated version of this quantity:
\begin{align}
\label{eqn:FT_def}
F(T,\epsilon) &:= \int_{-\infty}^{T} f(t,\epsilon)\, dt  = \int_{-\infty}^{T} \left(\int_{Y a_t}^{} w_\epsilon \, d\mu_{Y_t} \right)\, dt.
\end{align}
Integrating the estimate from \autoref{eqn:ft_asymptotics} gives:
\begin{align}
\label{eqn:FT_asymptotics}
\begin{split}
F(T,\epsilon) & = \frac{\Vol Y}{20\cdot \Vol X} \left(\int_X w_\epsilon \, d\mu_X\right)e^{20 \cdot T} + O\left(\norm{w_\epsilon}_l e^{(20-\delta) \cdot T}\right)\\
& = \frac{\Vol Y}{20\cdot \Vol X} \left(\int_K w \, d\mu_K\right)e^{20 \cdot T} + O\left(\norm{w_\epsilon}_l e^{(20-\delta) \cdot T}\right).
\end{split}
\end{align}

\subsubsection{Counting Function}
\label{sssec:CT}
We also have the counting function we are interested in:
\begin{align*}
\begin{split}
C(T) & := \sum_{[\gamma]\colon t([\gamma])\leq T}^{} w(C_{[\gamma]}) \\
\end{split}
\end{align*}
which can be controlled by $F(T,\epsilon)$ as follows:

\begin{proposition}[Comparison]
	\label{prop:comparison}
	We have the inequalities:
	\begin{align*}
	C(T-\epsilon) \leq F(T,\epsilon) \leq C(T+\epsilon)
	\end{align*}
  for all $T\in \bR$ and $0<\epsilon<\epsilon_0$.
\end{proposition}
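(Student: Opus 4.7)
The plan is to unfold the definition of $F(T,\epsilon)$ using \autoref{prop:inters_biject}, which identifies the intersections of $Ya_t$ with (a neighborhood of) $K\subset X$ with the cosets $[\gamma]\in \rightquot{\Gamma}{\Gamma_v}$, and to then use the key normalization property \eqref{eqn:w_eps_coset_average} of $w_\epsilon$ to convert the $Y a_t$-integral into an evaluation of $w$ on the corresponding $K_v$-coset.

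More concretely, first I would localize. Since $w_\epsilon$ is supported in the $\epsilon$-neighborhood of $K$ in $G$ (and $\epsilon<\epsilon_0$ is smaller than the injectivity radius of $\Gamma\cdot K$), the integrand of $f(t,\epsilon)$ is non-zero only when $Ya_t$ enters this $\epsilon$-thickening. Working in the transverse chart \eqref{eqn:k_thick_coords} and using the description \eqref{eqn:lie_transv_cpct_refined}, the portion of $Ya_t$ lying in the $\epsilon$-neighborhood of $K$ decomposes, intersection by intersection, into pieces of the form $[\gamma]^{-1}C_{[\gamma]}\cdot a_{s}$ with $s\in(t-t([\gamma])-\epsilon,\, t-t([\gamma])+\epsilon)$ for those $[\gamma]$ with $|t-t([\gamma])|<\epsilon$. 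This follows from \autoref{prop:C_gamma_inters} and the fact that $Y\cdot a_t$ intersects $K$ transversally along $K_v$-cosets (see \eqref{eqn:lie_transv_cpct_refined}).

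Next, I would carry out the integration. For each $[\gamma]$ with $|t-t([\gamma])|<\epsilon$, the contribution to $f(t,\epsilon)$ is the integral of $w_\epsilon$ over the corresponding $K_v$-coset-times-interval, computed against the homogeneous measure $\mu_{Y_t}$. Using the measure identification of \autoref{sssec:homg_msr}, $\mu_{Y_t}$ matches (locally, near the intersection) the measure $\mu_{H_v\cdot a_s}$ appearing in \eqref{eqn:w_eps_coset_average}. Summing in $[\gamma]$ and integrating in $t\in(-\infty,T]$, Fubini converts the double integral into
\begin{align*}
F(T,\epsilon) \;=\; \sum_{[\gamma]} \int\displaylimits_{-\epsilon}^{\epsilon} ds \int\displaylimits_{[\gamma]^{-1}C_{[\gamma]}\cdot a_s} w_\epsilon\, d\mu_{H_s} \cdot \mathbf{1}_{\{t([\gamma])+s\leq T\}},
\end{align*}
and the inner two integrals, when the indicator is $1$ on the whole $(-\epsilon,\epsilon)$, evaluate to $w(C_{[\gamma]})$ by the defining property \eqref{eqn:w_eps_coset_average} of $w_\epsilon$.

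Finally, I would read off the comparison. Assuming $w\geq 0$ (as is the case for the weight functions used in \autoref{thm:homogeneous_counting}; the general case follows by splitting $w=w^+-w^-$), the indicator $\mathbf{1}_{\{t([\gamma])+s\leq T\}}$ on $s\in(-\epsilon,\epsilon)$ is identically $1$ as soon as $t([\gamma])\leq T-\epsilon$ and identically $0$ as soon as $t([\gamma])>T+\epsilon$. Consequently the sum dominates $\sum_{t([\gamma])\leq T-\epsilon} w(C_{[\gamma]})=C(T-\epsilon)$ and is dominated by $\sum_{t([\gamma])\leq T+\epsilon} w(C_{[\gamma]})=C(T+\epsilon)$, which is exactly the stated sandwich. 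The main (modest) obstacle is a bookkeeping one: verifying that in the transverse chart \eqref{eqn:k_thick_coords} the homogeneous measure $\mu_{Y_t}$ is identified with the product of the $H_v$-homogeneous measure on each translate $kH_v\cdot a_s$ and Lebesgue measure on the $\fraka$-direction, so that one is entitled to apply \eqref{eqn:w_eps_coset_average} on the nose; this is a consequence of \autoref{prop:msr_scaling} together with the transversality \eqref{eqn:lie_transv_cpct_refined}.
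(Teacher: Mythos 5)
Your argument is correct and follows essentially the same route as the paper's proof: both decompose the mass of $w_\epsilon$ near $\Gamma\cdot K$ into pieces indexed by the cosets $[\gamma]$ via \autoref{prop:inters_biject}, evaluate each piece using the normalization \eqref{eqn:w_eps_coset_average}, and deduce the sandwich from the fact that the contribution of $[\gamma]$ occurs only for $|t-t([\gamma])|\leq\epsilon$. Your explicit flagging of the positivity of $w$ (needed for the pointwise comparison) is a detail the paper leaves implicit.
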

Note that $C(T)=0$ for $T$ sufficiently negative, and the same is true for $F(T,\epsilon)$ once $\epsilon$ is sufficiently small.
\begin{proof}
	By construction $ w_\epsilon $ is supported near $\Gamma\cdot  K $ in $X=\leftquot{\Gamma}{G}$.
  Let $\tilde{w}_\epsilon$ denote the pullback of $w_\epsilon$ to $X_v=\leftquot{\Gamma_v}{G}$.
  Then $\tilde{w}_\epsilon$ is the sum of functions supported in the $\epsilon$-neighbrhood of the $K$-orbits $ \leftquot{\Gamma_v}{\Gamma}\cdot K $, and with disjoint supports.
	But \autoref{prop:inters_biject} gives a bijection between intersections of $ \leftquot{\Gamma_v}{\Gamma}\cdot K $ with $ a_t $-translates of $ Y $ in $X_v$ and cosets $ [\gamma] \in \rightquot{\Gamma}{\Gamma_v}$.
  So we can write
  \[
    \tilde{w}_\epsilon = \sum_{[\gamma]} \tilde{w}_{\epsilon , [\gamma]}
  \]
  and from \autoref{eqn:w_eps_coset_average} it follows that
  \[
    C(T)= \sum_{[\gamma]\colon t([\gamma])\leq T} \int_{-\infty}^{+\infty} \left(\int_{Y a_t}^{} \tilde{w}_{\epsilon,[\gamma]} \, d\mu_{Y_t} \right)\, dt
  \]
  where the contribution in the integral comes only for the values of $t$ satisfying $|t([\gamma])-t|\leq \epsilon $.

  The inequalities in the proposition then follow because each of the quantities can be expressed as integrals of functions that obey the desired inequality pointwise.
\end{proof}

Below, the notation $A(T) = B(T\pm \epsilon)$ means $B(T-\epsilon)\leq A(T) \leq B(T+\epsilon)$.

\subsubsection{Conclusion}
\label{sssec:conclusion}
\autoref{prop:comparison} and the asymptotic expansion of $ F(T,\epsilon) $ from \autoref{eqn:FT_asymptotics} give
\begin{align*}
C(T) &= \frac{\Vol Y}{20 \Vol X} \left(\int_K w \, d\mu_K\right)e^{20\cdot (T\pm \epsilon)} + O\left(\norm{w_\epsilon}_l \cdot e^{(20-\delta)T} \right)
\intertext{ and using the Sobolev norm bound on $ w_\epsilon $ from \autoref{eqn:sobolev_bd_cpct_thick}:}
C(T) &= \frac{\Vol Y}{20 \Vol X} \left(\int_K w\, d\mu_K\right)e^{20\cdot (T\pm \epsilon)} + O(\epsilon^{-d_l} \cdot e^{(20-\delta)T} ).
\end{align*}
Set now $ \epsilon = e^{-\delta_1 \cdot T} $ and choose $ \delta_1 $ sufficiently small, e.g. $ \delta_1:= \frac{\delta}{2(d_l+1)} $ to obtain the final estimate (using $ e^{-A\cdot \epsilon} =  1 - A\epsilon + O(\epsilon^2) $):
\begin{align}
C(T) = \frac{\Vol Y}{20 \Vol X} \left(\int_K w \, d\mu_K\right)e^{20\cdot T} + O(e^{(20-\delta_1)T} ).
\end{align}
This finishes the proof of \autoref{thm:homogeneous_counting}. \hfill \qed

\section{Equidistribution}
\label{sec:equidistribution}

This section establishes the quantitative equdistribution of right $ Y $-translates by $ a_t $.
The strategy is to replace $ Y $ by a bump function supported near it and use general quantitative mixing results for functions.
The wavefront property allows one to compare integrals on translates of $ Y $ and integrals against the translated bump function.
Because $ Y $ is non-compact, to construct the bump function we need to cut off the part of $ Y $ that lies sufficiently deep in the cusp.

Throughout, constructions will depend on a parameter $ \epsilon>0 $ which will eventually be $ e^{-\delta \cdot t} $ for some $ \delta>0 $.
However, in most arguments we can freely replace $ \epsilon $ by a fixed positive power of $ \epsilon $.
This refers to injectivity radii (if we consider $ \epsilon $ or $ \epsilon^{1/2} $ is irrelevant) as well as the Sobolev norms of various functions.

\subsection{Cutoffs, Thickenings, and Wavefronts}

In this section, following Benoist--Oh \cite{Benoist_Oh} we construction a function $ \phi_\epsilon $ on $ X $ which will serve as a replacement for $ Y $.
For this, we first need to cut off the part of $ Y $ with small injectivity radius (living in the cusp of $ X $).
Next, there are natural transverse directions to $ Y $ and $ \phi_\epsilon $ is constructed using bump function in those transverse directions.
Finally, we establish the analogue of the wavefront lemma of Eskin--McMullen \cite{EskinMcMullen} in this setting.

\begin{figure}[ht]
	\centering
	\includegraphics[width=\textwidth]{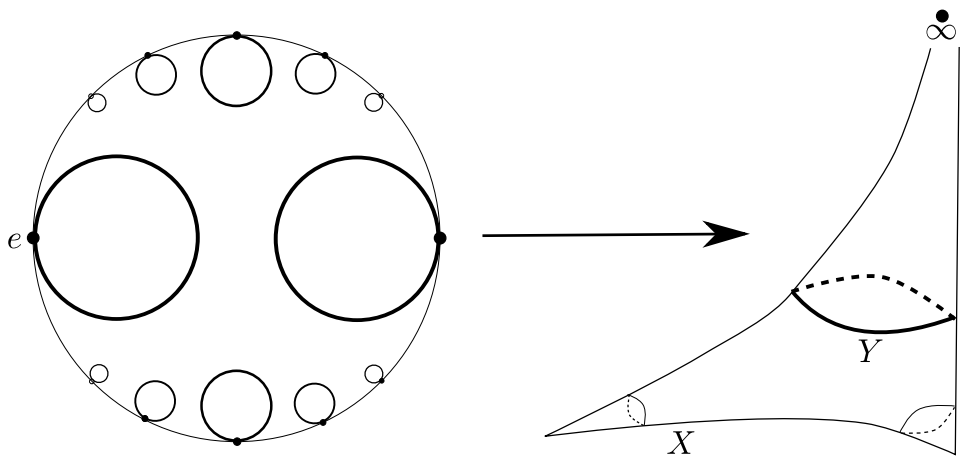}
	\caption{The equidistribution of $ Y $ established in this section is analogous to the equidistribution of a closed horocycle on the modular surface, under the geodesic flow that expands it.}
	\label{fig:horocycles}
\end{figure}

\subsubsection{Injectivity radius}
Let $ B_r \subset G$ denote the ball of radius $ r $ in $ G $, centered at the identity.
For any point $ x\in X $ define
\begin{align*}
\inj(x) := \sup_{r\geq 0} \lbrace B_r \to X\colon  g \mapsto x\cdot g \quad \textrm{is injective}\rbrace
\end{align*}
and set
\begin{align*}
Y_\epsilon := \{y\in Y \colon \inj(y)\geq \epsilon\}.
\end{align*}

Following Benoist--Oh \cite{Benoist_Oh}, the volume of points in $ Y $ with small injectivity radius is bounded by a power of $ \epsilon $.
\begin{proposition}
	\label{prop:inj_radius_volume_bound}
	There exist constants $ C, p >0 $ such that
	\begin{align*}
	\mu_Y (Y\setminus Y_\epsilon) \leq C \epsilon^p.
	\end{align*}
\end{proposition}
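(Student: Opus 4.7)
The plan is to prove the estimate by reducing to a Siegel-set analysis on $Y = \leftquot{\Gamma_v}{H_v}$, using that $H_v$ admits the Levi decomposition $H_v = M \ltimes N^+$ with $N^+$ the 20-dimensional unipotent radical (of weight $+1$ under $a_t$) and $M$ reductive with $\frakm = \frakk_v \oplus \frakp_\frakm$. The corresponding $\bQ$-structure makes $\Gamma_v$ an arithmetic lattice in $H_v$, and the arithmeticity is inherited from $\Gamma = \Orthog(\Lambda_\bZ)$ via the interpretation of $\Gamma_v$-orbits as integral primitive null vectors.

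First, I would apply reduction theory (Borel--Harish-Chandra, or the concrete form for stabilizers of isotropic vectors in orthogonal groups) to cover $Y$ by finitely many Siegel sets. Each Siegel set has the form $\omega \cdot \{b_s : s \geq s_0\} \cdot \omega'$, where $\omega, \omega'$ are relatively compact and $b_s$ is a one-parameter ray in a positive Weyl chamber of $M$ that expands some nilpotent subalgebra $\frakn_M^+ \subset \frakm$. The cusps of $Y$ arise from $s \to \infty$, and the Haar measure on $H_v$ pulled back to Siegel coordinates becomes (up to bounded factors) the Euclidean measure on $\omega \times \omega'$ times $e^{-\rho \cdot s}\, ds$, where $\rho > 0$ is the sum of positive roots weighting $\frakn_M^+$ and the relevant part of $\frakn^+$ that gets contracted by conjugation with $b_s$. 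In particular, the measure of the tail $\{s \geq S\}$ of a single Siegel set is bounded by $C' e^{-\rho S}$ for some $C' > 0$.

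Next, I would estimate the injectivity radius inside a Siegel set. A point $y = [h]$ has $\inj(y) < \epsilon$ precisely when some non-identity $\gamma \in \Gamma_v$ satisfies $h^{-1} \gamma h \in B_\epsilon$. On a Siegel set the candidate $\gamma$ is essentially forced (by Zassenhaus/Margulis' lemma) to lie in a fixed unipotent subgroup of $\Gamma_v$ that is expanded by $b_{-s}$-conjugation, and for $h = \omega_1 b_s \omega_2$ with $\omega_i$ varying in a compact set we get a polynomial comparison
\[
  \inj(h) \;\asymp\; e^{-\lambda s}
\]
for some $\lambda > 0$ determined by the largest weight of $\Ad(b_s)$ on the relevant root spaces. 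Consequently $\inj(y) < \epsilon$ forces $s \geq \frac{1}{\lambda} \log(1/\epsilon) - O(1)$.

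Combining the two estimates and summing over the finitely many Siegel sets gives
\[
  \mu_Y(Y \setminus Y_\epsilon) \;\leq\; \sum_{\text{cusps}} C' \exp\!\bigl(-\rho \cdot \tfrac{1}{\lambda} \log(1/\epsilon)\bigr) \;\leq\; C \epsilon^{p}
\]
with $p := \rho/\lambda > 0$, as required. The main obstacle is checking that the Margulis-lemma step gives the clean polynomial comparison $\inj(h) \asymp e^{-\lambda s}$ uniformly over the compact sides $\omega_1, \omega_2$, and in particular that one cannot escape the single distinguished unipotent subgroup that produces small conjugates; this is standard in the semisimple case but requires some care here because $H_v$ is only reductive modulo its unipotent radical $N^+$, so one must handle separately the small $\Gamma_v$-elements coming from $\Gamma_v \cap N^+$ (whose $b_s$-conjugates remain bounded, so they do not contribute to the thin part once $\epsilon$ is below a fixed threshold) versus those projecting nontrivially to the Levi $M$ (which drive the cuspidal behavior and yield the exponent $\lambda$).
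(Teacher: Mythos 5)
Your skeleton (reduction theory for $\leftquot{\Gamma_v}{H_v}$, exponential decay of the Siegel-set measure in the chamber direction, and a polynomial comparison between injectivity radius and depth in the cusp) is the same as the paper's, which adapts Benoist--Oh's Lemma 11.2 to the case where $H_v$ is a semidirect product of a semisimple group and a unipotent radical. But there is a genuine gap in the key step. The injectivity radius defining $Y_\epsilon$ is the one computed in $X=\leftquot{\Gamma}{G}$, not in $Y$: the paper defines $\inj(x)$ for $x\in X$, and the downstream uses (the local product structure, where $y\cdot G_\epsilon\to X$ must be injective, and the thickening $\phi_\epsilon$ living on $X$) require exactly this. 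Your characterization ``$\inj(y)<\epsilon$ precisely when some non-identity $\gamma\in\Gamma_v$ satisfies $h^{-1}\gamma h\in B_\epsilon$'' replaces $\Gamma$ by $\Gamma_v$; since $\Gamma_v\subset\Gamma$, the set you control is only a subset of $Y\setminus Y_\epsilon$, so bounding its measure does not prove the proposition. The missing step is a lower bound, on the truncated Siegel sets of $H_v$, for $\min_{\gamma\in\Gamma\setminus\{1\}}\operatorname{d}(h^{-1}\gamma h,\mathrm{id})$ over \emph{all} of $\Gamma$, i.e.\ one must show that staying at bounded height in the cusp of $Y$ keeps the point at polynomially comparable height in the cusp of $X$; in Benoist--Oh this is done through the arithmetic description of the injectivity radius on $X$ (shortest vectors of $h^{-1}\Lambda_\bZ$), and your Margulis-lemma argument inside $\Gamma_v$ does not supply it.

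A second, more local error: your claim that elements of $\Gamma_v\cap N^+$ have bounded $b_s$-conjugates and hence do not contribute to the thin part is false. The split torus of the Levi $M\cong\Orthog(2,18)$ acts on the $20$-dimensional abelian unipotent radical through the standard representation, whose nonzero restricted weights occur in pairs $\pm e_1,\pm e_2$; concretely, for an Eichler transvection $E_{v,w}$ with $w\in\Lambda_\bZ\cap v^\perp$ one has $a\,E_{v,w}\,a^{-1}=E_{v,aw}$, so a rational chamber direction contracting $w$ makes the conjugate arbitrarily small. Thus integral unipotent-radical elements do create part of the thin region of $Y$. This does not ruin the final polynomial bound (such conjugates become small only at depth $s\gtrsim\log(1/\epsilon)$, at a rate read off from the chamber coordinates), but the dichotomy you propose as the delicate step is miscalibrated; relatedly, the relevant split rank here is $2$, so the Siegel sets involve a two-dimensional cone rather than a single ray $b_s$, though that part is only cosmetic.
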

\begin{proof}
	This claim is essentially the same as that of \cite[Lemma 11.2]{Benoist_Oh}, except that $ H_v $, the group locally modeling $ Y $, is not affine symmetric but rather a semidirect product of a semisimple group and a unipotent part (see \autoref{sssec:He_dec}).
	We only recall the main steps in the proof.

	Reduction theory \cite[Ch. 4]{Platonov_Rapinchuk} applies in the same way, except that the unipotent part is larger.
	One has a decomposition $ H_v = N_v A_v K_v $ and a Siegel set $ \Sigma_{\omega,t_0} = \omega \cdot A_{v, t_0}\cdot K_v $ where $ \omega\subset N_v $ is a compact set and $ A_{v,t_0} $ is a cone in a Cartan subalgebra of $ H_v $.
	It has the property that it is a coarse fundamental domain for the action of $ \Gamma_v $, namely there exist finitely many $ h_i \in H $ such that $ H_v = \cup_i \Gamma_v \cdot h_i \Sigma_{\omega, t_0} $.

	Next, Benoist--Oh choose a cutoff $A_\epsilon \subset  A_{v, t_0} $ and show that the injectivity radius on the image of $ \Gamma_v \cdot h_i \cdot \omega  A_\epsilon K_v $ in $ X $ is bounded below by $ \epsilon $.
	Additionally, they check that the volume of the complement is bounded by a positive power of $ \epsilon $.
	This finishes the proof.
\end{proof}

\subsubsection{Weak Stable Direction}
From the decomposition of the Lie algebra $ \frakg $ in \autoref{sssec:weight_dec}, define
\begin{align*}
\frakn' := \frakn^- \oplus \fraka \textrm{ with associated Lie group } N':=\exp(\frakn').
\end{align*}
Note that $ \frakg = \frakh_v\oplus \frakn' $ where $ \frakh_v $ is the Lie algebra of $ H_v=\Stab_G v $, and corresponds to the tangent bundle of $ Y $.
Under the adjoint action of $ a_t $, the algebra $ \frakn' $ is weakly contracted: $ \frakn $ is strongly contracted, and $ \fraka $ is fixed.

Throughout, a subscript of $ \epsilon $ in a Lie algebra (resp. group) denotes an $ \epsilon $-neighborhood of the origin (resp. identity).
\begin{proposition}[Local Product Structure]
	\label{prop:loc_prod_struct}
	For all sufficiently small $ \epsilon $, the map
	\begin{align*}
	Y_{\epsilon^{1/2}} \times N'_\epsilon & \to X\\
	(y,n') &\mapsto y\cdot n'
	\end{align*}
	is injective.
\end{proposition}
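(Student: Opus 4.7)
The strategy is to suppose the map fails to be injective and derive a contradiction by lifting the equality to $G$ and exploiting the Lie algebra decomposition $\frakg=\frakh_v\oplus\frakn'$ together with the injectivity-radius hypothesis.

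Suppose $(y_1,n'_1),(y_2,n'_2)\in Y_{\epsilon^{1/2}}\times N'_\epsilon$ satisfy $y_1 n'_1 = y_2 n'_2$ in $X$. First I would choose lifts $\tilde{y}_i\in H_v$ of $y_i\in Y=\Gamma_v\backslash H_v$; the equality in $X$ produces a $\gamma\in\Gamma$ with $\tilde{y}_1 n'_1 = \gamma\tilde{y}_2 n'_2$ in $G$, which rearranges to
\[
 \gamma\tilde{y}_2 \;=\; \tilde{y}_1 \cdot n,\qquad n := n'_1 (n'_2)^{-1}.
\]
Using the Baker--Campbell--Hausdorff formula for $N'$, the element $n$ lies in $N'_{C\epsilon}$ for some $C$ depending only on $\frakn'$, provided $\epsilon$ is small.

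I would then separate two steps. The easier one uses the direct-sum decomposition $\frakg=\frakh_v\oplus\frakn'$: it implies that the multiplication map $H_v\times N'_\delta\to G$ is a diffeomorphism onto an open neighbourhood of $H_v$ for sufficiently small $\delta$, and in particular $H_v\cap N'_{C\epsilon}=\{e\}$ once $\epsilon$ is small. Assuming one has already established $\gamma\in\Gamma_v\subset H_v$, the inclusion $\gamma\tilde{y}_2\in H_v$ forces $\tilde{y}_1 n\in H_v$, hence $n\in H_v\cap N'_{C\epsilon}=\{e\}$. This gives $n'_1=n'_2$ and $\tilde{y}_1=\gamma\tilde{y}_2\in\Gamma_v\tilde{y}_2$, so $y_1=y_2$ in $Y$.

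The main obstacle is therefore to exclude the case $\gamma\in\Gamma\setminus\Gamma_v$. Here I would invoke the injectivity-radius hypothesis. From $\gamma\tilde{y}_2=\tilde{y}_1 n$ one reads off $d_G(\tilde{y}_1,\gamma\tilde{y}_2)=\|n\|\le C\epsilon$, so in $X$ the points $y_1$ and $y_2$ are within $C\epsilon$ of each other. Since $\inj(y_1)\ge\epsilon^{1/2}$, the ball $\tilde{y}_1\cdot B_{\epsilon^{1/2}}$ embeds into $X$, and $\gamma\tilde{y}_2$ is the unique lift of $y_2$ it contains. Running the same argument with the roles of $y_1$ and $y_2$ exchanged and using $\inj(y_2)\ge\epsilon^{1/2}$, one finds that $\gamma^{-1}\tilde{y}_1$ is the unique lift of $y_1$ inside $\tilde{y}_2\cdot B_{\epsilon^{1/2}}$. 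Composing these two unique-lift statements (and using that $Y\hookrightarrow X$ is injective, a consequence of $\Gamma\cap H_v=\Gamma_v$) pins down $\gamma$: for $\epsilon$ small the only element of $\Gamma$ compatible with both is one stabilising $v$, i.e.\ lying in $\Gamma_v$. The quadratic gap between the cutoff $\epsilon^{1/2}$ for $Y$ and the cutoff $\epsilon$ for $N'_\epsilon$ in the statement is exactly what gives room to absorb the Baker--Campbell--Hausdorff and triangle-inequality losses within the injectivity radius.
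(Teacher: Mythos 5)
Your reduction is set up correctly, and the first half is fine and matches the paper's mechanism: once you know $\gamma\in\Gamma_v$, the decomposition $\frakg=\frakh_v\oplus\frakn'$ makes $H_{v}$ and $N'$ transverse at the identity, so $n=n'_1(n'_2)^{-1}\in H_v\cap N'_{C\epsilon}=\{e\}$ and the conclusion follows. (Your use of Baker--Campbell--Hausdorff is also legitimate, since $\frakn'=\frakn^-\oplus\fraka$ is a subalgebra and $N'$ a subgroup.)

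The gap is in the step that excludes $\gamma\in\Gamma\setminus\Gamma_v$, which is the actual heart of the proposition. The two ``unique lift'' statements you extract from $\inj(y_i)\geq\epsilon^{1/2}$ are automatic and symmetric: they hold for \emph{whichever} $\gamma\in\Gamma$ realizes the near-coincidence, and they only say that this $\gamma$ is uniquely determined by the configuration. Nothing in them refers to the vector $v$, so ``composing'' them cannot force $\gamma v=v$; as written this is a non sequitur. What has to be ruled out is that a different sheet $\gamma^{-1}H_v$ of the closed set $\Gamma H_v$ passes within distance $\approx\epsilon$ of the point $\tilde y_1\in H_v$, i.e.\ that $Y$ returns $\epsilon$-close to itself \emph{transversally} at a point of injectivity radius $\epsilon^{1/2}$; the injectivity radius at $y_1$ controls elements $\tilde y_1^{-1}\gamma\tilde y_1$ near the identity, not nearby $\Gamma$-translates of the sheet $H_v$, so some arithmetic input is unavoidable. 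For instance, apply $\gamma\tilde y_2=\tilde y_1 n$ to $v$: since $\tilde y_1,\tilde y_2$ fix $v$, one gets $\gamma v - v=\tilde y_1(nv-v)$ with $\norm{nv-v}=O(\epsilon)$; if $\gamma\notin\Gamma_v$ this is a nonzero vector of $\Lambda_\bZ$, hence of norm bounded below, forcing the lift $\tilde y_1$ to have operator norm $\gtrsim\epsilon^{-1}$, and one must then show that points of $Y_{\epsilon^{1/2}}$ admit lifts in $H_v$ of smaller norm --- this is exactly where the Siegel-set description of $Y_\epsilon$ from \autoref{prop:inj_radius_volume_bound} (following Benoist--Oh), and the specific interplay of the exponents $\epsilon^{1/2}$ versus $\epsilon$, enter, rather than merely absorbing triangle-inequality losses. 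The paper's own (brief) proof is organized around the same two local facts you use, but the missing sheet-exclusion argument is the substance you would need to add for your write-up to be a proof.
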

\begin{proof}
	Recall that $ Y $ is locally modeled on the subgroup $ H $.
	Because of the Lie algebra decomposition of $ G $ from \autoref{sssec:weight_dec}, for sufficiently small $ \epsilon $ the map $ H_\epsilon\times N'_\epsilon \to G$ is injective.
	Now for any $ y\in Y_{\epsilon^{1/2}} $ the map $ y\cdot G_\epsilon \to X $ is injective (by the injectivity radius assumption, and since $ \epsilon^{1/2}\gg 2\epsilon $).
	The claim now follows.
\end{proof}

To avoid cluttering notation, from now on we assume that $ Y_\epsilon $ denotes the subset of $ Y $ with injectivity radius at least $ \epsilon^{1/2} $, so that the above proposition applies.

\begin{proposition}[Thickening $ Y $]
	\label{prop:Y_thickening}
	For all sufficiently small $\epsilon>0$
	there exist smooth functions $\tau_\epsilon:Y_\epsilon \to \bR_{\geq 0}$  and $ \rho_\epsilon\colon N'_\epsilon \to \bR_{\geq 0} $ with the following properties:
	\begin{enumerate}
		\item The support of $\tau_\epsilon$ is in $Y_\epsilon$, it satisfies $ \tau_\epsilon \leq 1 $ everywhere and $ \tau_\epsilon\vert_{Y_{4\epsilon}} \equiv 1 $.
		\item The size of $ \rho_\epsilon $ is normalized to:
		\begin{align*}
		\int_{N'_\epsilon} \rho_\epsilon = 1.
		\end{align*}
		\item The function defined (see \autoref{prop:loc_prod_struct})  for $ x = yn' \in Y_\epsilon\times N'_\epsilon $ as
		\begin{align}
		\label{eqn:phi_epsilon_def}
		\phi_\epsilon (yn') := \tau_\epsilon(y)\cdot \rho_\epsilon(n')
		\end{align}
		satisfies the following Sobolev norm bound, for every $l\in \bN$ and corresponding constant $C_l$:
		\begin{align}
		\label{eqn:phi_epsilon_sobolev_bound}
		\norm{\phi_\epsilon}_l \lesssim_l \epsilon^{-C_l}.
		\end{align}
		In fact, $C_l$ depends linearly on $l$.
	\end{enumerate}
\end{proposition}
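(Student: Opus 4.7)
The plan is to construct $\rho_\epsilon$ and $\tau_\epsilon$ separately as scale-$\epsilon$ mollifications of fixed data, and then to verify the Sobolev bound by tracking the factors of $\epsilon^{-1}$ introduced by each derivative.

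\textbf{Construction of $\rho_\epsilon$.} Fix once and for all a smooth non-negative bump function $\rho \colon \frakn' \to \bR_{\geq 0}$ supported in the unit ball with $\int_{\frakn'} \rho = 1$ against Lebesgue measure on $\frakn'$. Define $\rho_\epsilon$ on $N'_\epsilon$ via the exponential chart by
\[
\rho_\epsilon(\exp X) \;=\; c_\epsilon \cdot \epsilon^{-\dim \frakn'}\, \rho(X/\epsilon),
\]
with $c_\epsilon = 1 + O(\epsilon)$ chosen to enforce (ii) after accounting for the Jacobian between Haar measure on $N'$ and Lebesgue measure on $\frakn'$ (which is $1+O(|X|)$). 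All derivatives of $\rho_\epsilon$ along a fixed basis of $\frakn'$ of order $l$ have $L^\infty$-norm of order $\epsilon^{-\dim \frakn' - l}$, and since the support has Haar volume $\lesssim \epsilon^{\dim \frakn'}$, the $L^2$-norm of each such derivative is $\lesssim \epsilon^{-\dim \frakn'/2 - l}$.

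\textbf{Construction of $\tau_\epsilon$.} The function $\inj\colon Y\to \bR_{\geq 0}$ is $1$-Lipschitz but only continuous. Using a partition of unity subordinate to Siegel-type charts on $Y$ (available from reduction theory, as in the proof of \autoref{prop:inj_radius_volume_bound}), convolve $\inj$ against a scale-$\epsilon^{1/2}/100$ bump on $H_v$ to obtain a smooth function $\inj^\sharp_\epsilon \colon Y\to \bR_{\geq 0}$ with $\|\inj^\sharp_\epsilon - \inj\|_\infty \leq \epsilon^{1/2}/4$ and $\|D^l \inj^\sharp_\epsilon\|_\infty \lesssim_l \epsilon^{-(l-1)/2}$ for $l\geq 1$. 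Fix a smooth transition $\chi \colon \bR \to [0,1]$ with $\chi \equiv 0$ on $(-\infty, 5/4]$ and $\chi\equiv 1$ on $[7/4,\infty)$, and set
\[
\tau_\epsilon(y) \;:=\; \chi\!\left( \inj^\sharp_\epsilon(y) / \epsilon^{1/2} \right).
\]
Recalling the convention $Y_\epsilon = \{\inj \geq \epsilon^{1/2}\}$ and $Y_{4\epsilon} = \{\inj \geq 2\epsilon^{1/2}\}$, the comparison estimate yields (i): $\tau_\epsilon \equiv 1$ on $Y_{4\epsilon}$ and $\tau_\epsilon \equiv 0$ outside $Y_\epsilon$. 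By the chain and Fa\`a di Bruno formulas, the $l$-th derivatives of $\tau_\epsilon$ have $L^\infty$-norm $\lesssim_l \epsilon^{-l/2}$.

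\textbf{The Sobolev estimate.} By \autoref{prop:loc_prod_struct}, the formula $\phi_\epsilon(y n') = \tau_\epsilon(y) \rho_\epsilon(n')$ defines a smooth function on an open set of $X$ containing the supports, and we extend by zero. Choose a basis of $\frakg$ adapted to the splitting $\frakg = \frakh_v \oplus \frakn'$; each basis vector, acting as a left-invariant vector field, differentiates $\phi_\epsilon$ essentially along the $Y$-factor or the $N'$-factor (with cross terms of bounded size from the noncommutativity of the chart, which contribute at most lower-order corrections). An $l$-th order differential operator in such a basis, applied to $\phi_\epsilon$, produces a sum of at most $O(1)^l$ terms, each of the form $(\text{derivative of } \tau_\epsilon)\cdot(\text{derivative of } \rho_\epsilon)$ of total order $\leq l$. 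Combining the $L^\infty$ estimates on $\tau_\epsilon$ with the $L^2$ estimates on $\rho_\epsilon$ (the latter being supported in a set of volume $\lesssim \epsilon^{\dim \frakn'}$) gives
\[
\| \phi_\epsilon \|_l \;\lesssim_l\; \epsilon^{-l/2} \cdot \epsilon^{-\dim \frakn'/2 - l},
\]
which is of the form $\epsilon^{-C_l}$ with $C_l$ depending linearly on $l$.

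The main obstacle is the non-smoothness of $\inj$; the principal work is in step two, smoothing it at the correct scale $\epsilon^{1/2}$ to produce $\tau_\epsilon$ with both the pointwise bracketing property (i) and derivative bounds linear in $l$. Everything else is a straightforward rescaling computation.
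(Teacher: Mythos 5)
Your construction is correct, but it reaches the cutoff $\tau_\epsilon$ by a genuinely different route than the paper. The paper follows Benoist--Oh: it fixes rescaled bump functions $\beta_\epsilon$ on $H_v$, places them at a maximal $\epsilon^3$-separated net of points $y_i\in Y_\epsilon$, and defines $\tau_\epsilon$ as the ratio $\sum_{y_j\in\cF}\beta_{y_j,\epsilon}\big/\sum_{y_i\in\cG}\beta_{y_i,\epsilon}$, with $\cF,\cG$ the centers lying in $Y_{4\epsilon}$ and $Y_{2\epsilon}$; the support and $\equiv 1$ properties come from which centers are included, and the Sobolev bounds from the net having $O(\epsilon^{-3\dim Y})$ points. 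You instead mollify the injectivity radius at scale $\epsilon^{1/2}$ and compose with a fixed transition function $\chi$; the bracketing $Y_{4\epsilon}\subset\{\tau_\epsilon=1\}$, $\operatorname{supp}\tau_\epsilon\subset Y_\epsilon$ then follows from the sup-norm error of the mollification, using the Lipschitz property of $\inj$ (which does hold here for a left-invariant metric, since $|\inj(xg)-\inj(x)|\leq d(e,g)$) and the paper's convention $Y_\epsilon=\{\inj\geq\epsilon^{1/2}\}$. Your route is arguably cleaner and avoids the partition-of-unity ratio, at the price of invoking regularity of $\inj$; the paper's route needs no property of $\inj$ beyond its definition and transfers verbatim from the symmetric-space case treated by Benoist--Oh. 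The $\rho_\epsilon$ construction and the overall Sobolev bookkeeping coincide with the paper's. One small imprecision: in the chart $(y,n')\mapsto yn'$, differentiating along $Z\in\frakh_v$ produces, via $\operatorname{Ad}(n')Z$, components in $\frakn'$ of size $O(\epsilon)$ hitting derivatives of $\rho_\epsilon$ of size $O(\epsilon^{-1})$ each, so these cross terms are of the \emph{same} order as the main term rather than lower order; this does not affect the conclusion, since they still obey the bound $\epsilon^{-C_l}$ with $C_l$ linear in $l$.
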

\begin{proof}
	Again, the claim is almost identical to \cite[Prop. 11.7]{Benoist_Oh}.
	The difference is that their $ W_\epsilon $ is our $ N'_\epsilon $, and their $ H_S $ is our $ H_v $, but $ H_S $ is affine symmetric whereas $ H_v $ is a semidirect product of a semisimple part and a unipotent part.
	This, however, does not affect the proof.
	We only recall the main steps.

	The first step, \cite[Lemma 11.8]{Benoist_Oh}, is to build local bump functions $ \beta_\epsilon $ on $ H $ which are supported in a neighborhood of the origin $ H_\epsilon $ and have the expected Sobolev norm bounds.
	Additionally, $ \beta_\epsilon \geq 1$ on $ H_{\epsilon^2} $.
	This construction is by rescaling a fixed smooth bump function.

	Next, one chooses a maximal collection of points $ y_i\in Y_\epsilon $ such that the balls $ B(y_i,\epsilon^3)\subset Y $ are disjoint.
	Since the volume of $ Y_\epsilon $ is bounded by $ 1 $, there are at most $O(\epsilon^{-3\dim Y} )$ such points.
	Next, one places copies of the bump functions $ \beta_\epsilon $ centered at the $ y_i $, denoted $ \beta_{y_i,\epsilon} $
	The desired cutoff function is then defined by
	\begin{align*}
	\tau_\epsilon := \frac{\sum_{y_j \in \cF} \beta_{y_j,\epsilon} }{\sum_{y_i \in \cG} \beta_{y_i,\epsilon} }
	\end{align*}
	where $ \cF, \cG $ denote the collection of $ y_i$ which are contained in $ Y_{4\epsilon} $ and $ Y_{2\epsilon} $ respectively.

	The function $ \rho_\epsilon $ is defined as a rescaling of a fixed bump function on $ N' $.
	The estimates for the Sobolev norms are checked directly.
\end{proof}

A key result introduced in \cite{EskinMcMullen} is the wavefront lemma.
In the adaptation needed for our result, it says that points near $ Y $, when translated by $ a_t $, stay near the $ a_t $-translate of $ Y $.
We will state it below for comparisons of integrals of translates of functions.

Since $ G $ acts on $ X $ on the right, define the translation action of $ G $ on functions $ w:X\to \bR $ by
\begin{align}
\label{eqn:G_action_fct}
(w\cdot g)(x) := w(x\cdot g^{-1}).
\end{align}

\begin{lemma}[Wavefront Lemma]
	\label{lem:wavefront}
	Let $ w:X\to \bR $ be a smooth compactly supported function.
	Let $ \phi_\epsilon $ be the function constructed in \autoref{prop:Y_thickening}.
	Then
	\begin{multline*}
	\int_X w \cdot (\phi_\epsilon \cdot a_t) \, d\mu_X
	= \int_Y w(ya_t) d\mu_{Y}(y) + \\
	+ O(\epsilon\cdot \Lip(w)) + O(\epsilon^{p_1}\cdot \norm{w}_{L^\infty})
	\end{multline*}
	where $ \Lip(w) $ denotes the Lipschitz constant of $ w $ and $ \norm{w}_{L^\infty} $ denotes its supremum.
\end{lemma}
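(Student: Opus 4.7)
The plan is to unfold the left-hand side via a change of variables, invoke the local product structure of $\phi_\epsilon$ around $Y$ from \autoref{prop:loc_prod_struct} and \autoref{prop:Y_thickening}, apply a wavefront estimate in a right-$G$-invariant metric, and then remove the cutoff $\tau_\epsilon$ using the injectivity-radius bound of \autoref{prop:inj_radius_volume_bound}. First, using right-$G$-invariance of $\mu_X$ together with $(\phi_\epsilon\cdot a_t)(x)=\phi_\epsilon(x a_t^{-1})$, the substitution $x=za_t$ gives
\[
\int_X w\cdot (\phi_\epsilon\cdot a_t)\,d\mu_X \;=\; \int_X w(za_t)\,\phi_\epsilon(z)\,d\mu_X(z).
\]
The support of $\phi_\epsilon$ lies in the injective image of $Y_\epsilon\times N'_\epsilon\to X$, and by the splitting $\frakg=\frakh_v\oplus\frakn'$ the measure $d\mu_X$ factors as $d\mu_Y\cdot d\mu_{N'}$ with Jacobian equal to $1$ at the identity, hence $1+O(\epsilon)$ on the support. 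Using $\phi_\epsilon(yn')=\tau_\epsilon(y)\rho_\epsilon(n')$, the above equals
\[
\iint_{Y_\epsilon\times N'_\epsilon} w(yn'a_t)\,\tau_\epsilon(y)\rho_\epsilon(n')\,d\mu_Y(y)\,d\mu_{N'}(n') + O\bigl(\epsilon\,\|w\|_{L^\infty}\bigr).
\]

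The wavefront estimate is the heart of the argument. Fix on $G$ a right-$G$-invariant Riemannian metric, which descends to a metric on $X$ for which the quotient map is $1$-Lipschitz. Then for any $y\in Y$ and $n'\in N'_\epsilon$,
\[
d_X(yn'a_t,\,ya_t) \;=\; d_X(yn',\,y) \;\leq\; d_G(n',e) \;\leq\; \epsilon,
\]
uniformly in $t$: the first equality uses right-$a_t$-invariance and the last inequality uses that $N'_\epsilon$ is the $\epsilon$-ball in the right-invariant metric. Consequently, pointwise on the support of $\phi_\epsilon$,
\[
|w(yn'a_t)-w(ya_t)| \;\leq\; \Lip(w)\cdot\epsilon.
\]
Integrating against $\tau_\epsilon\rho_\epsilon$, using $\int_{N'_\epsilon}\rho_\epsilon\,d\mu_{N'}=1$ and $\mu_Y(\operatorname{supp}\tau_\epsilon)\leq\mu_Y(Y)<\infty$, collapses the double integral to $\int_Y \tau_\epsilon(y)\,w(ya_t)\,d\mu_Y(y) + O(\Lip(w)\cdot\epsilon)$.

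Finally, to remove $\tau_\epsilon$ observe that $1-\tau_\epsilon$ is supported in $Y\setminus Y_{4\epsilon}$ and bounded by $1$, so
\[
\left|\int_Y (1-\tau_\epsilon)(y)\,w(ya_t)\,d\mu_Y\right| \;\leq\; \|w\|_{L^\infty}\cdot \mu_Y(Y\setminus Y_{4\epsilon}) \;=\; O\bigl(\epsilon^{p_1}\|w\|_{L^\infty}\bigr)
\]
for some $p_1>0$, by \autoref{prop:inj_radius_volume_bound}. Combining the two error terms yields the claim.

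The main obstacle is the wavefront step: one must measure distances in a right-$G$-invariant metric, because the naive approach of writing $n'a_t=a_t\cdot(a_t^{-1}n'a_t)$ expands the $\frakn^-$-component of $n'$ by $e^t$, which would cause the error to blow up as $t\to\infty$. The product ansatz $\phi_\epsilon=\tau_\epsilon\cdot\rho_\epsilon$ of \autoref{prop:Y_thickening} was arranged precisely so that the $\epsilon$-thickening lies in the transverse direction $\frakn'$, orthogonal to the dynamics, so that the Lipschitz loss is only $\epsilon\cdot\Lip(w)$ rather than something growing with $t$. The remaining bookkeeping---the $1+O(\epsilon)$ Jacobian of the local product and the trimming of the cusp---is controlled by the preceding propositions and absorbed into the stated error terms.
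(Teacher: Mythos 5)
Your outer bookkeeping (unfolding the integral over the support of $\phi_\epsilon$, the product form $\phi_\epsilon(yn')=\tau_\epsilon(y)\rho_\epsilon(n')$, and removing the cutoff $\tau_\epsilon$ with the cusp-volume bound of \autoref{prop:inj_radius_volume_bound}) matches the paper, but the central wavefront estimate is not justified. You bound $d_X(yn'a_t,ya_t)\leq \epsilon$ uniformly in $t$ by appealing to ``a right-$G$-invariant Riemannian metric which descends to $X$''. No such metric descends: $X=\leftquot{\Gamma}{G}$ is a quotient by the \emph{left} $\Gamma$-action, so a metric on $G$ descends only if it is left-$\Gamma$-invariant; and your chain $d(yn'a_t,ya_t)=d(yn',y)\leq d_G(n',e)$ uses right-invariance for the first step and left-invariance for the second, i.e.\ a bi-invariant Riemannian metric, which does not exist on the noncompact semisimple group $G=\Orthog(\Lambda_\bR)$. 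Moreover $\Lip(w)$ in the statement refers to the fixed metric on $X$ underlying the Sobolev norms of \autoref{ssec:spectral_th}, and in that metric right translation by $a_t$ distorts distances unboundedly. A purely formal invariance argument cannot work here: it would apply verbatim if the transverse direction were $\frakn^+$ instead of $\frakn^-$, in which case the displacement pulled across $a_t$ is expanded by $e^{t}$ and the claimed uniform estimate is false; any correct argument must use on which side of the flow the transverse directions lie.

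In fact the ``naive approach'' you dismiss is exactly the paper's proof, and your sign is backwards. Writing a point of the support as $x=y\,na\,a_t$ with $n\in\exp(\frakn^-_\epsilon)$ and $a\in\exp(\fraka_\epsilon)$, one has $y\,na\,a_t=y\,a_t\,(a_{-t}na_t)\,a$ since $a$ commutes with $a_t$, and $a_{-t}na_t$ is exponentially \emph{contracted} for $t>0$: it is $\frakn^+\subset\frakh_v$ that this conjugation expands (compare \autoref{prop:msr_scaling}, where the same conjugation expands the unipotent part of $\frakh_v$, with total Jacobian $e^{20t}$), while the transverse piece $\frakn^-$ is contracted and $\fraka$ is fixed. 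Hence $|w(y\,na\,a_t)-w(ya_t)|=O(\epsilon\cdot\Lip(w))$ uniformly in $t\geq 0$, which is the first error term, and the cusp cutoff gives the $O(\epsilon^{p_1}\norm{w}_{L^\infty})$ term exactly as in your final step. With the metric-invariance step replaced by this conjugation computation, your argument becomes the paper's proof.
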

\begin{proof}
Recall that by construction, the support of $ \phi_\epsilon $ is in $ Y_\epsilon\times \frakn'_\epsilon $, and therefore the support of $ \phi_\epsilon \cdot a_t $ is in the translation on the right by $ a_t $ of this set (see \autoref{eqn:G_action_fct}).

We have for $ x = y n a \cdot a_t $ that
\begin{align*}
(\phi_\epsilon\cdot a_t) (x ) = \phi_\epsilon(y n a a_t \cdot a_{-t}) = \phi_\epsilon(yna) = \tau_\epsilon(y)\cdot \rho_\epsilon(na)
\end{align*}
where we have used the definition of $ \phi_\epsilon $ from \autoref{eqn:phi_epsilon_def}.
Similarly, for $ w(x) $ we have
\begin{align*}
w(x) = w(y\cdot n a \cdot a_t) = w(y a_t \cdot a_{-t}na_t\cdot a ) = w(y a_t) + O(\epsilon \cdot \Lip(w))
\end{align*}
where $ \Lip(w) $ denotes the Lipschitz norm of $ w $, and we used the exponential contraction of the $ a_t $-action on $ \frakn^- $ and that $ \norm{a} =  O(\epsilon)$.

We can now estimate the integral of interest as follows:
\begin{multline}
\int_X w\cdot (\phi_\epsilon \cdot a_t) d\mu_X = \int\displaylimits_{(Y_\epsilon\times \frakn'_\epsilon)\cdot a_t}^{} w(x) (\phi_\epsilon \cdot a_t)(x) d\mu_X(x) \\
= \int_{Y_\epsilon} \left[\int_{\frakn'_\epsilon} w(ya_t) \tau_\epsilon(y)\rho_\epsilon(na) dn \, da \right] d\mu_{Y} + O(\epsilon\cdot \Lip(w))\\
= \int_Y w(ya_t) d\mu_{Y} + O(\epsilon\cdot \Lip(w)) + O(\epsilon^{p_1}\cdot \norm{w}_{L^\infty})
\end{multline}
where for the last line we used the estimate for the volume of $ Y\setminus Y_\epsilon $ from \autoref{prop:inj_radius_volume_bound}.

For sufficiently large $ l $, the Sobolev norm $ \norm{w}_l $ controls both $ \Lip(w) $ and $ \norm{w}_{L^\infty} $ (since $ w $ has compact support).
Therefore, the desired estimate follows.
\end{proof}

\subsection{Spectral theory}
\label{ssec:spectral_th}

This section recalls the definition of Sobolev spaces for non-compact quotients of semisimple Lie groups.
It also contains the necessary quantitative mixing result from the spectral theory of semisimple Lie groups.

\subsubsection{Setup}
Let $ G $ be a semisimple real Lie group with a lattice $ \Gamma\subseteq G $.
Denote by $ \Lie G $ the Lie algebra of $ G $, canonically identified with the left-invariant vector fields on $ G $.
Then the universal enveloping algebra $ \cU(\Lie G) $ can be identified with the left-invariant differential operators on $ G $.

Consider the quotient $ X:=\leftquot{\Gamma}{G} $ equipped with the Haar measure $ \mu_X $.
Because $ \cU(\Lie G) $ denotes left-invariant operators on $ G $, it descends to an algebra of differential operators on $ X $, denoted $ \cU $.

\subsubsection{Sobolev spaces}
\label{sssec:sobolev_spaces}
Fix a basis $ D_i $ of $ \Lie G $ and using the Poincar\'{e}--Birkhoff--Witt theorem, a basis $ D_{i_1,\ldots, i_k} $ of the universal enveloping algebra $ \cU(\Lie G) $.
For a smooth function $ \alpha \in C^\infty (X) $ define the $ l $-th order $ L^2 $ Sobolev norm by
\begin{align}
\label{eqn:sobolev_def}
	\norm{\alpha}_{l}^2 := \sum_{0\leq k\leq l} \norm{D_{i_1\ldots i_k}\alpha}_{L^2}^2.
\end{align}
Here $ \norm{-}_{L^2} $ denotes the $ L^2 $ norm on the space $ L^2(X,\mu_X) $, and the summation is over a basis of operators of order $ \leq l $.

\begin{remark}
	\label{rmk:sobolev_casimir}
	There is an alternative way to define a Sobolev norm, which is comparable to the one in \autoref{eqn:sobolev_def} up to constants which only depend on the Lie group.
	For this, fix a maximal compact $ K\subset G $ and an associated Casimir operator $ \Omega \in \cU(\Lie G) $ which can be viewed as a Laplacian.
	Then up to multiplicative constants we have
	\begin{align}
	\norm{\alpha}_{{2l}}^2 \approx \norm{\alpha}_{L^2}^2 + \norm{\Omega^l \alpha}^2_{L^2}.
  \end{align}
\end{remark}

\subsubsection{Quantitative mixing}
\label{sssec:quant_mixing}
The right action of $G$ on functions is defined, for $g\in G$ and $\alpha$ a function on $X$, using right push-forward.
The formula becomes $(\alpha\cdot g) (x) := \alpha(x\cdot g^{-1})$.

The next result is established, for instance, in \cite[Cor. 3.5]{Kleinbock_Margulis}.
It applies in our context with $ G $ and $ \Gamma $ as above (note that the center of $ G $ is finite and contained in $ \Gamma $, so we can just quotient by the center from the start).
\begin{theorem}
\label{thm:quantitative_mixing}
	Let \(G\) be a connected semisimple center-free Lie group without compact factors, and \(\Gamma\subset G\) an irreducible non-uniform lattice.
	Set \(X:=\leftquot{\Gamma}{G} \) with normalized Haar probability measure $\mu$.
	There exist constants $ A, l, \delta >0$ depending only on $ G $ and $ \Gamma $ such that for all $ \alpha, \beta \in C^2_\infty (X) $ and $ g\in G $ we have
	\begin{align}
	\norm{ \left(\int_X \! \alpha \cdot \left(\beta\cdot g\right)  \, d\mu \right) - \int_X \alpha \cdot \beta\, d\mu }
	\leq A \cdot \norm{\alpha}_l \cdot \norm {\beta}_l \cdot e^{-\delta \cdot \norm {g} }
	\end{align}
	Here $ C^2_\infty(X) $ denotes the space of smooth functions $ \alpha $ on $ X $ with $ D\alpha \in L^2(X,\mu) $, for $ D $ any differential operator in the universal enveloping algebra of $ G $.
	Therefore a function $\alpha \in C^2_\infty(X) $ is in the intersection of all Sobolev spaces on $ X $.
	The Sobolev norms are denoted $ \norm{-}_l $.
\end{theorem}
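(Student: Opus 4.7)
The plan is to reduce the bilinear estimate to the decay of matrix coefficients in the unitary representation of $G$ on $L^2(X,\mu)$. First I would split $\alpha = \bar \alpha + \alpha_0$ and $\beta = \bar \beta + \beta_0$, where $\bar\alpha = \int_X \alpha\, d\mu$, $\bar\beta = \int_X \beta\, d\mu$, and $\alpha_0, \beta_0$ lie in the mean-zero subspace $L^2_0(X):= L^2(X)\ominus \bC$. Expanding $\int_X \alpha \cdot (\beta\cdot g)\, d\mu$ and subtracting $\bar\alpha\,\bar\beta$ (which equals $\int\alpha\cdot\int\beta$ since $\mu$ is a probability measure) leaves precisely the matrix coefficient $\langle \beta_0 \cdot g, \alpha_0\rangle_{L^2_0(X)}$. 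The problem is then to bound this in terms of Sobolev norms of $\alpha,\beta$ and the Cartan projection $\norm{g}$.

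The core analytic input is a quantitative Howe--Moore type decay estimate. Because $\Gamma$ is an irreducible non-uniform lattice in a connected, center-free semisimple $G$ with no compact factors, the representation of $G$ on $L^2_0(X)$ has a uniform spectral gap: in the higher rank case this follows from Kazhdan's property (T), and in rank one from the existence of a positive lower bound on the non-zero spectrum of the Laplacian (Selberg for arithmetic lattices, or more generally from non-triviality of the first automorphic spectrum). Equivalently, $L^2_0(X)$ is strongly $L^p$ for some finite $p$ depending only on $G,\Gamma$, so by Cowling's bounds (with refinements due to Oh, Shalom, and others) matrix coefficients of $K$-finite unit vectors satisfy $|\langle \pi(g) v, w\rangle| \leq C\, (\dim K\cdot v)^{1/2}(\dim K\cdot w)^{1/2}\, \Xi(g)^{2/p}$, where $\Xi$ is the Harish-Chandra function; in particular they decay as $e^{-\delta_0\norm{g}}$ for an explicit $\delta_0 = \delta_0(G,\Gamma) > 0$.

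To pass from $K$-finite vectors to general smooth vectors, I would use a Sobolev interpolation argument relative to the Casimir operator $\Omega$ from Remark \ref{rmk:sobolev_casimir}. Decompose $\alpha_0 = \sum_\lambda \alpha_\lambda$ along the spectral decomposition of $\Omega$ (or along $K$-types), and split at a cutoff $N$: the contribution from eigenvalues $\leq N$ consists of bounded-dimension $K$-type vectors whose matrix coefficient obeys the exponential decay above, with a constant growing polynomially in $N$; the tail beyond $N$ is bounded in $L^2$ by $N^{-l}\norm{\Omega^l \alpha_0}_{L^2}\lesssim N^{-l}\norm{\alpha}_{2l}$, and similarly for $\beta$. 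Applying Cauchy--Schwarz to the tail and optimizing the cutoff $N$ as a suitable power of $e^{\norm{g}}$ balances polynomial growth against exponential decay, yielding $|\langle \beta_0\cdot g,\alpha_0\rangle| \leq A \norm{\alpha}_l \norm{\beta}_l e^{-\delta\norm{g}}$ for some (typically smaller) $\delta>0$ and some $l$ depending only on $G$.

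The main obstacle is the quantitative spectral gap itself: one needs an explicit positive $\delta_0$ for the decay of $K$-finite matrix coefficients on $L^2_0(X)$, uniform across $g\in G$ and usable against the full space of smooth vectors. This is where the structural hypotheses on $\Gamma$ genuinely enter (irreducibility, no compact factors, arithmeticity in the higher rank case for property (T), or Selberg-type bounds in rank one), and where most of the real work lies in Kleinbock--Margulis and related references. By contrast, the $K$-type truncation and Sobolev interpolation described above are formal and essentially independent of the particular $G$, so once the spectral gap is in hand the rest of the argument assembles without further subtlety.
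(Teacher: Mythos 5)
The paper does not actually prove this theorem: it is quoted as an external input, with the proof deferred to \cite[Cor.~3.5]{Kleinbock_Margulis} (the only remark made in the text is that one may pass to the center-free quotient since the center of $G$ is finite and contained in $\Gamma$). Your sketch is, in outline, exactly the standard argument behind that citation (going back to Cowling--Haagerup--Howe and Katok--Spatzier, and carried out in the appendix/Section~3 of Kleinbock--Margulis): subtract the means to reduce to a matrix coefficient on $L^2_0(X)$, invoke a uniform spectral gap so that $L^2_0(X)$ is strongly $L^p$, apply the pointwise bound on $K$-finite matrix coefficients in terms of the Harish-Chandra function, and then pass to general smooth vectors by a $K$-type (Casimir) truncation balanced against the Sobolev norms of \autoref{rmk:sobolev_casimir}. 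So there is no gap in strategy; the one place where your attributions are loose is the source of the spectral gap itself: property (T) is a property of $G$ and fails whenever $G$ has a simple factor locally isomorphic to $\mathrm{SO}(n,1)$ or $\mathrm{SU}(n,1)$, even if the total real rank is large, so ``higher rank $\Rightarrow$ (T)'' is not quite right. For the general statement one needs, as in Kleinbock--Margulis, the gap for $L^2_0(\leftquot{\Gamma}{G})$ coming from property (T) of the factors that have it, together with the automorphic-spectrum input (finiteness of small eigenvalues, Burger--Sarnak/Clozel-type bounds) for irreducible non-uniform lattices when rank-one factors are present --- which is precisely where the hypotheses ``irreducible'' and ``non-uniform'' are used, and which you correctly identify as the real content. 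With that caveat, your proposal reconstructs the proof of the cited result rather than conflicting with the paper, which simply black-boxes it.
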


The key point of the above theorem is that the constants $ A, l $ as well as the spectral gap $ \delta $ do not depend on the functions $ \alpha, \beta $.

\subsection{Quantitative Equidistribution}

We keep the notation from the previous two sections.
The next result establishes in a quantitative form the needed equidistribution of $ Y $-translates under the right action of $ a_t $.

\begin{theorem}
	\label{thm:quantitative_equidist}
	There exist constants $l,\delta>0$ such that for all $w\colon X\to \bR$ smooth, compactly supported, we have as $t\to \infty$:
 \begin{align*}
  \int\displaylimits_{Y \cdot a_t}\! w \, d\mu_{Y a_t} = \frac{\Vol Y}{\Vol X}\cdot \left(\int\displaylimits_{X}\! w\, d\mu_X\right) + O\left(\norm{w}_l \cdot e^{-t\cdot \delta}\right).
 \end{align*}
 Here $\norm{w}_l$ denotes the $l$-th order Sobolev norm, and the implied constants in $O(-)$ are absolute.
 The measure $d\mu_{Ya_t}$ has volume independent of $t$ and is defined in \autoref{sssec:cont_vol_msr}.
\end{theorem}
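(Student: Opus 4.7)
The plan is to combine the thickening construction of $\phi_\epsilon$ from \autoref{prop:Y_thickening}, the wavefront \autoref{lem:wavefront}, and the quantitative mixing \autoref{thm:quantitative_mixing}, balancing the resulting errors by an optimal choice of $\epsilon$ as a power of $e^{-t}$. This follows the template of Eskin--McMullen and Benoist--Oh, with the homogeneous subspace $Y$ playing the role of the expanding horocycle in \autoref{fig:horocycles}.

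First I would apply \autoref{lem:wavefront} in the form
\[
  \int_Y w(y a_t)\, d\mu_Y(y) \;=\; \int_X w\cdot (\phi_\epsilon \cdot a_t)\, d\mu_X \;+\; O\!\left(\epsilon\cdot \Lip(w)\right) + O\!\left(\epsilon^{p_1}\cdot \norm{w}_{L^\infty}\right),
\]
noting that the left-hand side equals $\int_{Ya_t} w\, d\mu_{Ya_t}$ by definition of the constant-volume measure from \autoref{sssec:cont_vol_msr}. Thus the problem is reduced to estimating the integral of $w$ against the $a_t$-translate of the fixed function $\phi_\epsilon$ on $X$.

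Next I would invoke \autoref{thm:quantitative_mixing} with $\alpha = w$ and $\beta = \phi_\epsilon$, and $g = a_t$ (so that $\norm{g}\asymp t$), to obtain
\[
  \int_X w\cdot (\phi_\epsilon\cdot a_t)\, d\mu_X \;=\; \frac{1}{\Vol X}\left(\int_X w\, d\mu_X\right)\left(\int_X \phi_\epsilon\, d\mu_X\right) + O\!\left(\norm{w}_l\cdot \norm{\phi_\epsilon}_l \cdot e^{-\delta t}\right).
\]
The mass $\int_X \phi_\epsilon\, d\mu_X$ needs to be identified with $\Vol Y$ up to an admissible error. Using the local product structure of \autoref{prop:loc_prod_struct}, the integral factors as $\int_{Y_\epsilon}\tau_\epsilon\, d\mu_Y \cdot \int_{N'_\epsilon}\rho_\epsilon$; the second factor equals $1$ by normalization and the first equals $\Vol Y - O(\epsilon^p)$ by \autoref{prop:inj_radius_volume_bound}, since $\tau_\epsilon \le 1$ and agrees with $1$ on $Y_{4\epsilon}$. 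The Sobolev norm bound $\norm{\phi_\epsilon}_l \lesssim \epsilon^{-C_l}$ from \autoref{eqn:phi_epsilon_sobolev_bound} then yields the total estimate
\[
  \int_{Ya_t} w\, d\mu_{Ya_t} = \frac{\Vol Y}{\Vol X}\int_X w\, d\mu_X + O\!\left(\epsilon^{p'}\norm{w}_{C^1}\right) + O\!\left(\norm{w}_l\cdot \epsilon^{-C_l}\cdot e^{-\delta t}\right),
\]
where $p' = \min(1,p_1,p)$.

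The main obstacle, and the only remaining step, is to balance the two error terms. Setting $\epsilon := e^{-\delta_1 t}$ for a sufficiently small $\delta_1 > 0$ (concretely $\delta_1 = \delta/(2(C_l + p' + 1))$) makes both $\epsilon^{p'}$ and $\epsilon^{-C_l} e^{-\delta t}$ bounded by $e^{-\delta' t}$ for some uniform $\delta' > 0$, which gives the claimed exponent. I do not expect any serious difficulty in this optimization; the delicate inputs, namely the cuspidal volume bound \autoref{prop:inj_radius_volume_bound} and the thickening construction with controlled Sobolev norms, have already been established, and the mixing statement \autoref{thm:quantitative_mixing} applies verbatim because the ambient group $G=\Orthog(\Lambda_\bR)$ is semisimple without compact factors and $\Gamma=\Orthog(\Lambda_\bZ)$ is an irreducible non-uniform lattice in it.
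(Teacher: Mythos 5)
Your proposal is correct and follows essentially the same route as the paper's proof: thicken $Y$ to $\phi_\epsilon$ via \autoref{prop:Y_thickening}, compare $\int_{Ya_t} w\, d\mu_{Ya_t}$ with $\int_X w\cdot(\phi_\epsilon\cdot a_t)\,d\mu_X$ via the wavefront \autoref{lem:wavefront}, apply the quantitative mixing \autoref{thm:quantitative_mixing} together with the mass identification $\int_X\phi_\epsilon\,d\mu_X=\Vol Y+O(\epsilon^{p_1})$ from \autoref{prop:inj_radius_volume_bound}, and balance errors by taking $\epsilon$ to be a small exponential in $t$. The only difference is cosmetic (you apply the wavefront comparison before mixing, the paper after), so nothing further is needed.
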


\begin{proof}
 Consider the function $\phi_\epsilon$ constructed in \autoref{prop:Y_thickening}, which serves as a thickening of $Y$.
 \autoref{thm:quantitative_mixing} gives $\delta'>0$ such that
 \begin{multline}
 \label{eqn:spectral_approx}
  \int_{X}\! w \cdot (\phi_\epsilon \cdot a_t)\, d\mu_X  = \\
  = \frac 1 {\Vol X} \left(\int_X \! w\, d\mu_X\right) \left(\int_X \! \phi_\epsilon\, d\mu_X \right) + O\left(\norm{w}_l \cdot \norm{\phi_\epsilon}_l \cdot e^{-t\cdot \delta'}\right)\\
  = \frac {\Vol Y} {\Vol X} \left(\int_X \! w\, d\mu_X\right) + O\left(\norm{w}_l \cdot \norm{\phi_\epsilon}_l \cdot e^{-t\cdot \delta'}\right)
 \end{multline}
 where we used that
 \begin{align*}
 \int_X \phi_\epsilon \, d \mu_X = \Vol Y + O(\Vol (Y\setminus Y_\epsilon)) = \Vol Y + O(\epsilon^{p_1})
 \end{align*}
 which follows from \autoref{prop:inj_radius_volume_bound}.
 Now \autoref{eqn:phi_epsilon_sobolev_bound} gives $ \norm{\phi_\epsilon}_l = \epsilon^{-c_l} $.
 Set $ \epsilon = e^{-t\cdot \delta''} $ for a sufficiently small $ \delta''>0 $.
 The claim will follow after we show that
 \begin{align*}
 \int_X w \cdot (\phi_\epsilon \cdot a_t) \, d\mu_X = \int_{Ya_t}^{} \! w \, d\mu_{Ya_t} + O\left(\norm{w}_l\cdot \epsilon^{c_1}\right)
 \end{align*}
 for some exponent $ c_1>0 $.

 But this last comparison follows from the \hyperref[lem:wavefront]{Wavefront Lem\-ma \ref{lem:wavefront}}.
 Indeed, both the Lipschitz norm $ \Lip(w) $ and the supremum $ \norm{w}_{L^\infty} $ are controlled by a sufficiently high Sobolev norm, since $ w $ is compactly supported.
\end{proof}

\subsubsection{Equidistribution of poles and equators}
\label{sssec:Funk_transform}
The result stated in \autoref{thm:main_slag_counting} of the introduction refers to special Lagrangian fibrations, and therefore to the equators with poles corresponding to elliptic fibrations.
While the count is the same up to a factor of $ 1/2 $, equidistribution of equators follows from equidistribution of their corresponding poles.
Let us see why.

Recall that the Funk transform $ \cF\colon L^2(\bS^2)\to L^2(\bS^2) $ takes a function to its integral over corresponding equators.
The image consists of all even functions, and the kernel are the odd functions (for the antipodal involution).
Because the transform commutes with orthogonal transformations, it acts by scalars on the space of harmonic polynomials.
Namely, the Funk transform on harmonic polynomials annihilates the odd degree ones, and acts in degree $ 2n $ by the scalar $ (-1)^n \frac{1\cdot 3\cdots (2n-1)}{2\cdot 4\cdots (2n)} $ (\cite{Funk}).
From this, (quantitative) equidistribution of equators becomes equivalent to (quantitative) equidistribution of their poles.

\bibliographystyle{sfilip}
\bibliography{counting_sLags.bib}

\providecommand{\bysame}{\leavevmode ---\ }
\providecommand{\andchar}{\&}
\begin{thebibliography}{{Tay}18}

\bibitem[BHC62]{Borel_Harish_Chandra}
\textsc{Borel,~A.{ \andchar\hskip 0.5em}Harish-Chandra} -- {``Arithmetic
  subgroups of algebraic groups''}. {\em Ann. of Math. (2)} {\bfseries 75}
  (1962) 485--535. \url{http://dx.doi.org/10.2307/1970210}.

\bibitem[BM17]{Bergeron_Matheus}
\textsc{{Bergeron},~N.{ \andchar\hskip 0.5em}{Matheus},~C.} -- {``{On special
  Lagrangian fibrations in generic twistor families of K3 surfaces}''}. {\em
  ArXiv e-prints} (Mar., 2017)  --
  \href{http://arxiv.org/abs/1703.01746}{{\ttfamily arXiv:1703.01746
  [math.DS]}}.

\bibitem[BO12]{Benoist_Oh}
\textsc{Benoist,~Y.{ \andchar\hskip 0.5em}Oh,~H.} -- {``{Effective
  equidistribution of {$S$}-integral points on symmetric varieties}''}. {\em
  Ann. Inst. Fourier (Grenoble)} {\bfseries 62} no.~5, (2012) 1889--1942.
  \url{http://dx.doi.org/10.5802/aif.2738}.

\bibitem[Bum13]{Bump}
\textsc{Bump,~D.} -- {\em Lie groups}, vol.~225 of {\em Graduate Texts in
  Mathematics}. Springer, New York -- second~ed., 2013.
  \url{http://dx.doi.org/10.1007/978-1-4614-8024-2}.

\bibitem[Can01]{Cantat}
\textsc{Cantat,~S.} -- {``{Dynamique des automorphismes des surfaces
  {$K3$}}''}. {\em Acta Math.} {\bfseries 187} no.~1, (2001) 1--57.
  \url{http://dx.doi.org/10.1007/BF02392831}.

\bibitem[Car72]{Carter}
\textsc{Carter,~R.~W.} -- {\em Simple groups of {L}ie type}. John Wiley \&
  Sons, London-New York-Sydney -- 1972. Pure and Applied Mathematics, Vol. 28.

\bibitem[DRS93]{DRS}
\textsc{Duke,~W., Rudnick,~Z.,{ \andchar\hskip 0.5em}Sarnak,~P.} -- {``{Density
  of integer points on affine homogeneous varieties}''}. {\em Duke Math. J.}
  {\bfseries 71} no.~1, (1993) 143--179.
  \url{http://dx.doi.org/10.1215/S0012-7094-93-07107-4}.

\bibitem[Eic52]{Eichler}
\textsc{Eichler,~M.} -- {\em {Quadratische {F}ormen und orthogonale
  {G}ruppen}}. {Die Grundlehren der mathematischen Wissenschaften in
  Einzeldarstellungen mit besonderer Ber{\"u}cksichtigung der
  Anwendungsgebiete. Band LXIII}. Springer-Verlag,
  Berlin-G{\"o}ttingen-Heidelberg -- 1952.

\bibitem[EKZ14]{EKZ_sum}
\textsc{Eskin,~A., Kontsevich,~M.,{ \andchar\hskip 0.5em}Zorich,~A.} -- {``{Sum
  of {L}yapunov exponents of the {H}odge bundle with respect to the
  {T}eichm{\"u}ller geodesic flow}''}. {\em Publ. Math. Inst. Hautes {\'E}tudes
  Sci.} {\bfseries 120} (2014) 207--333.
  \url{http://dx.doi.org/10.1007/s10240-013-0060-3}.

\bibitem[EM93]{EskinMcMullen}
\textsc{Eskin,~A.{ \andchar\hskip 0.5em}McMullen,~C.} -- {``{Mixing, counting,
  and equidistribution in {L}ie groups}''}. {\em Duke Math. J.} {\bfseries 71}
  no.~1, (1993) 181--209.
  \url{http://dx.doi.org/10.1215/S0012-7094-93-07108-6}.

\bibitem[EMM15]{EMM}
\textsc{Eskin,~A., Mirzakhani,~M.,{ \andchar\hskip 0.5em}Mohammadi,~A.} --
  {``{Isolation, equidistribution, and orbit closures for the {${\rm
  SL}(2,\mathbb{R})$} action on moduli space}''}. {\em Ann. of Math. (2)}
  {\bfseries 182} no.~2, (2015) 673--721.
  \url{http://dx.doi.org/10.4007/annals.2015.182.2.7}.

\bibitem[Fil18]{sfilip_K3}
\textsc{Filip,~S.} -- {``{F}amilies of {K}3 surfaces and {L}yapunov
  exponents''}. {\em Israel Journal of Mathematics} (Apr, 2018) .
  \url{https://doi.org/10.1007/s11856-018-1682-4}.

\bibitem[FT18a]{FilipTosatti_KummerRigidity}
\textsc{{Filip},~S.{ \andchar\hskip 0.5em}{Tosatti},~V.} -- {``{Kummer rigidity
  for K3 surface automorphisms via Ricci-flat metrics}''}. {\em ArXiv e-prints}
  (Aug., 2018)  -- \href{http://arxiv.org/abs/1808.08673}{{\ttfamily
  arXiv:1808.08673 [math.DS]}}.

\bibitem[FT18b]{FilipTosatti_SmoothRough}
\bysame , {``{Smooth and Rough Positive Currents}''}. {\em accepted, Ann. Inst.
  Fourier} (Sept., 2018) , \href{http://arxiv.org/abs/1709.05385}{{\ttfamily
  arXiv:1709.05385 [math.CV]}}.

\bibitem[Fun13]{Funk}
\textsc{Funk,~P.} -- {``{{\"U}ber {F}l{\"a}chen mit lauter geschlossenen
  geod{\"a}tischen {L}inien}''}. {\em Math. Ann.} {\bfseries 74} no.~2, (1913)
  278--300. \url{http://dx.doi.org/10.1007/BF01456044}.

\bibitem[GHJ03]{Joyce_Gross_Huybrechts}
\textsc{Gross,~M., Huybrechts,~D.,{ \andchar\hskip 0.5em}Joyce,~D.} -- {\em
  {Calabi-{Y}au manifolds and related geometries}}. {Universitext}.
  Springer-Verlag, Berlin -- 2003.
  \url{http://dx.doi.org/10.1007/978-3-642-19004-9}. Lectures from the Summer
  School held in Nordfjordeid, June 2001.

\bibitem[Gro03]{Gromov}
\textsc{Gromov,~M.} -- {``{On the entropy of holomorphic maps}''}. {\em
  Enseign. Math. (2)} {\bfseries 49} no.~3-4, (2003) 217--235.

\bibitem[Huy16]{Huybrechts_K3}
\textsc{Huybrechts,~D.} -- {\em Lectures on {K}3 surfaces}, vol.~158 of {\em
  Cambridge Studies in Advanced Mathematics}. Cambridge University Press,
  Cambridge -- 2016. \url{http://dx.doi.org/10.1017/CBO9781316594193}.

\bibitem[K3-85]{K3sem}
{\em {G{\'e}om{\'e}trie des surfaces {K3}: modules et p{\'e}riodes}}.
  Soci{\'e}t{\'e} Math{\'e}matique de France, Paris -- 1985. Papers from the
  seminar held in Palaiseau, October 1981--January 1982, Ast{\'e}risque No. 126
  (1985).

\bibitem[KM99]{Kleinbock_Margulis}
\textsc{Kleinbock,~D.~Y.{ \andchar\hskip 0.5em}Margulis,~G.~A.} --
  {``{Logarithm laws for flows on homogeneous spaces}''}. {\em Invent. Math.}
  {\bfseries 138} no.~3, (1999) 451--494.
  \url{http://dx.doi.org/10.1007/s002220050350}.

\bibitem[Kne67]{Kneser}
\textsc{Kneser,~M.} -- {``Semi-simple algebraic groups''}. in {\em Algebraic
  {N}umber {T}heory ({P}roc. {I}nstructional {C}onf., {B}righton, 1965)} --
  pp.~250--265. Thompson, Washington, D.C. -- 1967.

\bibitem[KS06]{Kontsevich_Soibelman}
\textsc{Kontsevich,~M.{ \andchar\hskip 0.5em}Soibelman,~Y.} -- {``{Affine
  structures and non-{A}rchimedean analytic spaces}''}. in {\em {The unity of
  mathematics}} -- vol.~244 of {\em {Progr. Math.}}, pp.~321--385.
  Birkh{\"a}user Boston, Boston, MA -- 2006.
  \url{http://dx.doi.org/10.1007/0-8176-4467-9_9}.

\bibitem[Mas88]{Masur_lower}
\textsc{Masur,~H.} -- {``{Lower bounds for the number of saddle connections and
  closed trajectories of a quadratic differential}''}. in {\em {Holomorphic
  functions and moduli, {V}ol.\ {I} ({B}erkeley, {CA}, 1986)}} -- vol.~10 of
  {\em {Math. Sci. Res. Inst. Publ.}}, pp.~215--228. Springer, New York --
  1988. \url{http://dx.doi.org/10.1007/978-1-4613-9602-4_20}.

\bibitem[McM02]{McMullenSiegel}
\textsc{McMullen,~C.~T.} -- {``{Dynamics on {$K3$} surfaces: {S}alem numbers
  and {S}iegel disks}''}. {\em J. Reine Angew. Math.} {\bfseries 545} (2002)
  201--233. \url{http://dx.doi.org/10.1515/crll.2002.036}.

\bibitem[PR94]{Platonov_Rapinchuk}
\textsc{Platonov,~V.{ \andchar\hskip 0.5em}Rapinchuk,~A.} -- {\em {Algebraic
  groups and number theory}}, vol.~139 of {\em {Pure and Applied Mathematics}}.
  Academic Press, Inc., Boston, MA -- 1994. Translated from the 1991 Russian
  original by Rachel Rowen.

\bibitem[{Tay}18]{Tayou_HodgeLoci}
\textsc{{Tayou},~S.} -- {``{On the equidistribution of some Hodge loci}''}.
  {\em ArXiv e-prints} (Jan., 2018)  --
  \href{http://arxiv.org/abs/1801.05751}{{\ttfamily arXiv:1801.05751
  [math.AG]}}.

\bibitem[Vee89]{Veech_Eisenstein}
\textsc{Veech,~W.~A.} -- {``{Teichm{\"u}ller curves in moduli space,
  {E}isenstein series and an application to triangular billiards}''}. {\em
  Invent. Math.} {\bfseries 97} no.~3, (1989) 553--583.
  \url{http://dx.doi.org/10.1007/BF01388890}.

\bibitem[Yau78]{Yau_Ricci}
\textsc{Yau,~S.~T.} -- {``{On the {R}icci curvature of a compact {K}{\"a}hler
  manifold and the complex {M}onge-{A}mp{\`e}re equation. {I}}''}. {\em Comm.
  Pure Appl. Math.} {\bfseries 31} no.~3, (1978) 339--411.
  \url{http://dx.doi.org/10.1002/cpa.3160310304}.

\end{thebibliography}
\end{document}